\documentclass[11pt,letterpaper]{amsart}

\usepackage[utf8]{inputenc}
\usepackage{amssymb, amscd, amsmath, epsfig, mathtools}
\usepackage{amsthm}
\usepackage{enumerate}
\usepackage{xcolor}
\usepackage{scalerel}
\usepackage{soul}
\usepackage{tikz-cd}
\usepackage{esint}

\usepackage[margin=1.0in]{geometry}

\newtheorem{theorem}{Theorem}[section]
\newtheorem*{theorem*}{Theorem}

\newtheorem*{corollary*}{Corollary}
\newtheorem{lemma}{Lemma}[section]
\newtheorem{proposition}{Proposition}[section]
\theoremstyle{definition}
\newtheorem{remark}{Remark}[section]

\newcommand{\R}{\mathbb R}

\newcommand{\calC}{\mathcal C}
\newcommand{\calL}{\mathcal L}
\newcommand{\dvol}{ d\text{Vol}_{g}}

\raggedbottom

\begin{document}

\title[A Local Method for Compact and Non-Compact Yamabe Problems]{A Local Method for Compact and Non-compact Yamabe Problems}
\author[J. Xu]{Jie Xu}
\address{
Department of Mathematics, Northeastern University, Boston, MA, U.S.A., 02115}
\email{jie.xu@northeastern.edu}

\date{}							

\begin{abstract} Let $ (M, g) $ be a compact manifold or a complete non-compact manifold without boundary, $ \dim M \geqslant 4 $, and not locally conformally flat. In this article, we introduce a new local method to resolve the Yamabe problem on compact manifold for dimensions at least $ 4 $, and the Yamabe problem on non-compact complete manifolds without boundary, which are pointwise conformal to subsets of some compact manifolds. In particular, the new local method applies to the hard cases--the Yamabe constants are positive. Our local method also generalizes Brezis and Nirenberg's nonlinear eigenvalue problem to subsets of manifolds. 
\end{abstract}

\maketitle

\noindent {\bf{2020 Mathematics Subject Classification}}: 58J05, 35J60, 53C18. \\
\noindent {\bf{Keywords}}: Yamabe Problem, Local Method, Yamabe Quotient, Local Test Function, Low Dimension.

\section{Introduction}
Let $ (M, g) $ be a compact Riemannian manifold (without boundary) or a complete non-compact manifold without boundary, $ \dim M \geqslant 3 $. A higher dimensional generalization of the classical uniformization theorem on compact Riemann surfaces is the Yamabe problem.
\medskip

{\bf The Yamabe Conjecture.} {\it Given a compact Riemannian manifold $ (M, g) $ of dimension $ n \geqslant 3 $, there exists a metric conformal to $ g $ with constant scalar curvature.}       
\medskip

 The conformal transformation of the original metric is of the form $ \tilde{g} = e^{2f} g $ for $ f \in \calC^{\infty}(M) $. Set $ e^{2f} = u^{p-2} $ with $ u >0, u \in \calC^{\infty}(M) $. Denote $\Delta_g = -d^*d$ to be negative definite Laplacian, and $ R_{g} $ the scalar curvature, the scalar curvature for the new metric $ \tilde{g} $ under conformal transformation is given by
\begin{equation}\label{intro:eqn1a}
R_{\tilde{g}} = u^{-\frac{n + 2}{n - 2}}\left(-4 \cdot \frac{n-1}{n-2} \Delta_{g} u + R_{g} u\right).
\end{equation}
\noindent 
Denote $ a = \frac{4(n - 1)}{n - 2} $ and $ p = \frac{2n}{n - 2} $ throughout this article, the new metric $ \tilde{g} = u^{p-2} g $ has constant scalar curvature $ \lambda $ if and only if the positive, smooth function $ u $ satisfies the Yamabe equation  
\begin{equation}\label{intro:eqn2a}
-a\Delta_{g} u + R_{g} u = \lambda u^{p-1} \; {\rm on} \; M.
\end{equation}
It was solved in several steps by important work of Yamabe, Trudinger, Aubin and Schoen, with the final cases solved in 1984. A thorough presentation of the historical development of the solution is in \cite{PL}. We say that a manifold is locally conformally flat if the Riemannian metric $ g $ is locally pointwise conformal to the Euclidean metric. Yamabe, Trudinger and Aubin \cite[Thm.~A]{PL} converted the PDE problem to an algebraic problem in terms of the Yamabe quotient and Yamabe constant. The Yamabe quotient on compact manifold or complete non-compact manifold is defined to be the functional
\begin{equation*}
Q(u) : = \frac{a\int_{M} \lvert \nabla_{g} u \rvert^{2} \dvol + \int_{M} R_{g} u^{2} \dvol}{\lVert u \rVert_{\calL^{p}(M, g)}^{2}}, \forall u \in H^{1}(M, g) \cap \calC_{c}^{\infty}(M).
\end{equation*}
Here $ \dvol $ is the volume form on $ (M, g) $. The Yamabe constant is
\begin{equation*}
\lambda(M) : = \inf_{u \in H^{1}(M, g) \cap \calC_{c}^{\infty}(M)} Q(u).
\end{equation*}
We also define the Yamabe invariant at infinity for non-compact $ M $ as
\begin{equation*}
\lambda_{\infty}(M) : = \lim_{i \rightarrow \infty} \lambda(M \backslash K_{i})
\end{equation*}
for any compact exhaustion $ \lbrace K_{i} \rbrace_{i \in \mathbb{N}} $ of $ M $. Each $  \lambda(M \backslash K_{i}) $ is the Yamabe constant of the non-compact manifold $ M \backslash K_{i} $. Kim \cite{Kim1}, \cite{Kim2} showed that this definition is independent of the choice of compact exhaustion.

In this article, we are interested in the hard cases for both compact and non-compact manifolds, i.e. those manifolds admit $ \lambda(M) > 0 $. We point out that our local method applies to the easy cases also. For compact manifolds with $ \lambda(M) \leqslant 0 $, we refer to \cite{PL}. There are also results various Yamabe-type problems for manifolds with boundary \cite{Brendle}, \cite{Escobar2}, \cite{ESC}, \cite{Marques} and non-compact manifolds \cite{Aviles-Mcowen}, \cite{Grosse}, \cite{PDEsymposium} with certain restrictions, etc. On non-compact manifolds with nonnegative scalar curvature, we need the notion of Yamabe invariant at infinity, see e.g. \cite{Kim1}, \cite{Kim2}, \cite{Wei}. 

When $ \dim M \geqslant 6 $ and $ M $ is not locally conformally flat, Aubin \cite[Thm.~B]{PL} picked up an appropriate local test function, supported in some small chart on which an appropriate local coordinates was chosen, such that $ \lambda(M) < \lambda(\mathbb{S}^{n}) $; therefore the Yamabe equation (\ref{intro:eqn2a}) admits a solution. Aubin's method is a local method. Note that the local method is associated with the Yamabe equation
\begin{equation*}
-\frac{4(n - 1)}{n - 2} \Delta_{g} u + R_{g} u = \lambda u^{\frac{n + 2}{n - 2}} \; {\rm in} \; \Omega, u = 0 \; {\rm on} \; \partial \Omega.
\end{equation*}
in some domain $ \Omega \subset \R^{n} $, equipped with some metric $ g $, and for some $ \lambda \in \R $. Precisely speaking, we show by local variational method that the existence of a positive, smooth solution of the PDE above in some small enough $ \Omega $ is followed from the existence of some local test function $ \phi $ such that $ Q(\phi) < \lambda(\mathbb{S}^{n}) $. The Green's function of the conformal Laplacian $ -\frac{4(n - 1)}{n - 2} \Delta_{g} u + R_{g} u $ and the critical Sobolev embedding characterize the ``blow-up" phenomena. 

When $ \dim M \leqslant 5 $ or $ M $ is locally conformally flat, Schoen \cite[Thm.~C]{PL} proved the Yamabe problem by positive mass theorem. Schoen's method is a global method. The local and global methods are exclusive. i.e local test function cannot be applied to resolve the low dimensional or local conformally flat cases, and vice versa.
\medskip

Our first main result, concerns the compact manifold $ (M, g) $, says
\begin{theorem}\label{intro:thm1}
The Yamabe problem on not-locally-conformally-flat compact manifolds $ (M, g) $ with $ \dim M \geqslant 4 $ and positive Yamabe invariant can be resolved by a local method.
\end{theorem}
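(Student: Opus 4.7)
The plan is to reduce the global Yamabe problem on $ (M,g) $ to a Dirichlet-type local problem on a small geodesic ball, and then to bootstrap back to the global minimization via extension by zero. Since $ M $ is not locally conformally flat, I would fix a point $ p \in M $ at which the Weyl tensor satisfies $ W(p) \neq 0 $, and work in conformal normal coordinates centered at $ p $, passing to a metric in the conformal class of $ g $ that kills $ R_{g}(p) $ and normalizes the low-order terms in the expansion of the volume form. On a small geodesic ball $ B_{\delta}(p) $ with this metric, the question becomes to establish the sharp strict inequality
\begin{equation*}
\lambda(B_{\delta}(p)) := \inf_{u \in H_{0}^{1}(B_{\delta}(p))} Q(u) < \lambda(\mathbb{S}^{n}).
\end{equation*}

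For $ n \geqslant 6 $ the classical Aubin construction with a truncated Aubin--Talenti bubble $ U_{\epsilon} $ already delivers this inequality through a negative correction of order $ \delta^{4}|W(p)|^{2} $, so the genuinely new content of the theorem lies in $ n = 4, 5 $. In these dimensions the Weyl contribution has the wrong order of smallness, and the classical proof invokes Schoen's positive mass theorem. Following the Brezis--Nirenberg philosophy advertised in the abstract, I would instead modify $ U_{\epsilon} $ by a corrector $ w_{\epsilon} $ obtained from a linear auxiliary problem for the conformal Laplacian on $ B_{\delta}(p) $, e.g.\ one built from the first Dirichlet eigenfunction, so as to exploit the zeroth-order term $ R_{g} u $ together with the positivity of $ \lambda(M) $. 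The expected mechanism is that the hypothesis $ \lambda(M) > 0 $ provides coercivity of the conformal Laplacian, and that a careful expansion of $ Q(U_{\epsilon} + w_{\epsilon}) $ then produces a negative correction at order $ \epsilon^{n-2} $ sufficient to pull the quotient below $ \lambda(\mathbb{S}^{n}) $, without any appeal to a global mass theorem.

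Once the strict local inequality $ \lambda(B_{\delta}(p)) < \lambda(\mathbb{S}^{n}) $ is in hand, extending any local test function $ \phi \in H_{0}^{1}(B_{\delta}(p)) $ realizing it by zero outside $ B_{\delta}(p) $ produces a global test function $ \Phi \in H^{1}(M,g) \cap \calC_{c}^{\infty}(M) $ with $ Q(\Phi) < \lambda(\mathbb{S}^{n}) $, hence $ \lambda(M) < \lambda(\mathbb{S}^{n}) $. From there the standard Aubin--Trudinger machinery applies: because the infimum lies below the sphere value, a minimizing sequence for $ Q $ on $ M $ is $ \calL^{p} $-precompact by concentration-compactness, and the resulting minimizer is smooth and positive by elliptic regularity and the maximum principle, solving (\ref{intro:eqn2a}). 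The main obstacle is the delicate low-dimensional test-function analysis: one needs precise expansions of every term in $ Q(U_{\epsilon} + w_{\epsilon}) $ at the critical order, the correct choice of corrector tied to the local conformal Laplacian in non-Euclidean geometry, and a sign argument that deploys $ \lambda(M) > 0 $ as the local substitute for the positive mass theorem.
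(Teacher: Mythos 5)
Your high-level skeleton (local strict inequality $\lambda(M) < \lambda(\mathbb{S}^n)$ followed by the standard Aubin--Trudinger machinery, Brezis--Nirenberg as the inspiration for a corrector in low dimension) is exactly right, but the way you propose to set up the local model runs contrary to the paper's actual mechanism, and at one spot the sign logic is inverted. You propose to pass to conformal normal coordinates so as to kill $R_g(p)$ and normalize $\det g$; the paper does the \emph{opposite}. Its whole point is to avoid conformal normal coordinates (and hence the Green's function of the conformal Laplacian) and instead use only ordinary geodesic normal coordinates, after first applying a conformal change (Proposition~\ref{GA:prop1}) to make $R_{\tilde g}(P) < 0$ strictly at the chosen non--Weyl-flat point $P$. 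This strict negativity of the scalar curvature on a small ball is precisely what substitutes for the Brezis--Nirenberg ``correction term'' $\gamma u$: in the local quotient the term $\int R_g u^2$ contributes a definite negative amount that beats the remainder terms in the Taylor expansion of the metric, even when $n = 4, 5$ (Lemma~\ref{per:lemma1} and Proposition~\ref{PDE:prop1}). If you instead normalize $R_g(p) = 0$, you have thrown away the only lever that drives the strict inequality in low dimension; you are then back to needing a global invariant (mass), which is what the paper is designed to avoid.

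Relatedly, the claim that $\lambda(M) > 0$ ``provides coercivity of the conformal Laplacian'' which in turn ``produces a negative correction at order $\epsilon^{n-2}$'' is backwards. Coercivity of the conformal Laplacian makes the Yamabe quotient \emph{larger}, not smaller, and the hypothesis $\lambda(M) > 0$ plays no quantitative role in the test-function estimate; it only identifies the case that is hard and where the conclusion is new. The paper's corrector is also more specific than ``one built from the first Dirichlet eigenfunction'': it is the solution $v$ of the auxiliary Dirichlet problem
$-a\Delta_g v - R_g v = a\Delta_g u + R_g u - R_g u^{1-\gamma}$
with $u$ the Aubin bubble and $\gamma$ a small fixed exponent, and the final test function is $\phi = u + v$ analyzed through a two-scale ($d$ and $\epsilon$) argument (\S3). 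So while your overall architecture and the $n\geqslant 6$ case are fine, the $n = 4, 5$ step as written would not close: you would need to replace the ``kill $R_g(p)$ via conformal normal coordinates'' step with ``make $R_g(P) < 0$ via a conformal change,'' and replace the vague eigenfunction corrector and the coercivity heuristic with the concrete super-local PDE and the negative-scalar-curvature estimate.
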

Looking into the exclusive local and global methods, we observe that there is a dimensional borderline--the dimension $ 6 $: the global methods were applied when the dimension is too low. There is also a geometric borderline-the vanishing/non-vanishing of the Weyl tensor: the global methods were applied when the Weyl tensor vanishes identically. The topological and geometrical obstructions of applying local methods agree in dimension $ 3 $, since the Weyl tensor is trivial in dimension $ 3 $. Theorem \ref{intro:thm1} says the global topological invariant-the dimension-is dominant in dimension $ 3 $. For manifolds with dimension at least $ 4 $, only the geometry classifies the local and global methods.

The new local method is different from Aubin's local method. Historically, Aubin's test function
\begin{equation*}
u_{\epsilon, \Omega}(x) = \frac{\varphi(x)}{\left( \epsilon + \lvert x \rvert^{2} \right)^{\frac{n-2}{2}}}
\end{equation*}
has been applied locally with conformal normal coordinates---a special local coordinates constructed by the Green's function of the conformal Laplacian. The choice of $ \epsilon $ depends on the size of $ \Omega $. We only require the standard local normal coordinates to apply our new local test function. In other words, we do not need the existence of the Green's function of the conformal Laplacian. The new test function is ``damping" the blow-up phenomena of the Green's function. Brezis and Nirenberg \cite{Niren3} introduced the ``correction term" to do the same thing in Euclidean geometry. It indicates the application of the local method to the Yamabe problem on non-compact manifold. Our second main result concerns the non-compact case, in particular the ``positively curved" spaces with relatively weak geometric hypotheses ``near infinity":
\begin{theorem}\label{intro:thm2}
Let $ (M, g) $ be a complete non-compact manifold (without boundary) of $ \dim M \geqslant 4 $ such that $ R_{g} > 0 $ somewhere. In addition we assume that $ (M, g) $ is pointwise conformal to some subdomain of some compact manifold. Fix a point $ O \in M $. Suppose that there exists a positive constant $ C $ such that $ R_{g} \geqslant - Cd(x)^{2} $ when $ d(x) : = d_{g}(x, O) $ is large. Suppose also that there exists a constant $ \rho_{0}(n, \lambda(M), \lambda_{\infty}(M)) > 0 $ such that, if $ \text{Vol}_{g}(B(O, r)) \leqslant C r^{n + \rho} $ for all large $ r $ and some $ \rho < \rho_{0} $. If $ (M, g) $ is not locally conformally flat, then there exists a metric, pointwise conformal to $ g $, admitting a positive constant scalar curvature, provided that $ \lambda(M) > 0 $.
\end{theorem}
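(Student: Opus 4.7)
The plan is to adapt the new local method established for Theorem \ref{intro:thm1} to the non-compact setting by combining it with a quantitative lower bound on $\lambda_\infty(M)$. Because $(M, g)$ is pointwise conformal to a subdomain $\Omega$ of some compact manifold $(N, \hat{g})$, I would regard the variational problem as taking place inside $\Omega$, so that the Sobolev embeddings on $N$ are available for functions compactly supported in $M$. Since $(M,g)$ is not locally conformally flat and $\dim M \geqslant 4$, there is a point $p \in M$ at which the Weyl tensor does not vanish, and $p$ lies in a precompact piece of $\Omega$. In standard normal coordinates centered at $p$, I would invoke the damping test function from the proof of Theorem \ref{intro:thm1}---an Aubin-type bubble modified by a Brezis--Nirenberg-style correction that bypasses the need for a Green's function of the conformal Laplacian---to produce $\phi \in \calC_c^\infty(M)$ with $Q(\phi) < \lambda(\mathbb{S}^n)$. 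This already yields $\lambda(M) \leqslant Q(\phi) < \lambda(\mathbb{S}^n)$.

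The second step is to establish the strict gap $\lambda(M) < \lambda_\infty(M)$. For any $u \in \calC_c^\infty(M \setminus K_i)$, the negative part of $\int_M R_g u^2 \dvol$ is controlled by $C \int d(x)^2 u^2 \dvol$, and combining this with the polynomial volume bound $\text{Vol}_g(B(O, r)) \leqslant C r^{n + \rho}$ via a weighted Sobolev/Hölder argument dominates it by a fixed multiple of $a \int_M |\nabla_g u|^2 \dvol + \|u\|_{\calL^p(M, g)}^2$, with a small multiplier once $\rho$ is small. Passing to $i \to \infty$ gives a lower bound on $\lambda_\infty(M)$, and by selecting $\rho_0 = \rho_0(n, \lambda(M), \lambda_\infty(M))$ sufficiently small I would force $\lambda_\infty(M) > \lambda(M)$.

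With $\lambda(M) < \min\{\lambda(\mathbb{S}^n), \lambda_\infty(M)\}$ in hand, I would then minimize $Q$ over $\{u \in H^1(M, g) \cap \calC_c^\infty(M) : \|u\|_{\calL^p(M, g)} = 1\}$. A minimizing sequence $\{u_k\}$ is $H^1$-bounded, and the two strict inequalities rule out, respectively, interior bubbling (whose blow-up cost is exactly $\lambda(\mathbb{S}^n)$) and escape of mass to infinity along the exhaustion $\{K_i\}$; a concentration-compactness argument then produces a nontrivial weak limit $u \geqslant 0$ that attains the infimum. Elliptic regularity and the strong maximum principle promote $u$ to a smooth positive function, and $u^{p-2} g$ solves \eqref{intro:eqn2a} with constant scalar curvature $\lambda(M) > 0$.

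The main obstacle is the second step: converting the rather weak decay hypotheses $R_g \geqslant - C d(x)^2$ and $\text{Vol}_g(B(O, r)) \leqslant C r^{n + \rho}$ into a sharp quantitative gap $\lambda(M) < \lambda_\infty(M)$, and pinning down the precise threshold $\rho_0$. The first step reduces to an application of the local test function machinery built for Theorem \ref{intro:thm1}, and the third step is standard concentration-compactness once both strict inequalities are available.
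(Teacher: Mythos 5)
Your step 1 is essentially the paper's: construct a conformal metric whose scalar curvature is negative near a point $P$ where the Weyl tensor is nontrivial (the paper does this via Proposition~\ref{NC:prop3}, a Neumann-boundary analogue of Proposition~\ref{GA:prop1}), then apply Proposition~\ref{PDE:prop1} to get $\lambda(M)\leqslant Q(\phi)<\lambda(\mathbb{S}^n)$. But your step 2 contains the gap, and you have correctly flagged it as the weak link. The decay hypotheses $R_g\geqslant -Cd(x)^2$ and $\mathrm{Vol}_g(B(O,r))\leqslant Cr^{n+\rho}$ cannot, by themselves, be converted into a quantitative lower bound on $\lambda_\infty(M)$ that beats $\lambda(M)$: the scalar curvature is allowed to grow quadratically negative at infinity, so there is no a priori coercivity in the functional on $M\setminus K_i$, and in any case the threshold $\rho_0$ in the hypothesis already depends on $\lambda_\infty(M)$, which makes the argument you sketch circular. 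These geometric conditions are not meant to produce the gap $\lambda(M)<\lambda_\infty(M)$; in the paper they are hypotheses of Wei's existence theorem (Theorem~\ref{NC:thm1}), which is what ultimately yields the minimizer once the gap is established by other means.

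The idea you are missing is the rigidity result of Kim (Proposition~\ref{NC:prop2}): if $(M,g)$ is pointwise conformal to a subdomain of a compact Riemannian manifold, then $\lambda_\infty(M)\geqslant\lambda(\mathbb{S}^n)$, and together with the general upper bound $\lambda_\infty(M)\leqslant\lambda(\mathbb{S}^n)$ from Proposition~\ref{NC:prop1} this forces $\lambda_\infty(M)=\lambda(\mathbb{S}^n)$. That is an algebraic consequence of the conformal-compactification hypothesis, requiring no weighted Sobolev analysis, and it immediately gives $\lambda(M)<\lambda(\mathbb{S}^n)=\lambda_\infty(M)$, i.e.\ condition~(\ref{NC:eqn4}). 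At that point the paper simply invokes Wei's Theorem~\ref{NC:thm1} rather than re-deriving a concentration-compactness argument; your step 3 is essentially redoing the content of that theorem, which is not wrong but is both heavier and redundant, and it is precisely where the curvature lower bound and volume growth hypotheses are actually consumed.
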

Again, the local method approach for non-compact manifolds applies in particular to the low dimensional cases as $ \dim M = 4 $ or $ 5 $. Previously, those cases are hard to resolve, since there were no local methods for manifolds with dimension less than $ 6 $, nor appropriate global methods, which usually require Ricci curvature lower bounds, existence of the Green's function of the conformal Laplacian and positive mass theorem.
\medskip

We would like to explain how the new local method reduces the lowest applicable dimension from $ 6 $ to $ 4 $ intuitively. Assume that the local region is some ball $ B_{0}(r) $ parametrized by normal coordinates. The nonnegative cut-off function $ \varphi(x) $ is usually a radial function and is equal to  $ 1 $ within $ B_{\frac{r}{2}}(0) $. Observe that
\begin{equation}\label{intro:eqn0a}
-\Delta_{g_{e}} \frac{1}{\left( \epsilon + \lvert x \rvert^{2} \right)^{\frac{n-2}{2}}} = \epsilon n (n - 2) \left( \frac{1}{\left( \epsilon + \lvert x \rvert^{2} \right)^{\frac{n-2}{2}}} \right)^{p - 1}.
\end{equation}
In the Yamabe quotient, the dominant term is the term $ \lVert \nabla_{g} u \Vert_{\calL^{2}(M, g)} $. Note that the cut-off function has no power when $ \frac{1}{\left( \epsilon + \lvert x \rvert^{2} \right)^{\frac{n-2}{2}}} $ is large near the center of the ball, therefore the term $ \lVert \nabla_{g} u_{0} \Vert_{\calL^{2}(M, g)} $ is almost the same as $ \lVert \nabla_{g} \left( \frac{1}{\left( \epsilon + \lvert x \rvert^{2} \right)^{\frac{n-2}{2}}} \right) \rVert_{\calL^{2}(M, g)} $. Due to the nonlinear eigenvalue equation (\ref{intro:eqn0a}), Aubin was really testing $ \nabla_{g} \left( \Delta_{g_{e}}  \frac{1}{\left( \epsilon + \lvert x \rvert^{2} \right)^{\frac{n-2}{2}}} \right)^{1 - p} $, i.e. he was estimating the third order derivative of the classical test function. Aubin's local method was able to control the third order derivative of the test function as low as dimension $ 6 $, therefore it is plausible to get some variation of the original test function $ \phi' $ and obtain $ Q(\phi') < \lambda(\mathbb{S}^{n}) $ up to dimension $ 4 $.

Another inspiration of our new local method comes from Brezis and Nirenberg \cite{Niren3}. With the Euclidean geometry, they gave remarkable results with respect to the existence of positive solutions of the nonlinear eigenvalue problem
\begin{equation*}
-\Delta_{e} u = \lambda u^{\frac{n + 2}{n - 2}} + \gamma u \; {\rm in} \; \Omega, u = 0 \; {\rm on} \; \Omega
\end{equation*}
on bounded region $ \Omega \subset \R^{n}, n \geqslant 3 $ for positive constants $ \lambda, \gamma $. Here $ -\Delta_{e} $ is the positive definite Euclidean Laplacian. The extra term $ \gamma u $ is applied to dampen the ``blow-up" phenomena characterized by the Green's function of the Laplacian and the critical Sobolev embedding.

It seems unrelated between the equation above and the Yamabe equation. Nevertheless, this equation has a missing geometric term: with necessary scalings of $ \lambda $ and $ \gamma $, we can rewrite the equation above as
\begin{equation}\label{intro:eqn0}
-\frac{4(n - 1)}{n - 2} \Delta_{e} u + R_{e} u = \lambda u^{\frac{n + 2}{n - 2}} + \gamma u \; {\rm in} \; \Omega, u = 0 \; {\rm on} \; \partial \Omega.
\end{equation}
The differential operator is thus the conformal Laplacian with trivial $ R_{e} \equiv 0 $. Therefore Brezis and Nirenberg were really solving a Yamabe-type equation by a local approach for locally conformally flat case. The necessity of this ``correction" term $ \gamma u $ is due to the Pohazaev identity. Hence the new equation (\ref{intro:eqn0}) indicates a possible extension to the curved space, or the manifold scenario. As key steps, our local analysis starts with the existence of positive, smooth solution of (\ref{intro:eqn0}) on subsets of compact Riemannian manifolds $ (M, g) $ accompanying with the correction term $ \gamma $. With non-Euclidean metric, $ R_{e} $ becomes $ R_{g} $, the scalar curvature of the metric $ g $; and $ \Delta_{e} $ becomes $ \Delta_{g} $, the Laplace-Beltrami operator. Throughout this article, we denote $ (\Omega, g) $ be a Riemannian domain if $ \Omega \subset \R^{n} $ is some open subset equipped with some Riemannian metric $ g $. Our first key proposition is given below:
\begin{proposition}\label{intro:prop1}
Let $ \Omega \subset \R^{n}, n \geqslant 4 $ equipped a Riemannian metric $ g $ such that the Weyl tensor of $ g $ does not vanish identically and $ R_{g} < 0 $ on $ \bar{\Omega} $. We assume that $ \Omega $ is small enough such that $ g $ has a global parametrization on it with standard normal coodrinates. For any positive constant $ \lambda $ and $ \gamma $, the Dirichlet problem (\ref{intro:eqn0c})
\begin{equation}\label{intro:eqn0c}
-\frac{4(n - 1)}{n - 2} \Delta_{g} u + R_{g} u = \lambda u^{\frac{n + 2}{n - 2}} + \gamma u \; {\rm in} \; (\Omega, g), u = 0 \; {\rm on} \; \partial \Omega.
\end{equation}
has a positive, smooth solution $ u \in \calC^{\infty}(\Omega) \cap H_{0}^{1}(\Omega, g) \cap \calC^{0}(\bar{\Omega}) $.
\end{proposition}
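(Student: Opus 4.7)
The plan is to solve Proposition \ref{intro:prop1} by a constrained variational argument of Brezis--Nirenberg type, adapted to the Riemannian setting by Aubin-type local test functions. Define
\begin{equation*}
J_{\gamma}(u) = a\int_{\Omega} \lvert \nabla_{g} u \rvert^{2} \dvol + \int_{\Omega} (R_{g} - \gamma) u^{2} \dvol
\end{equation*}
on $ H_{0}^{1}(\Omega, g) $, and set $ S_{\gamma} = \inf \lbrace J_{\gamma}(u) : u \in H_{0}^{1}(\Omega, g), \; \lVert u \rVert_{\calL^{p}(\Omega, g)} = 1 \rbrace $. Because $ R_{g} - \gamma < 0 $ on $ \bar{\Omega} $, I would first shrink $ \Omega $ if necessary so that the Faber--Krahn lower bound for the first Dirichlet eigenvalue of $ -a\Delta_{g} $ dominates $ \sup_{\bar{\Omega}}(\gamma - R_{g}) $; this ensures $ J_{\gamma} $ is coercive on $ H_{0}^{1}(\Omega, g) $ and that $ S_{\gamma} > 0 $.

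The heart of the argument is the strict inequality
\begin{equation*}
S_{\gamma} < \lambda(\mathbb{S}^{n}),
\end{equation*}
below which a minimizing sequence cannot bubble off at the critical Sobolev exponent. I would test $ J_{\gamma} $ against a truncated Aubin bubble
\begin{equation*}
u_{\epsilon}(x) = \varphi(x) \left( \epsilon + \lvert x \rvert^{2} \right)^{-\frac{n-2}{2}},
\end{equation*}
centered at a point $ x_{0} \in \Omega $ with $ W_{g}(x_{0}) \neq 0 $ (such a point exists by the non-vanishing hypothesis after possibly shrinking $ \Omega $ around it), where $ x $ denotes normal coordinates at $ x_{0} $ and $ \varphi $ is a radial cut-off. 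Expanding $ \sqrt{\det g} $ and $ g^{ij} $ in normal coordinates, the contribution to $ a\int \lvert \nabla u_{\epsilon} \rvert^{2} \dvol - \lambda(\mathbb{S}^{n}) (\int u_{\epsilon}^{p} \dvol)^{2/p} $ includes a positive Weyl-squared term of order $ \epsilon^{2} \lvert W_{g}(x_{0}) \rvert^{2} $ plus cut-off errors, which is insufficient to win in the borderline dimensions $ n = 4, 5 $. The new ingredient is $ -\gamma \int u_{\epsilon}^{2} \dvol $, whose normalized scaling is of order $ \epsilon $ for $ n \geqslant 5 $ and $ \epsilon \lvert \log \epsilon \rvert $ for $ n = 4 $; since $ \gamma > 0 $, taking $ \epsilon $ small enough forces the total quotient strictly below $ \lambda(\mathbb{S}^{n}) $ in every dimension $ n \geqslant 4 $.

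With $ S_{\gamma} < \lambda(\mathbb{S}^{n}) $ in hand, a standard Brezis--Nirenberg compactness argument (weak convergence of a minimizing sequence, passage to the limit on subcritical terms via Rellich, and exclusion of concentration through the sharp Sobolev constant by the strict gap) produces a nonnegative weak minimizer $ u \in H_{0}^{1}(\Omega, g) $. After a Lagrange-multiplier step and a scaling $ u \mapsto c u $ (permissible since $ p > 2 $) to normalize the multiplier to the prescribed value of $ \lambda $, the minimizer solves the equation (\ref{intro:eqn0c}) weakly. Elliptic bootstrapping then yields $ u \in \calC^{\infty}(\Omega) $, standard boundary regularity for the Dirichlet problem on a smooth subdomain yields $ u \in \calC^{0}(\bar{\Omega}) $, and the strong maximum principle applied to $ \lvert u \rvert $---itself a minimizer because $ \lvert \nabla \lvert u \rvert \rvert = \lvert \nabla u \rvert $ a.e.---promotes $ u \geqslant 0 $ to $ u > 0 $ in $ \Omega $.

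The main obstacle will be the strict test-function estimate in the borderline dimensions $ n = 4, 5 $, where Aubin's Weyl-driven correction is too weak by itself. The logarithmic factor in $ n = 4 $ and the subtle cancellation between the volume-form expansion and the $ g^{ij} $ expansion must be tracked precisely, so that the negative $ -\gamma u_{\epsilon}^{2} $ contribution genuinely dominates all positive geometric and cut-off errors at the optimal scale of $ \epsilon $. The hypotheses $ R_{g} < 0 $ on $ \bar{\Omega} $ and $ \gamma > 0 $ conspire with this scale to secure the required strict inequality.
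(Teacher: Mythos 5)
Your overall strategy mirrors the paper's: both reduce the existence claim to a strict test--function inequality of the form $Q(u_{\epsilon}) < \lambda(\mathbb{S}^{n}) = aT$, test with a cut-off Aubin bubble in normal coordinates, and use the order-$\epsilon$ (respectively $\epsilon\lvert\log\epsilon\rvert$ when $n=4$) contribution of the $\gamma u^{2}$ term to win. The variational packaging differs slightly: you minimize $J_{\gamma}$ on the $\mathcal{L}^{p}$ unit sphere and pass through a Lagrange multiplier, whereas the paper invokes Wang's mountain-pass theorem (Theorem~\ref{per:thm1}) for divergence-form operators and reduces $\kappa < \kappa_{0}$ to the same inequality $J_{0}(u) < aT$ in (\ref{per:eqn12}). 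These two frameworks are equivalent here; yours is marginally more self-contained, the paper's handles the case $\kappa \leqslant 0$ without any first-eigenvalue preparation.

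There is, however, a genuine gap in the heart of your estimate, and your framing around the Weyl tensor obscures it. You describe the expansion as producing ``a positive Weyl-squared term of order $\epsilon^{2}\lvert W_{g}(x_{0})\rvert^{2}$ plus cut-off errors,'' which the $\gamma$ term of order $\epsilon$ then overrides. But in \emph{standard} normal coordinates (which is what the statement prescribes; conformal normal coordinates are not available) the leading geometric corrections come from the Ricci and scalar curvature entering through $\sqrt{\det g}$ and $R_{g}$, and these are \emph{also of order $\epsilon$} (respectively $\epsilon\lvert\log\epsilon\rvert$ when $n=4$) --- exactly the same order as the $\gamma$ contribution you are counting on. There is no a priori reason these terms are dominated by the $\gamma$ term; they could in principle be positive and of the same size. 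The paper's Appendix A resolves this by an exact cancellation: after rewriting the order-$\epsilon$ pieces via the identity
\begin{equation*}
\int_{\R^{n}} \frac{\lvert y \rvert^{2}}{(1 + \lvert y \rvert^{2})^{n}}\,dy \;=\; \frac{n(n-4)}{4(n-1)(n-2)} \int_{\R^{n}} \frac{1}{(1 + \lvert y \rvert^{2})^{n-2}}\,dy
\end{equation*}
and the Legendre duplication formula $\Gamma(z+\tfrac{1}{2})\Gamma(z) = 2^{1-2z}\sqrt{\pi}\,\Gamma(2z)$, one finds $(n-2) - \tfrac{1}{n} T K_{2}^{-2/(n-2)} = 0$, so the $R_{g}(0)$ contributions from $A_{1,3}$, from the volume form in the $\mathcal{L}^{p}$ norm, and from $\tfrac{1}{a}R_{g}(0)\int u^{2}$ cancel \emph{exactly}, isolating $-\tfrac{(n-2)\beta}{4(n-1)}\epsilon K_{3}/K_{2}$ as the only surviving order-$\epsilon$ piece. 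Moreover $R_{g} < 0$ on $\bar{\Omega}$ is not decorative: it fixes the direction of the one-sided bounds, e.g.\ $A_{1,3} \leqslant -\tfrac{R_{g}(0)}{6}\int_{\Omega}\varphi^{2}(\epsilon+\lvert y\rvert^{2})^{-2}\,dy$ in the $n=4$ case requires $-R_{g}(0) > 0$. Finally, the Weyl hypothesis is never used in this Lemma at all --- it is inherited from the surrounding context for later applications; you should not lean on a Weyl contribution here. Without carrying out the Gamma-function cancellation, your claim that ``$-\gamma\int u_{\epsilon}^{2}$ genuinely dominates all positive geometric errors at the optimal scale'' is an assertion rather than a proof; this is exactly the nontrivial content of Lemma~\ref{per:lemma1}.
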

Proposition \ref{intro:prop1} can be realized as an extension of the results of Brezis and Nirenberg \cite{Niren3} to curved spaces.
\medskip

Recall that equation (\ref{intro:eqn0}) cannot have a positive, smooth solution when $ \gamma = 0 $ due to Pohozaev's identity with the Euclidean geometry. A natural question is whether we can remove the correction term $ \gamma $ on curved spaces, more interestingly on some subsets of a positively curved manifold? Locally it asks the existence of positive, smooth solution of the following equation
\begin{equation}\label{intro:eqn0b}
-\frac{4(n - 1)}{n - 2} \Delta_{g} u + R_{g} u = \lambda u^{\frac{n + 2}{n - 2}} \; {\rm in} \; (\Omega, g), u = 0 \; {\rm on} \; \partial \Omega
\end{equation}
for some positive constant $ \lambda $ within the Riemannian domain $ (\Omega, g) $. Our second key proposition, which is a full extension of Brezis and Nirenberg's local result, states as follows:
\begin{proposition}\label{intro:prop2}
Let $ \Omega \subset \R^{n}, n \geqslant 4 $ equipped a Riemannian metric $ g $ such that the Weyl tensor of $ g $ does not vanish identically. Assume $ \Omega $ is small enough such that $ g $ has a global parametrization in terms of coordinates on $ \Omega $. Furthermore we assume $ R_{g} < 0 $ on $ \bar{\Omega} $. For any positive constant $ \lambda $, the Dirichlet problem (\ref{intro:eqn0b}) has a positive, smooth solution $ u \in \calC^{\infty}(\Omega) \cap H_{0}^{1}(\Omega, g) \cap \calC^{0}(\bar{\Omega}) $.
\end{proposition}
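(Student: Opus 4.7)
The plan is to obtain Proposition \ref{intro:prop2} as the limit $\gamma \downarrow 0$ of the family of solutions provided by Proposition \ref{intro:prop1}. For each small $\gamma > 0$ let $u_{\gamma} \in \calC^{\infty}(\Omega) \cap H_0^{1}(\Omega, g) \cap \calC^{0}(\bar{\Omega})$ denote the positive solution of (\ref{intro:eqn0c}). Testing (\ref{intro:eqn0c}) against $u_{\gamma}$ and exploiting $R_{g} < 0$ on $\bar{\Omega}$ together with the critical Sobolev embedding will yield uniform bounds
\begin{equation*}
\|u_{\gamma}\|_{H_0^{1}(\Omega, g)} + \|u_{\gamma}\|_{\calL^{p}(\Omega, g)} \leqslant C,
\end{equation*}
independent of $\gamma \in (0, 1)$. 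Extracting a subsequence $\gamma_{k} \downarrow 0$ I obtain $u_{\gamma_{k}} \rightharpoonup u$ in $H_0^{1}(\Omega, g)$, strongly in $\calL^{2}$, and a.e., so the nonnegative limit $u$ is automatically a weak solution of (\ref{intro:eqn0b}).

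The crucial issue is to certify $u \not\equiv 0$. By Lions' concentration--compactness, any loss of $\calL^{p}$ mass in passing to the limit would be captured by bubbles concentrating at finitely many points of $\bar{\Omega}$, each carrying an energy quantum tied to the sharp Sobolev constant $\lambda(\mathbb{S}^{n})$. I would therefore certify
\begin{equation*}
\limsup_{\gamma \downarrow 0} Q(u_{\gamma}) < \lambda(\mathbb{S}^{n}),
\end{equation*}
which prevents bubbling and forces strong $\calL^{p}$ convergence. Since any local function $\phi \in \calC_{c}^{\infty}(\Omega)$ is admissible in the variational problem underlying (\ref{intro:eqn0c}) for every $\gamma > 0$, it suffices to exhibit one local test function $\phi$ with $Q(\phi) < \lambda(\mathbb{S}^{n})$ and observe that $Q(u_{\gamma}) \leqslant Q(\phi) + o(1)$ as $\gamma \downarrow 0$.

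To build $\phi$, I would pick $x_{0} \in \Omega$ with $W_{g}(x_{0}) \neq 0$ (possible since $W_{g} \not\equiv 0$), center standard normal coordinates at $x_{0}$, and use the damped test function advertised in the introduction,
\begin{equation*}
\phi_{\epsilon}(x) = \frac{\varphi(x)}{\left( \epsilon + |x|^{2} \right)^{(n-2)/2}} + \eta_{\epsilon}(x),
\end{equation*}
with $\varphi$ a radial cut-off supported in a small normal ball around $x_{0}$ and $\eta_{\epsilon}$ a correction designed to kill the third-order term that obstructs Aubin's bare test function in dimensions $n = 4, 5$. Expanding $Q(\phi_{\epsilon})$ in normal coordinates, the Euclidean leading term is $\lambda(\mathbb{S}^{n})$, and the subleading coefficient should combine a negative $|W_{g}(x_{0})|^{2}$ contribution at order $\epsilon^{2}$ (respectively $\epsilon^{2} \log(1/\epsilon)$ when $n = 4$) with a negative scalar-curvature contribution coming from $R_{g} < 0$. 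The main obstacle is handling $\eta_{\epsilon}$: one must engineer it so that its cross-terms in the numerator of $Q(\phi_{\epsilon})$ do not spoil the sign of the $\epsilon^{2}$ coefficient. This is the technical heart of the new local method and is precisely what makes the threshold $n \geqslant 4$ accessible, in contrast to Aubin's $n \geqslant 6$ computation.

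Once $u \not\equiv 0$ is secured, a Brezis--Kato / Moser iteration promotes $u$ from $H_0^{1}$ to $\calL^{\infty}(\Omega)$, standard elliptic regularity applied to the critical equation (\ref{intro:eqn0b}) upgrades $u$ to $\calC^{\infty}(\Omega) \cap \calC^{0}(\bar{\Omega})$, and the strong maximum principle together with the Hopf boundary lemma deliver $u > 0$ in $\Omega$ with $u = 0$ on $\partial \Omega$, completing the proof modulo the local test-function estimate that supplies all the geometric input.
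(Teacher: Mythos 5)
Your route differs substantially from the paper's, and it contains a genuine gap. The paper does \emph{not} pass to a limit of solutions $u_{\gamma}$ as the perturbation parameter tends to $0$. Instead, in Section~3 it fixes a domain $\Omega_d$, solves the auxiliary linear problem
\begin{equation*}
-a\Delta_{g} v - R_{g} v = a\Delta_{g} u + R_{g} u - R_{g} u^{1-\gamma} \quad \text{in } \Omega_d, \qquad v = 0 \text{ on } \partial\Omega_d,
\end{equation*}
with $u$ the Aubin bubble, and proves directly that the single test function $\phi = u_{\beta_0,\epsilon d^2,\Omega_d} + v$ satisfies the Yamabe-quotient inequality $J_{0,\beta,\Omega_d}(\phi) < aT$ uniformly for every $\beta \in [0,\beta_0]$, in particular at $\beta = 0$. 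Once this one inequality is in hand, Wang's variational theorem (Theorem~\ref{per:thm1}) gives existence, positivity, and regularity of a solution in one stroke. No limit of solutions, no concentration--compactness, no bubbling analysis.

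The gap in your proposal is at the very first step. Proposition~\ref{intro:prop1} (equivalently Proposition~\ref{per:prop1}) produces a solution on \emph{some} small ball whose radius, together with the Aubin parameter $\epsilon$, is chosen as a function of the perturbation constant and of $g$. There is no guarantee that this domain is independent of $\gamma$; indeed the proof explicitly says the smallness of the ball is chosen depending on $\beta$. So your sequence $u_{\gamma_k}$ may live on a sequence of shrinking domains, and there is no fixed domain $\Omega$ on which to extract a weak limit. To repair this you would first need to fix $\Omega_d$ and show that solutions exist on that single domain uniformly in $\gamma$ --- which is exactly the content of the paper's Proposition~\ref{PDE:prop1}. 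At that point your concentration--compactness machinery becomes redundant, since Wang's theorem already applies directly at $\gamma = 0$.

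There is a second, more subtle issue in your description of the local test function. You ascribe the subcritical gap to a negative $\lvert W_g(x_0)\rvert^2$ contribution at order $\epsilon^2$ (or $\epsilon^2\log(1/\epsilon)$ when $n=4$), which is Aubin's classical mechanism and is precisely what forces $n\geqslant 6$. The paper's mechanism is different: after a preliminary conformal change making $R_g < 0$ near the chosen point, the gain in the quotient comes from the \emph{scalar curvature} term at the larger order $\epsilon$ (respectively $\epsilon\lvert\log\epsilon\rvert$ when $n=4$), in the spirit of Brezis--Nirenberg; see the computation culminating in $Q_{\epsilon,\Omega} \leqslant K_1/K_2 + O(\epsilon^{3/2}) - c\,\epsilon K_3/K_2$ in Appendix~A. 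This is what lowers the admissible dimension to $n\geqslant 4$. Your unspecified correction $\eta_\epsilon$ is, in the paper, the explicit function $v$ obtained from the auxiliary PDE; simply asserting that such a correction can be engineered to control the cross-terms leaves the technical heart of the argument unproved.

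Your last paragraph on regularity (Brezis--Kato/Moser, strong maximum principle, Hopf lemma) is fine as a general-purpose fallback, but it is also superfluous here: Wang's Theorem~\ref{per:thm1} already delivers a positive solution in $\calC^{\infty}(\Omega)\cap\calC^0(\bar\Omega)$.
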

\medskip

Proposition \ref{intro:prop1} and Proposition \ref{intro:prop2} are proved by local variational methods due to the results of Wang \cite{WANG}, which are arisen from test functions of the Yamabe-type quotients. Observe that both (\ref{intro:eqn0a}) and (\ref{intro:eqn0b}) have the conformal Laplacian as the differential operator, therefore the functionals are essentially coming from the Yamabe quotient and hence the local test functions are tightly related to the Aubin's original choice. As we mentioned above, the solution of (\ref{intro:eqn0b}) is closely related to the local test function of the Yamabe quotient $ Q(u) $. Precisely speaking, the new test function we choose in proving Proposition \ref{intro:prop2} plays an important role to compare $ \lambda(M) $ and $ \lambda(\mathbb{S}^{n}) $ for compact manifolds $ M $, or to compare $ \lambda(M) $ and $ \lambda_{\infty}(M) $ for noncompact manifolds $ M $, as we shall see in \S4. 

We assume that the readers are familiar with positive-integer-order Lebesgue and Sobolev spaces on Riemannian manifolds, which we usually denoted as $ H^{s}(M, g) $, $ \calL^{p}(M, g) $ and $ W^{s,q}(M, g) $. We also assume the background of standard $ H^{s} $- and $ W^{s, q} $-type elliptic regularity, and the Schauder estimates for H\"older spaces $ \calC^{s, \alpha}(M) $. We use standard notations $ \calC^{\infty} $ and $ \calC^{0} $ to denote smooth and continuous functions on associated domains, respectively. For Riemannian domains $ (\Omega, g) $, we always consider $ (\bar{\Omega}, g) $ as a compact manifold with smooth boundary extended from $ (\Omega, g) $. 

We point out that with a very minor adjustment, the local method can be applied to prescribed scalar curvature problems for both not-locally-conformally-flat compact manifolds of dimensions at least $ 4 $, or a class of non-compact manifolds introduced in Theorem \ref{intro:thm2}. Other results for prescribed scalar curvature problems were discussed in \cite{BE}, \cite{YYL}, \cite{MaMa}, \cite{SY}, etc. For the Yamabe problem with non-trivial Dirichlet boundary condition, we refer to \cite{XU7}.

This article is organized as follows. In \S2, we prove the existence of a positive, smooth solution of (\ref{intro:eqn0c}) in Proposition \ref{per:prop1}, which is reduced to the verification of key estimate $ Q_{\epsilon, \Omega}(u_{\epsilon, \Omega}) < \lambda(\mathbb{S}^{n}) $ in Lemma \ref{per:lemma1}. The local estimate holds as low as dimension $ 4 $ with just the standard local normal coordinates. The quantity $ Q_{\epsilon, \Omega} $ is very similar to the Yamabe quotient, which is defined in (\ref{per:eqn12}). The variational method we applied is due to Wang \cite[Thm.~1.1]{WANG}. The estimates in Lemma \ref{per:lemma1} is a variation of the calculation of Brezis and Nirenberg \cite{Niren3}. In \S3, we improve our local estimates by removing the perturbed term $ \beta u $ in Proposition \ref{PDE:prop1}. We introduced a special "blow-up" analysis in order to construct a new test function in (\ref{PDE:EXT}). The first subsection is devoted to the analysis of the following partial differential equation
\begin{equation*}
-a\Delta_{g} v - R_{g} v = a\Delta_{g} u + R_{g} u - R_{g} u^{1 - \gamma} \; {\rm in} \; \Omega_{d}, v \equiv 0 \; {\rm on} \; \partial \Omega_{d}.
\end{equation*} 
for some space $ \Omega_{d} $ where $ d $ is the diameter. The new test function is constructed by using the partial differential equation right above, and the second subsection is to show that $ Q(\phi) < \lambda(\mathbb{S}^{n}) $, where $ \phi $ only depends on fixed domain $ \Omega_{d} $ and the metric $ g $. In \S4, we applied the local estimate in \S3 to show that Theorem \ref{intro:thm1} holds in the first subsection. In the second subsection, we apply the local estimate to show that Theorem \ref{intro:thm2} holds, which is a new result for the Yamabe problem of a class of complete, non-compact manifolds with nonnegative scalar curvature. We only assume very weak geometric conditions for the non-compact manifolds. 
\medskip

{\bf{Acknowledgements}}. The author would like to express gratitude to Prof. Robert McOwen for his valuable suggestions in this topic among many discussions.

\medskip

\section{The Perturbed Local Yamabe Equation with Dirichlet Boundary Condition}
In this section, we extend Brezis and Nirenberg's local analysis result on open subsets of $ \R^{n}, n \geqslant 4 $ to the Riemannian domain $ (\Omega, g), \dim \Omega \geqslant 4 $, i.e. we show that for any constants $ \beta, \lambda > 0 $, the following perturbed Yamabe equation 
\begin{equation}\label{per:eqn1}
-a\Delta_{g} u + R_{g} u - \beta u = \lambda u^{p-1} \; {\rm in} \; \Omega, u \equiv 0 \; {\rm on} \; \partial \Omega.
\end{equation}
has a positive, smooth solution on some Riemannian domains $ (\Omega, g) $ such that the Weyl tensor on $ \Omega $ does not vanish identically. We further assume that $ g $ has a global parametrization on $ \Omega $.  In addition, we assume that $ R_{g} < 0 $ everywhere in $ \Omega $.

Using variational method, the problem is reduced to verify the inequality (\ref{per:eqn12}) below. We show in this section and Appendix A that Aubin's test function works. Aubin's test function has two restrictions:
\begin{enumerate}[(i).]
\item The dimension of the space must be at least $ 6 $; 
\item We need a special local coordinate system.
\end{enumerate}
We will show in this section and Appendix A that both restrictions can be removed, but we need to add a ``correction" term, as Brezis and Nirenberg indicated in \cite{Niren3}. In a word, Aubin's local method can be applied to spaces with dimensions as low as $ 4 $, and the special choice of coordinates are not required. Since the estimates in Lemma \ref{per:lemma1} is essentially due to Brezis and Nirenberg \cite{Niren3}, we introduce the detailed proof of Lemma \ref{per:lemma1} in Appendix A.
\medskip

Given a general second order linear elliptic PDE with the Dirichlet boundary condition:
\begin{equation}\label{per:eqn2}
\begin{split}
Lu & : = -\sum_{i, j} \partial_{i} \left (a_{ij}(x) \partial_{j} u \right) = b(x) u^{p- 1} + f(x, u) \; {\rm in} \; \Omega; \\
u & > 0 \; {\rm in} \; \Omega, u = 0 \; {\rm on} \; \partial \Omega.
\end{split}
\end{equation}
with $ p - 1 = \frac{n+2}{n -2} $ the critical Sobolev exponent with respect to the $ H_{0}^{1} $-solutions of (\ref{per:eqn2}), the variational method says that (\ref{per:eqn2}) is the Euler-Lagrange equation of the functional
\begin{equation}\label{per:eqn3}
J(u) = \int_{\Omega} \left( \frac{1}{2} \sum_{i, j} a_{ij}(x) \partial_{i}u \partial_{j} u - \frac{b(x)}{p} u_{+}^{p} - F(x, u) \right) dx,
\end{equation}
with appropriate choices of $ a_{ij}, b $ and $ F $. Here $ u_{+} = \max \lbrace u, 0 \rbrace $ and $ F(x, u) = \int_{0}^{u} f(x, t)dt $.

Note that although the second order differential operator in (\ref{per:eqn1}) is not a divergence form in the strong sense, it is the Euler-Lagrange equation of (\ref{per:eqn3}) in the variational sense, as we will show later.  

Set
\begin{equation}\label{per:eqn4}
\begin{split}
A(O) & = \text{essinf}_{x \in O} \frac{\det(a_{ij}(x))}{\lvert b(x) \rvert^{n-2}}, \forall O \subset \Omega; \\
T & = \inf_{u \in H_{0}^{1}(\Omega)}  \frac{\int_{\Omega} \lvert Du \rvert^{2} dx}{\left( \int_{\Omega} \lvert u \rvert^{p} dx \right)^{\frac{2}{p}}}; \\
\kappa & = \inf_{u \neq 0} \sup_{t > 0} J(tu), \kappa_{0} = \frac{1}{n} T^{\frac{n}{2}} \left( A(\Omega) \right)^{\frac{1}{2}}.
\end{split}
\end{equation}
The core theorem is due to Wang \cite[Thm.~1.1]{WANG}. 
\begin{theorem}\label{per:thm1}\cite[Thm.~1.1, Thm.~1.4]{WANG} Let $ \Omega $ be a bounded smooth domain in $ \R^{n}, n \geqslant 3 $. Let $ Lu = -\sum_{i, j} \partial_{i} \left (a_{ij}(x) \partial_{j} u \right) $ be a second order elliptic operator with smooth coefficients in divergence form. Let ${\rm Vol}_g(\Omega)$ and the diameter of $\Omega$ sufficiently small. Let $ b(x) \neq 0 $ be a nonnegative bounded measurable function. Let $ f(x, u) $ be measurable in $ x $ and continuous in $ u $. Assume
\begin{enumerate}[(P1).]
\item There exist $ c_{1}, c_{2} > 0 $ such that $ c_{1} \lvert \xi \rvert^{2} \leqslant \sum_{i, j} a_{ij}(x) \xi_{i} \xi_{j} \leqslant c_{2} \lvert \xi \rvert^{2}, \forall x \in \Omega, \xi \in \R^{n} $;
\item $ \lim_{u \rightarrow + \infty} \frac{f(x, u)}{u^{p-1}} = 0 $ uniformly for $ x \in \Omega $;
\item $ \lim_{u \rightarrow 0} \frac{f(x, u)}{u} < \lambda_{1} $ uniformly for $ x \in \Omega $, where $ \lambda_{1} $ is the first eigenvalue of $ L $;
\item There exists $ \theta \in (0, \frac{1}{2}), M \geqslant 0, \sigma > 0 $, such that $ F(x, u) = \int_{0}^{u} f(x, t)dt \leqslant \theta u f(x, u) $ for any $ u \geqslant M $, $ x \in \Omega(\sigma) = \lbrace x \in \Omega, 0 \leqslant b(x) \leqslant \sigma \rbrace $.
\end{enumerate}
Furthermore, we assume that $ f(x, u) \geqslant 0 $, $ f(x, u) = 0 $ for $ u \leqslant 0 $. We also assume that $ a_{ij}(x) \in \calC^{0}(\bar{\Omega}) $. If
\begin{equation}\label{per:eqn5}
\kappa < \kappa_{0}
\end{equation}
then the Dirichlet problem (\ref{per:eqn2}) possesses a positive solution $ u \in \calC^{\infty}(\Omega) \cap \calC^{0}(\bar{\Omega}) $ which satisfies $ J(u) \leqslant \kappa $.
\end{theorem}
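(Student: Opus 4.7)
The plan is to find a nontrivial critical point of the functional $J$ in \eqref{per:eqn3} by a mountain-pass minimax argument; the threshold $\kappa_0$ reflects exactly the failure of compactness at the critical Sobolev exponent $p = 2n/(n-2)$. At minimax levels $c < \kappa_0$ the Palais--Smale (PS) condition holds and produces a critical point, and the hypothesis $\kappa < \kappa_0$ places the mountain pass value squarely in this compactness regime.

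First I would set up the variational structure. Weak critical points of $J$ coincide with $H^1_0$-solutions of the truncated equation $Lu = b(x)u_+^{p-1} + f(x,u)$. Testing such a solution against $u_- = \max\{-u,0\}$ and using the ellipticity (P1) together with $f(x,u) = 0$ for $u \leqslant 0$ forces $u \geqslant 0$. Hypotheses (P2)--(P3) then supply the mountain-pass geometry: (P3) guarantees $J(u) \geqslant \rho > 0$ on some small sphere $\|u\|_{H^1_0} = r$, while (P4) applied where $b$ is small, together with the critical term where $b$ is away from zero, gives $J(tu_0) \to -\infty$ as $t \to +\infty$ for any nonnegative $u_0 \not\equiv 0$. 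The minimax value $\kappa$ is thus well-defined and strictly positive.

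The principal obstacle is the verification of a local PS condition strictly below $\kappa_0$. For a sequence $\{u_k\} \subset H^1_0(\Omega)$ with $J(u_k) \to c < \kappa_0$ and $dJ(u_k) \to 0$, boundedness of $\|u_k\|_{H^1_0}$ follows from an Ambrosetti--Rabinowitz-type combination: on $\{b > \sigma\}$ the relation $pJ(u_k) - \langle dJ(u_k),u_k\rangle$ controls $\int b\, u_k^p$, while on $\Omega(\sigma)$ hypothesis (P4) supplies the needed superquadraticity of $F$. Passing to a weak limit $u_k \rightharpoonup u$, the only obstruction to strong convergence is a finite accumulation of rescaled Aubin--Talenti bubbles centered at points $x_j \in \bar{\Omega}$. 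Using the sharp Sobolev constant $T$ together with (P1), each such bubble contributes energy at least $\frac{1}{n} T^{n/2} (\det a_{ij}(x_j))^{1/2} |b(x_j)|^{-(n-2)/2} \geqslant \kappa_0$. Since the limiting energy lies below $\kappa_0$, no bubble can form and strong convergence follows. The hypothesis that the diameter and volume of $\Omega$ are sufficiently small enters here to ensure that the subcritical perturbation $f(x,u)$ and the lower-order part of $L$ cannot create alternative obstructions at small scales.

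Applying the mountain pass theorem at level $\kappa < \kappa_0$ then yields a nonzero critical point $u \geqslant 0$ with $J(u) \leqslant \kappa$. A standard Brezis--Kato iteration upgrades $u$ to $\calL^q(\Omega)$ for all $q < \infty$, after which Schauder estimates applied to the linear equation satisfied by $u$ (with $f$ and $b u^{p-1}$ viewed as Hölder data) give $u \in \calC^\infty(\Omega) \cap \calC^0(\bar{\Omega})$; the strong maximum principle, together with the zero boundary data, then yields $u > 0$ in $\Omega$. The hardest step throughout is the concentration analysis of Step~3: one must pin down the precise energy threshold below which bubble formation is excluded and show that it coincides with the geometric quantity $A(\Omega)$ through the sharp Sobolev constant $T$.
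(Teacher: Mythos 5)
This statement is cited in the paper as \cite[Thm.~1.1, Thm.~1.4]{WANG} and is used as a black box; the paper contains no proof of it, so there is no in-paper argument to compare your proposal against. Your outline is, however, a faithful sketch of the standard Brezis--Nirenberg-type route that Wang's proof follows: mountain-pass geometry from (P2)--(P3), an Ambrosetti--Rabinowitz estimate on the superlevel set of $b$ together with (P4) on $\Omega(\sigma)$ to get boundedness of PS sequences, a concentration-compactness analysis showing that any Aubin--Talenti bubble at $x_j$ costs at least $\frac{1}{n}T^{n/2}\bigl(\det a_{ij}(x_j)/|b(x_j)|^{n-2}\bigr)^{1/2}\geqslant\kappa_0$, and then Brezis--Kato plus Schauder bootstrapping and the strong maximum principle. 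Your identification of $\kappa_0$ with the infimal bubble energy via $A(\Omega)$ is exactly the right mechanism, and the sign argument (testing against $u_-$ using $f(x,u)=0$ for $u\leqslant 0$) is standard.

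Two small cautions. First, you reference ``Step~3'' though your sketch has no numbered steps; this is cosmetic. Second, and more substantively, you attribute the hypothesis that ${\rm Vol}_g(\Omega)$ and ${\rm diam}\,\Omega$ are small to controlling ``alternative obstructions at small scales,'' but this smallness assumption is an addition made by the present paper (not present in the classical formulation) and is used downstream to guarantee that $\lambda_1(L)>0$ and that the local coordinate expansions in \S2--\S3 are valid; it does not play the compactness-restoring role you suggest. This does not affect the correctness of your overall architecture, but it slightly misattributes the purpose of that hypothesis.
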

\medskip

We claim that (\ref{per:eqn1}) is the Euler-Lagrange equation of the functional
\begin{equation}\label{per:eqn6}
\begin{split}
J^{*}(u) & : = \int_{\Omega} \left( \frac{1}{2} a\sqrt{\det(g)} g^{ij} \partial_{i}u \partial_{j} u - \frac{\sqrt{\det(g)}}{p} \lambda u_{+}^{p} \right) dx \\
& \qquad  - \int_{\Omega} \int_{0}^{u} \sqrt{\det(g)(x)}(-R_{g}(x) + \beta) dx.
\end{split}
\end{equation}
It is straightforward to check that
\begin{align*}
\frac{d}{dt} \bigg|_{t = 0} J^{*}(u + t v) & = \int_{\Omega} \left( \left(-a\partial_{i}(\sqrt{\det(g)} g^{ij} \partial_{j} u) \right) v - \sqrt{\det(g)} \lambda u_{+}^{p-1} v \right) dx \\
& \qquad - \int_{\Omega} \sqrt{\det(g)} (-R_{g} + \beta) u  v dx \\
& = \int_{\Omega} -a \frac{1}{\sqrt{\det(g)}} \left(\partial_{i}(\sqrt{\det(g)}g^{ij} \partial_{j} u ) \right) v \sqrt{\det(g)} dx \\
& \qquad \int_{\Omega} \left( - \lambda u_{+}^{p-1} + \left(R_{g} - \beta \right) u \right) v \sqrt{\det(g)} dx \\
& = \langle -a\Delta_{g} u - \lambda u_{+}^{p-1} + \left( R_{g} - \beta \right) u, v \rangle_{g}.
\end{align*}
Hence $ u > 0 $ in $ \Omega $ minimizes $ J^{*}(u) $ if and only if (\ref{per:eqn6}) admits a positive solution. Note that $ J^{*} (u) $ is just a special expression of (\ref{per:eqn3}) with
\begin{equation}\label{per:eqn7}
\begin{split}
a_{ij}(x) & = a\sqrt{\det(g)} g^{ij}(x), b(x) = \lambda \sqrt{\det(g)(x)}, \\
f(x, u) & = \sqrt{\det(g)(x)} \left(-R_{g}(x) + \beta \right)u.
\end{split}
\end{equation}
So is the associated Euler-Lagrange equation. It follows that the existence of the solution of (\ref{per:eqn1}) is reduced to the validity of the hypotheses of Theorem \ref{per:thm1}, especially (\ref{per:eqn5}). For the Yamabe-type equation (\ref{per:eqn1}), we show below that the relation $ \kappa < \kappa_{0} $ can be verified by an inequality highly related to the Yamabe quotient. The Yamabe invariant, or the multiple of the best Sobolev constant of $ H^{1}(\Omega, g) \hookrightarrow \calL^{p}(\Omega, g) $, is the upper bound.

If there exists some $ u > 0 $ in $ \Omega $, $ u \equiv 0 $ on $ \partial \Omega $ such that $ \sup_{t > 0} J^{*}(tu) < \kappa_{0} $ then (\ref{per:eqn5}) holds. We have
\begin{align*}
J^{*}(tu) & = \int_{\Omega} \left( t^{2} \frac{1}{2} a\sqrt{\det(g)} g^{ij} \partial_{i} u \partial_{j}u  - t^{p} \frac{\sqrt{\det(g)} }{p} u^{p} \lambda \right) dx \\
& \qquad - \int_{\Omega} t^{2} \frac{1}{2} \sqrt{\det(g)} \left(-R_{g}(x) + \beta \right) u^{2}  dx.
\end{align*}
It follows that $ J^{*}(tu) < 0 $ when $ t $ large enough since $ p > 1 $. Hence $ \sup_{t > 0} J^{*}(tu) \geqslant 0 $ is achieved for some finite $ t_{0} \geqslant 0 $. If $ t_{0} = 0 $, then $ \sup_{t > 0} J^{*}(tu)  = 0 $ and thus $ \kappa \leqslant 0 $, again $ \kappa < \kappa_{0} $ holds trivially. Assume now $ t_{0} > 0 $. In this case,
\begin{equation}\label{per:eqn8}
\begin{split}
& \frac{d}{dt} \bigg|_{t = t_{0}} J^{*}(tu) = 0 \\
& \qquad \Rightarrow  t_{0} \int_{\Omega} a\sqrt{\det(g)} g^{ij} \partial_{i} u \partial_{j}u dx - t_{0}^{p- 1} \int_{\Omega} \sqrt{\det(g)} \lambda u^{p} dx \\
& \qquad \qquad - t_{0} \int_{\Omega} \sqrt{\det(g)} (-R_{g}(x) + \beta ) u^{2} dx = 0.
\end{split}
\end{equation}
Denote
\begin{align*}
V_{1} & =  \int_{\Omega} a\sqrt{\det(g)} g^{ij} \partial_{i} u \partial_{j}u dx, V_{2} = \int_{\Omega} \sqrt{\det(g)} (-R_{g}(x) + \beta) u^{2} dx, \\
W & = \left(\int_{\Omega} \sqrt{\det(g)} \lambda u^{p} dx \right)^{\frac{1}{p}}.
\end{align*}
Since $ t_{0} > 0 $, (\ref{per:eqn8}) implies
\begin{equation}\label{per:eqn9}
t_{0}^{p-2} = \frac{V_{1} - V_{2}}{W^{p}} \Rightarrow t_{0} = \frac{\left( V_{1} - V_{2} \right)^{\frac{1}{p-2}}}{W^{\frac{p}{p-2}}}.
\end{equation}
By (\ref{per:eqn9}), $ J^{*}(t_{0} u) $ is of the form
\begin{equation*}
J^{*}(t_{0} u) = \frac{\left( V_{1} - V_{2} \right)^{\frac{2}{p-2}}}{2W^{\frac{2p}{p-2}}} V_{1} - \frac{\left( V_{1} - V_{2} \right)^{\frac{p}{p-2}}}{W^{\frac{p^{2}}{p-2}}} \cdot \frac{1}{p} W^{p} - \frac{\left( V_{1} - V_{2} \right)^{\frac{2}{p-2}}}{2W^{\frac{2p}{p-2}}} V_{2}.
\end{equation*}
Note that $ p = \frac{2n}{n - 2} $ hence we have
\begin{align*}
\frac{2}{p-2} & = \frac{n-2}{2}, \frac{p}{p -2} = \frac{n}{2}, \\
p - \frac{p^{2}}{p - 2} & = p  \left( 1- \frac{p}{p - 2} \right) = -n \Rightarrow \frac{2p}{p - 2} = n = \frac{p^{2}}{p - 2} - p.
\end{align*}
In terms of $ n $, $ J^{*}(t_{0}u) $ becomes
\begin{equation}\label{per:eqn10}
\begin{split}
J^{*}(t_{0}u) & = \left( \frac{1}{2} - \frac{1}{p} \right) \frac{\left( V_{1} - V_{2} \right)^{\frac{n}{2}}}{W^{n}} = \frac{1}{n}  \frac{\left( V_{1} - V_{2} \right)^{\frac{n}{2}}}{W^{n}} \\
& =  \frac{1}{n} \left( \frac{\int_{\Omega} a\sqrt{\det(g)} g^{ij} \partial_{i} u \partial_{j} u dx- \int_{\Omega} \sqrt{\det(g)} \left(-R_{g} + \beta \right) u^{2} dx}{\left( \int_{\Omega} \sqrt{\det(g)} \lambda u^{p} dx \right)^{\frac{2}{p}}} \right)^{\frac{n}{2}}.
\end{split}
\end{equation}
The inequality $ \kappa < \kappa_{0} $ reduces to find some $ u \in H_{0}^{1}(\Omega) $, $ u > 0 $ such that (\ref{per:eqn10}) is less than $ \kappa_{0} $. We give the expression of $ \kappa_{0} $. by $ a_{ij}, b $ in (\ref{per:eqn8}), we have
\begin{align*}
A(\Omega) & = \text{essinf}_{\Omega} \frac{\det(a_{ij}(x))}{\lvert b(x) \rvert^{n-2}} = \text{essinf}_{\Omega} \frac{\det\left(a \sqrt{\det(g)} g^{ij} \right)}{ \left( \sqrt{\det(g)} K \right)^{n-2}} \\
& = \text{essinf}_{\Omega} \frac{\left( \sqrt{\det(g)} \right)^{n} a^{n} \det( g^{ij})}{ \left( \sqrt{\det(g)} \lambda \right)^{n-2}} \\
& = a^{n} \lambda^{2 - n} \sqrt{\det(g)}^{2} \det(g^{ij}) = \lambda^{2 - n} a^{n}.
\end{align*}
It follows that
\begin{equation}\label{per:eqn11}
\kappa_{0} = \frac{1}{n} T^{\frac{n}{2}} A(\Omega)^{\frac{1}{2}} = \frac{1}{n} \lambda^{\frac{2 - n}{2}} a^{\frac{n}{2}} T^{\frac{n}{2}}.
\end{equation}
Compare (\ref{per:eqn10}) and (\ref{per:eqn11}), the inequality $ \kappa < \kappa_{0} $ is reduced to find some $ u > 0 $ in $ \Omega $ such that
\begin{align*}
& \frac{\int_{\Omega} a\sqrt{\det(g)} g^{ij} \partial_{i} u \partial_{j} u dx- \int_{\Omega} \sqrt{\det(g)} \left(-R_{g} + \beta \right) u^{2} dx}{\left( \int_{\Omega} \sqrt{\det(g)} \lambda u^{p} dx \right)^{\frac{2}{p}}} \\
& \qquad < \lambda^{\frac{2-n}{n}} aT.
\end{align*}
The inequality above holds if we can show that there exists some function $ u \in \calC_{c}^{\infty}(\Omega), u > 0 $ such that
\begin{equation}\label{per:eqn12}
\begin{split}
J_{0}(u) & : = \frac{a\int_{\Omega} \sqrt{\det(g)} g^{ij} \partial_{i} u \partial_{j} u dx- \int_{\Omega}  \sqrt{\det(g)} \left(-R_{g} + \beta \right) u^{2} dx}{\left( \int_{\Omega} \sqrt{\det(g)} u^{p} dx \right)^{\frac{2}{p}}} \\
& \qquad <  aT.
\end{split}
\end{equation}
Note that all other conditions are satisfied trivially. Since we assume $ R_{g} < 0 $ on $ \bar{\Omega} $, the $ f(x, u) = \sqrt{\det(g)(x)}(-R_{g}(x) + \beta) u \geqslant 0 $ for every $ \beta > 0 $ and every positive solution $ u > 0 $.

We point out that Aubin's test function can be applied directly when $ (\Omega, g) $ is not locally conformally flat and $ \dim \Omega \geqslant 6 $. It follows that (\ref{per:eqn1}) has a positive, smooth solution by the same argument in solving the Yamabe problem, see e.g. \cite[\S4]{PL}.

We now show that (\ref{per:eqn12}) holds for Aubin's test function
\begin{equation}\label{per:eqnext3}
\phi_{\epsilon, \Omega} = \frac{\varphi(x)}{\left( \epsilon + \lvert x \rvert^{2} \right)^{\frac{n-2}{2}}}.
\end{equation}
Without loss of generality, we may choose $ \Omega = B_{0}(r) $, a ball of radius $ r $ centered at the origin $ 0 \in \Omega $. We may assume further that the test function (\ref{per:eqnext3}) is radial. Denote
\begin{align*}
K_{1} & = (n - 2)^{2} \int_{\R^{n}} \frac{\lvert y \rvert^{2}}{( 1 + \lvert y \rvert^{2} )^{n}} dy; \\
K_{2} & =\left( \int_{\R^{n}} \frac{1}{( 1 + \lvert y \rvert^{2} )^{n}} dy \right)^{\frac{2}{p}}; \\
K_{3} & = \int_{\R^{n}} \frac{1}{(1 + \lvert y \rvert^{2})^{n - 2}} dy.
\end{align*}
Note that $ K_{1}, K_{2} $ are well-defined when $ n \geqslant 4 $ while $ K_{3} $ is integrable for $ n \geqslant 5 $. The best Sobolev constant $ T $ satisfies
\begin{equation}\label{per:eqnext1}
T = \frac{K_{1}}{K_{2}}, n \geqslant 4.
\end{equation}
\medskip

\begin{lemma}\label{per:lemma1} Let $ \Omega = B_{0}(r) $ for small enough $ r $ with $ \dim \Omega \geqslant 4 $. Let $ \beta > 0 $ be any negative constant and $ T $ be the best Sobolev constant in (\ref{per:eqnext1}). Assume that $ g $ is not locally conformally flat in $ \Omega $ and $ R_{g} < 0 $ on $ \bar{\Omega} $. The quantity $ Q_{\epsilon, \Omega} $ satisfies
\begin{equation}\label{per:eqnext2}
Q_{\epsilon, \Omega} : =\frac{\lVert \nabla_{g} u_{\epsilon, \Omega} \rVert_{\calL^{2}(\Omega, g)}^{2} + \frac{1}{a} \int_{\Omega} \left(R_{g} - \beta \right) u_{\epsilon, \Omega}^{2} \sqrt{\det(g)} dx}{\lVert u_{\epsilon, \Omega} \rVert_{\calL^{p}(\Omega, g)}^{2}} < T.
\end{equation}
with the test functions $ u_{\epsilon, \Omega} = \phi_{\epsilon, \Omega} $ given in (\ref{per:eqnext3}). The choice of $ \epsilon $ depends on $ \Omega, \beta, g $.
\end{lemma}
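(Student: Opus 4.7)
The plan is to perform an asymptotic expansion of $Q_{\epsilon, \Omega}(u_{\epsilon, \Omega})$ as $\epsilon \to 0^{+}$, adapting Brezis--Nirenberg's Euclidean computation to the Riemannian setting. I would center normal coordinates at a point $0 \in \Omega$, take $\Omega = B_{0}(r)$ with $r$ small, and use a radial cutoff $\varphi$ which equals $1$ on $B_{0}(r/2)$ and vanishes on $\partial B_{0}(r)$. In these coordinates $g_{ij}(x) = \delta_{ij} + O(|x|^{2})$ and $\sqrt{\det g(x)} = 1 + O(|x|^{2})$, so all discrepancies between the Euclidean and Riemannian norms are pushed into lower-order terms in $\epsilon$.

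Next I would compute each of the three pieces of $Q_{\epsilon, \Omega}$ separately via the scaling substitution $y = x/\sqrt{\epsilon}$. On the flat model, the classical computation already reproduces $T = K_{1}/K_{2}$, giving
\begin{equation*}
\|u_{\epsilon, \Omega}\|_{\calL^{p}(\Omega, g)}^{2} = K_{2}\, \epsilon^{(2-n)/2}(1 + o(1)), \qquad \|\nabla_{g} u_{\epsilon, \Omega}\|_{\calL^{2}(\Omega, g)}^{2} = K_{1}\, \epsilon^{(2-n)/2}(1 + o(1)),
\end{equation*}
with metric corrections of relative order $\epsilon$ or smaller arising from $g^{ij} - \delta^{ij}$ and $\sqrt{\det g} - 1$, and tail contributions from $\nabla \varphi$ that are negligible because $\varphi$ is constant where $u_{\epsilon, \Omega}$ concentrates. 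The decisive piece is $\frac{1}{a}\int_{\Omega} (R_{g} - \beta)\, u_{\epsilon, \Omega}^{2}\, \sqrt{\det g}\, dx$: Taylor-expanding $R_{g}$ about the origin and using the radial symmetry of $u_{\epsilon, \Omega}^{2}$, its leading term is $(R_{g}(0) - \beta)/a$ times $\int_{\Omega} u_{\epsilon, \Omega}^{2}\, dx$.

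For $n \geqslant 5$, $\int_{\Omega} u_{\epsilon, \Omega}^{2}\, dx = K_{3}\, \epsilon^{(4-n)/2}(1 + o(1))$, so after dividing by $\|u_{\epsilon, \Omega}\|_{\calL^{p}}^{2}$ this term contributes $\frac{R_{g}(0) - \beta}{a}\, \frac{K_{3}}{K_{2}}\, \epsilon + o(\epsilon)$, which is strictly negative since $R_{g}(0) < 0$ and $\beta > 0$. Assembling all pieces yields $Q_{\epsilon, \Omega} = T - C \epsilon + o(\epsilon) < T$ for $\epsilon$ small enough. The main obstacle is the borderline dimension $n = 4$: there $\int_{\Omega} u_{\epsilon, \Omega}^{2}\, dx \sim |\log \epsilon|$ rather than a power of $\epsilon$, producing a correction of order $\epsilon |\log \epsilon|$. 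I would carefully track the explicit constant so that the sign, still coming from $R_{g}(0) - \beta < 0$, remains negative and dominates the $O(\epsilon)$ metric corrections produced by working in plain (rather than conformal) normal coordinates. The upshot is the strict inequality $Q_{\epsilon, \Omega} < T$ in every dimension $n \geqslant 4$ once $\epsilon$ is chosen sufficiently small depending on $\Omega$, $\beta$, and $g$; the non-vanishing of the Weyl tensor is not needed for this step, since the negativity of both $R_{g}$ and $-\beta$ already supplies the strictly negative leading correction.
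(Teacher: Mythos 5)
Your plan captures the scaffolding of the Appendix A argument (normal coordinates, scaling $y = x/\sqrt{\epsilon}$, separate expansions of the three ingredients), and your observation that the Weyl hypothesis is not actually used in this particular lemma is correct. But there is a genuine gap in the sign argument, and it sits exactly where the paper spends most of its effort.

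You write off the metric corrections to $\lVert \nabla_{g} u_{\epsilon,\Omega} \rVert_{\calL^{2}(\Omega,g)}^{2}$ as harmless order-$\epsilon$ error, and then declare the surviving order-$\epsilon$ correction to be $\frac{R_{g}(0)-\beta}{a}\frac{K_{3}}{K_{2}}\,\epsilon$, negative because $R_{g}(0) < 0$ and $\beta > 0$. This is not what happens. The volume element $\sqrt{\det g} = 1 - \tfrac{1}{6}\mathrm{Ric}(y,y) + \cdots$ inserted into the gradient term produces, after averaging over spheres, a contribution
\[
A_{1,3} = -\frac{R_{g}(0)(n-2)^{2}}{6n}\int_{\Omega}\frac{\varphi_{\Omega}^{2}\lvert y\rvert^{4}}{(\epsilon + \lvert y\rvert^{2})^{n}}\,dy,
\]
which for $R_{g}(0) < 0$ is \emph{positive} and scales precisely as $\epsilon^{(4-n)/2}$ -- the same order as your potential term. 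These two $R_{g}(0)$-dependent corrections compete, and a priori the sum could have either sign. The actual content of the appendix is that they cancel \emph{exactly}: the paper applies the Euclidean identity $\Delta\bigl((1+\lvert y\rvert^{2})^{2-n}\bigr) = 2(n-2)^{2}(\lvert y\rvert^{2} - \tfrac{n}{n-2})(1+\lvert y\rvert^{2})^{-n}$, integration by parts against $\lvert y\rvert^{2}$, and the Beta/Gamma identities relating $\int \lvert y\rvert^{2}(1+\lvert y\rvert^{2})^{-n}$ to $\int (1+\lvert y\rvert^{2})^{2-n}$ and to $T K_{2}^{-2/(n-2)}$, culminating in $(n-2) - T K_{2}^{-2/(n-2)}/n = 0$. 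After this cancellation the only surviving correction at order $\epsilon$ (or $\epsilon\lvert\log\epsilon\rvert$ when $n=4$) is $-\frac{\beta}{a}\frac{K_{3}}{K_{2}}$, coming from $\beta$ alone. Said differently: in this lemma $R_{g} < 0$ ensures certain intermediate quantities have a definite sign and allows crude one-sided estimates, but it does not supply any net negativity at the leading correction order; that role belongs entirely to $\beta$. Without carrying out the cancellation you have no way to conclude the coefficient of $\epsilon$ in $Q_{\epsilon,\Omega} - T$ is negative, so the final inequality does not follow from what you wrote.

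One secondary point: you also need to track the truncation terms (denoted $\Sigma_{1},\Sigma_{2}$ in the paper) which arise because $\varphi$ cuts the integrals off at radius $\tfrac{r}{2}\epsilon^{-1/2}$; they contribute at order $\epsilon$ too, and verifying they do not spoil the inequality is a second place where the Gamma-function identity is used. Calling them part of the "$o(1)$" is again not justified without this computation.
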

\begin{remark}\label{per:re1}
The theorem will be proved in Appendix A in two cases: (i) $ n \geqslant 5 $; (ii) $ n = 4 $. The idea and the calculation are essentially due to Brezis and Nirenberg \cite{Niren3} for open subset of $ \R^{n} $ with Euclidean metric. They indicated that we need a separate technique for $ n = 4 $ since $ K_{3} $ is not integrable at this dimension. 

However the calculation is more complicated for the Riemannian domain. We have extra terms with respect to the Riemannian metric so we have to handle more terms in $ Q_{\epsilon, \Omega}(u) $. Even with the help of the perturbed term $ \beta u $, we still need to find a way to apply Aubin's test function in low dimensional cases, especially for dimensions $ 4 $ and $ 5 $. We point out that the observation we mentioned in \S1 is exactly used in (\ref{APP:EXT1}). We do need $ R_{g} < 0 $, which plays a similar role as the ``correction term" in the estimates of Brezis and Nirenberg. 

The best Sobolev constant $ T $ is related to Gamma functions. In our estimates, we have to use the properties of Beta and Gamma functions to obtain the strict inequality in (\ref{per:eqnext2}), which is essentially due to the extra terms by the Riemannian metric.
\end{remark}

With the key estimate in Lemma \ref{per:lemma1}, we can prove the following result for the existence of a smooth, positive solution of (\ref{per:eqn1}).
\begin{proposition}\label{per:prop1}
Let $ (\Omega, g) $ be a small enough, not locally conformally flat Riemannian domain in $\R^n$ with $C^{\infty} $ boundary such that $ {\rm Vol}_g(\Omega) $ and the Euclidean diameter of $\Omega$ sufficiently small, $ n \geqslant 4 $. If $ R_{g} < 0 $ within the small enough closed domain $ \bar{\Omega} $ on which the metric $ g $ has a global parametrization, then the Dirichlet problem (\ref{per:eqn1}) has a real, positive, smooth solution $ u \in \calC^{\infty}(\Omega) \cap H_{0}^{1}(\Omega, g) \cap \calC^{0}(\bar{\Omega}) $ for any $ \beta, \lambda > 0 $.
\end{proposition}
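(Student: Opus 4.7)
The plan is to apply Wang's theorem (Theorem \ref{per:thm1}) to the functional $J^*$ from (\ref{per:eqn6}), whose Euler--Lagrange equation is precisely (\ref{per:eqn1}). The section has already reduced the proposition to two verifications: that the coefficient triple (\ref{per:eqn7}) satisfies hypotheses (P1)--(P4) (plus the sign and continuity conditions) of Theorem \ref{per:thm1}, and that $\kappa<\kappa_0$, which by the computation leading to (\ref{per:eqn10})--(\ref{per:eqn11}) is equivalent to exhibiting some positive $u\in\calC_c^\infty(\Omega)$ with $J_0(u)<aT$ in (\ref{per:eqn12}).

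First I would dispatch the Wang hypotheses. Uniform ellipticity (P1) for $a_{ij}=a\sqrt{\det g}\,g^{ij}$ is automatic on $\bar\Omega$ since $g$ is a smooth Riemannian metric on a compact set. For $f(x,u)=\sqrt{\det g}(-R_g+\beta)u$, condition (P2) is trivial because $f$ is linear in $u$ while $p-1>1$, and (P3) reduces to the pointwise bound $\sqrt{\det g}(-R_g+\beta)<\lambda_1$, the first Dirichlet eigenvalue of $L$ on $\Omega$; since $\lambda_1\to\infty$ as the Euclidean diameter of $\Omega$ shrinks (by domain monotonicity together with the two-sided comparison of the Riemannian and Euclidean Rayleigh quotients on small balls), this holds for $\Omega$ small enough. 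For (P4), note that $b(x)=\lambda\sqrt{\det g(x)}$ is bounded below by a positive constant on $\bar\Omega$, so the sublevel set $\Omega(\sigma)$ is empty for small $\sigma$ and (P4) holds vacuously. The sign condition $f(x,u)\ge 0$ for $u\ge 0$ uses $R_g<0$ and $\beta>0$; $f$ is extended by zero for $u\le 0$, and $a_{ij}\in\calC^0(\bar\Omega)$ is immediate.

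Next I would invoke Lemma \ref{per:lemma1} to select $\epsilon=\epsilon(\Omega,\beta,g)>0$ such that the Aubin test function $u_{\epsilon,\Omega}=\phi_{\epsilon,\Omega}$ from (\ref{per:eqnext3}) satisfies $Q_{\epsilon,\Omega}<T$. Multiplying through by $a$ identifies this with $J_0(u_{\epsilon,\Omega})<aT$, so the reduction recorded in (\ref{per:eqn10})--(\ref{per:eqn11}) yields $\sup_{t>0}J^*(tu_{\epsilon,\Omega})<\kappa_0$, whence $\kappa<\kappa_0$. Theorem \ref{per:thm1} then delivers a solution $u\in\calC^\infty(\Omega)\cap\calC^0(\bar\Omega)$ of (\ref{per:eqn1}) with $u>0$ in $\Omega$ and $u=0$ on $\partial\Omega$. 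Since $u$ is continuous up to the boundary and vanishes there, the $H^1$ bound supplied by $J^*(u)\le\kappa$ together with the matching trace gives $u\in H_0^1(\Omega,g)$, completing all the regularity asserted in the statement.

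The main obstacle is packaged entirely into Lemma \ref{per:lemma1}: producing the strict inequality (\ref{per:eqnext2}) in dimensions as low as four without appealing to conformal normal coordinates. The role of the perturbation $\beta u$, together with the sign hypothesis $R_g<0$, is precisely to absorb the low-order error terms arising from the Riemannian metric in the Brezis--Nirenberg style expansion of the Yamabe quotient, and the remark following the lemma flags that $n=4$ requires separate bookkeeping because $K_3$ diverges at that dimension. Once Lemma \ref{per:lemma1} is in hand, the present proposition becomes the bookkeeping argument above, with the smallness hypotheses on $\Omega$ needed for (P3) and for the lemma chosen compatibly.
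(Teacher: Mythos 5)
Your proposal is correct and follows essentially the same route as the paper: reduce to Wang's Theorem~\ref{per:thm1} via the functional $J^*$, and discharge the key inequality $\kappa<\kappa_0$ by Lemma~\ref{per:lemma1}. You spell out the verification of hypotheses (P1)--(P4) more carefully than the paper, which simply asserts they hold trivially; otherwise the arguments are the same.
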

\begin{proof}
Without loss of generality, we may assume that $ 0 \in \Omega $. Applying the Lemma \ref{per:lemma1}, we conclude that for fixed $ \beta, \lambda > 0 $, we can choose an appropriate small ball $ B_{0}(r) $ and an appropriate $ \epsilon $ depending on $ \beta, \Omega, g $ such that (\ref{per:eqn12}) holds for Aubin's test function (\ref{per:eqnext3}). By Theorem \ref{per:thm1}, (\ref{per:eqn1}) admits a smooth, positive solution. By the standard elliptic regularity and boundary regularity, we conclude that $ u \in \calC^{\infty}(\Omega) \cap H_{0}^{1}(\Omega, g) \cap \calC^{0}(\bar{\Omega}) $.
 
The existence of a solution with arbitrary $ \beta, \lambda > 0 $ comes from the scaling of the metric $ g $ or the scaling of the solution $ u $.
\end{proof}
\medskip

\section{The Local Yamabe Equation with Dirichlet Boundary Condition}
In this section, we would like to remove the correction term $ \beta u $. We show that when Riemannian domain is at least dimension $ 4 $ and is not locally conformally flat, the correction term is no longer required, provided that $ R_{g} < 0 $ everywhere on $ \bar{\Omega} $ and $ g $ has a global parametrization on $ \Omega $, i.e. we show that the following partial differential equation
\begin{equation}\label{PDE0}
-a\Delta_{g} u + R_{g} u = \lambda u^{p-1} \; {\rm in} \; \Omega, u = 0 \; {\rm on} \; \partial \Omega
\end{equation}
admits a smooth, positive solution in $ \Omega $ for any $ \lambda > 0 $.

We use the same variational principle given in Theorem \ref{per:thm1}, but we need to modify our local estimate in \S2. By the same argument carried in \S2, it is straightforward to see that the hypotheses in Theorem \ref{per:thm1} holds if and only if
\begin{align*}
J_{1}(u) & : = \frac{a\int_{\Omega} \sqrt{\det(g)} g^{ij} \partial_{i} u \partial_{j} u dx + \int_{\Omega} \sqrt{\det(g)} R_{g} u^{2} dx}{\left( \int_{\Omega} \sqrt{\det(g)} u^{p} dx \right)^{\frac{2}{p}}} \\
& = \frac{a\int_{\Omega} \lvert \nabla_{g}u \rvert^{2} \dvol + \int_{\Omega}  R_{g} u^{2} \dvol}{\left( \int_{\Omega}  u^{p} \dvol \right)^{\frac{2}{p}}} \\
& \qquad <  aT.
\end{align*}
for some compactly supported, positive, smooth test function on $ \Omega $. We are testing exactly the Yamabe quotient locally.  We relabel $ J_{0} $ by $ J_{0, \beta, \Omega} : = J_{0} $ to emphasize that the gap between $ J_{0, \beta, \Omega} $ and $ aT $ is depending on $ \beta, \Omega $. Or equivalently, if we can show that there exists a test function $ \phi $ with a fixed support such that
\begin{equation*}
J_{0, \beta, \Omega}(\phi)  < aT
\end{equation*}
uniformly for all $ \beta > 0 $, then a standard limiting argument implies $ J_{1}(u) < aT $ also.
\begin{proposition}\label{PDE:prop1} Let $ \Omega_{d} : = B_{0}(\frac{d}{2}) $ for some small enough $ d > 0 $ with $ \dim \Omega \geqslant 4 $. Assume $ R_{g} < 0 $ on $ \bar{\Omega_{d}} $ and the Weyl tensor at $ 0 $ is nontrivial. Fix a $ \beta_{0} > 0 $, then for all $ \beta \in [0, \beta_{0}] $, there exists a test function, compactly supported in $ \Omega_{d} $, such that
\begin{equation}\label{PDE0a}
J_{0, \beta, \Omega_{d}}(\phi) \leqslant J_{0, \beta_{0}, \Omega_{d}}(u_{\beta_{0}, \epsilon, \Omega_{d}}) < aT.
\end{equation}
Here $ u_{\beta_{0}, \epsilon, \Omega_{d}} $ is the test function given in Theorem \ref{APP:thm1} for $ \beta_{0} $. 
\end{proposition}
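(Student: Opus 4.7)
The plan is to exploit monotonicity in $\beta$ to reduce the statement to the single case $\beta=0$, and then to construct the corrected test function via the auxiliary linear PDE flagged in the introduction. First, for any positive $\phi \in H_{0}^{1}(\Omega_{d}, g)$ compactly supported in $\Omega_{d}$, the numerator of $J_{0,\beta,\Omega_{d}}(\phi)$ equals
\begin{equation*}
a\int_{\Omega_{d}} \lvert \nabla_{g} \phi \rvert^{2} \dvol + \int_{\Omega_{d}} R_{g} \phi^{2} \dvol - \beta \int_{\Omega_{d}} \phi^{2} \dvol,
\end{equation*}
which is strictly decreasing in $\beta$, while the denominator is independent of $\beta$. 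Hence $\beta \mapsto J_{0,\beta,\Omega_{d}}(\phi)$ is strictly decreasing, and its supremum over $\beta \in [0, \beta_{0}]$ is attained at $\beta=0$. It therefore suffices to exhibit a single test function $\phi$, compactly supported in $\Omega_{d}$, satisfying $J_{0,0,\Omega_{d}}(\phi) \leqslant J_{0,\beta_{0},\Omega_{d}}(u_{\beta_{0},\epsilon,\Omega_{d}})$, because the right-hand side is already $< aT$ by Lemma \ref{per:lemma1} applied with parameter $\beta_{0}$.

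To build $\phi$, I would take $u := u_{\beta_{0},\epsilon,\Omega_{d}}$, the Aubin-type test function from Lemma \ref{per:lemma1}, and, for a small parameter $\gamma > 0$ to be optimized, solve the auxiliary linear Dirichlet problem
\begin{equation*}
-a\Delta_{g} v - R_{g} v = a\Delta_{g} u + R_{g} u - R_{g} u^{1-\gamma} \; \text{in} \; \Omega_{d}, \qquad v = 0 \; \text{on} \; \partial \Omega_{d}.
\end{equation*}
Because $R_{g} < 0$ on $\bar{\Omega}_{d}$, the operator $-a\Delta_{g} - R_{g}$ is strictly coercive on $H_{0}^{1}(\Omega_{d}, g)$, and this problem admits a unique smooth $v$ by standard linear elliptic theory together with Schauder regularity. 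The new test function $\phi$ is then assembled from $u$ and $v$ as in formula (\ref{PDE:EXT}); the role of $v$ is to supply a Dirichlet-compatible correction which mimics, at the level of the quadratic form, the missing $-\beta_{0} u^{2}$ term.

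The verification $J_{0,0,\Omega_{d}}(\phi) \leqslant J_{0,\beta_{0},\Omega_{d}}(u)$ proceeds by direct computation: expand $a\int \lvert \nabla_{g} \phi \rvert^{2} \dvol + \int R_{g} \phi^{2} \dvol$ in $u$ and $v$, integrate by parts, and substitute the auxiliary PDE. The cross terms collapse, and the source term $-R_{g} u^{1-\gamma}$ is calibrated so that its contribution is, to leading order as $\epsilon \to 0$, at least as negative as $-\beta_{0} \int u^{2} \dvol$. The denominator $(\int \phi^{p} \dvol)^{2/p}$ is compared to $(\int u^{p} \dvol)^{2/p}$ using the sign of $v$ and the Sobolev embedding $H^{1} \hookrightarrow \calL^{p}$ on $(\bar{\Omega}_{d}, g)$. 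Combining these with the quantitative gap already produced by Lemma \ref{per:lemma1} yields the desired inequality, and monotonicity in $\beta$ then propagates it to all of $[0, \beta_{0}]$.

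The main obstacle will be the fine balance in the blow-up asymptotics. The correction from $v$ must generate enough negative mass to replace $\beta_{0} \int u^{2} \dvol$, yet $a\int \lvert \nabla_{g} v \rvert^{2} \dvol$ and the various cross error terms must stay at a strictly lower order in $\epsilon$. Choosing $\gamma = \gamma(\beta_{0}, d, g)$ correctly — and exploiting the non-vanishing Weyl tensor at the origin, which is what powers the quantitative gap in Lemma \ref{per:lemma1} in dimensions $4$ and $5$ — will be the delicate step. The hypothesis $R_{g} < 0$ on $\bar{\Omega}_{d}$ is used twice: once for coercivity of the auxiliary PDE, and once to fix the sign of the corrective contribution.
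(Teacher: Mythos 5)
Your monotonicity observation is correct and a genuine simplification: for fixed $\phi$ the numerator of $J_{0,\beta,\Omega_d}(\phi)$ contains $-\beta\int_{\Omega_d}\phi^2\dvol$ and is therefore strictly decreasing in $\beta$, while the denominator is $\beta$-independent, so a bound at $\beta=0$ propagates to all of $[0,\beta_0]$. The paper does not use this; it instead carries the $\beta$-dependent cross terms through the whole expansion and proves the margin constant is uniform in $\beta$, a bookkeeping your reduction would eliminate. The skeleton you propose — Aubin function $u$, auxiliary Dirichlet problem for $v$ with source $-R_g u^{1-\gamma}$, corrected test function $\phi$ as in (\ref{PDE:EXT}) — matches the paper's.

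The verification as you sketch it, though, has a real gap. After substituting the auxiliary PDE by pairing it with $v$ and integrating by parts, only the Dirichlet cross term $a\int\nabla_g u\cdot\nabla_g v\dvol$ cancels; the quadratic $v$-terms do not "collapse" and emerge with the wrong sign, leaving a $-a\int|\nabla_g v|^2\dvol$ contribution that must be controlled. The paper handles this with a dichotomy you omit entirely: either $|v|$ already has Yamabe quotient below $T$, in which case one simply takes $\phi:=|v|$, or else the reverse inequality $a\|\nabla_g|v|\|_{L^2}^2 + \int_{\Omega_d}R_g v^2\dvol \geqslant aT\|v\|_{L^p}^2$ holds and is precisely what absorbs the dangerous $v$-Dirichlet piece. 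Without this step the expansion does not close. Your intuition that $-R_g u^{1-\gamma}$ is calibrated to "generate negative mass at least as large as $-\beta_0\int u^2$" is also not the mechanism: the point of the source is that $w:=u+v$ solves $-a\Delta_g w - R_g w = -R_g u^{1-\gamma}$, whence injective elliptic $W^{2,q}$-regularity combined with Poincar\'e, Sobolev embedding and H\"older give the smallness $\|w\|_{L^p}\lesssim d^{2+\frac{n-2}{2}\gamma}\|u\|_{L^p}^{1-\gamma}$ and analogous $L^2$- and $\int(-R_g)(\cdot)^2$-closeness bounds, all powers of the diameter $d$. Those powers are pitted against the perturbed-quotient margin $\sim Bd^2$ (or $B_0 d^2|\log d|$ when $n=4$, a case you do not address separately) via Hypotheses I and II, which fix the smallness of $d$. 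Finally, the order of choices is reversed in your sketch: $\gamma$ is fixed once and for all with $(n-2)\gamma\ll1$, then $d$, then $\epsilon$ (with $\beta_0=\alpha\epsilon^{1/2}$ and $\epsilon^{(n-2)\gamma}\gg\epsilon$), rather than optimizing $\gamma$ as a function of $\beta_0$ and $d$.
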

The purpose of the test function is to dampen the ``blow-up" characterized by the Green's function of the conformal Laplacian.  We need a new test function to control the ``blow-up" for dimensions 5 and 6. The new test function must be associated with the original test function and the conformal Laplacian. To this extent, we need some preparation--the ``super-local analysis"--of some partial differential equation containing the conformal Laplacian and the original test function, which will give us the new test function before we prove the Proposition \ref{PDE:prop1}.
\medskip

\subsection{Super-Local Analysis}
We reserve the notation of $ \Omega_{d} $ as above, hence $ {\rm diam} \Omega_{d} = d $. The original domain is labeled by $ \Omega $. In this section, we consider the following partial differential equation
\begin{equation}\label{PDE1}
-a\Delta_{g} v - R_{g}  v = a\Delta_{g} u + R_{g}  u - R_{g} u^{1 - \gamma} \; {\rm in} \; \Omega_{d}, v \equiv 0 \; {\rm on} \; \partial \Omega_{d}.
\end{equation}
on some small enough Riemannian domain $ (\Omega_{d}, g) $ with smooth boudary $ \partial \Omega_{d} $. The domain $ \Omega_{d} $ is obtained by shrinking (restricting) the original domain $ \Omega $ by shrinking the diameter $ {\rm diam} \Omega_{d} : = d $ of the ball. Recall that $ p = \frac{2n}{n - 2} $ is a reserved number. Here we require $ \gamma \in (0, 1) $ to be a constant such that
\begin{equation}\label{Fix:eqn0}
(n - 2)\gamma \ll 1.
\end{equation}
{\it{Fix this $ \gamma $ from now on}}. 
\medskip

Throughout this section, we only assume
\begin{enumerate}[(i).]
\item (i) $ R_{g} \leqslant A_{0} \leqslant - 1 $ on $ \Omega $ for some constant $ A_{0} $ and hence the same holds for all $ \Omega_{d} $; 
\item (ii) $ u \in \calC_{c}^{\infty}(\Omega_{d}) $, $ u > 0 $ in each (possibly shrinking) $ \Omega_{d} $ , i.e. a positive smooth function that vanishes on $ \partial \Omega $;
\item (iii) $ (\Omega, g) $ is chosen to be small enough a priori such that there exists a fixed $ \eta $ satisfying
\begin{align*}
\lVert u \rVert_{W^{2, q}(\Omega, g)} & \leqslant (1 + \eta) \lVert u \rVert_{W^{2, q}(\Omega, g_{e})}, \forall q \in [1, p]; \\
\lVert u \rVert_{W^{2, q}(\Omega, g_{e})} & \leqslant ( 1 + \eta) \lVert u \rVert_{W^{2, q}(\Omega, g)}, \forall q \in [1, p].
\end{align*}
Here $ g_{e} $ is the Euclidean metric.
\end{enumerate}
Assumption (iii) follows easily from the facts that in local coordinates given by $ x = (x_{1}, \dotso, x_{n}) $,
\begin{align*}
g_{ij} & = \delta_{ij} + O(\lvert x \rvert^{2}), g^{ij} = \delta_{ij} + O(\lvert x \rvert^{2}), \sqrt{\det{g}} = 1 + O(\lvert x \rvert^{2}) \\
\lvert \nabla_{g}^{j} u \rvert^{q} & : = \left( \lvert \nabla_{g}^{j} u \rvert^{2} \right)^{\frac{q}{2}} = \left( \nabla^{i_{1}} u \dotso \nabla^{i_{l}} u \nabla_{i_{1}} u \dotso \nabla_{i_{l}} u = g^{i_{1}j_{1}} \dotso g^{i_{l}j_{l}} \nabla_{j_{1}} \dotso u \nabla_{j_{l}} u \nabla_{i_{1}} u \dotso \nabla_{i_{l}} u \right)^{\frac{q}{2}}, \\
& \qquad q \in [1, p], \forall l \in \mathbb{N}.
\end{align*}
It follows that for any $ q \in [1, p] $, $ \lVert u \rVert_{W^{2, q}(\Omega, g)} $ that involving terms $ \lvert u \rvert^{q}, \lvert \nabla_{g} u \rvert^{q} $ and $ \lVert \nabla_{g}^{2} u \rvert^{q} $ and $ \sqrt{\det{g}} $ differs with $ \lVert u \rVert_{W^{2, q}(\Omega, e)} $ at most $ O(\lvert x \rvert^{2}) $ when $ \Omega $ is contianed in some local chart, and is therefore bounded above by some fixed constant. The Assumption (iii) then follows from the fact that the geodesic ball with small enough radius is very close to the Euclidean ball. We point out that the same $ \eta $ works for all smaller domain $ \Omega_{d} $ in the shrinking sense. In this section, $ u $ will be the fixed given function with Assumption (i)-(iii), and $ v $ will be the solution in (\ref{PDE1}). Note that for clarification $ u $ and $ v $ in (\ref{PDE1}) depend on the domain $ \Omega_{d} $, but the functional analysis like the Sobolev embedding, elliptic regularity, etc., are independent of the functions $ u, v $.
\medskip

The main purpose of this section is to show that when $ (\Omega_{d}, g) $ is small enough both in volume and in diameter by further shrinking the domain, the $ \calL^{2} $- and $ \calL^{p} $- norms of $ u, v $ are comparable, and the closeness between norms are determined by $ \left({\rm diam \Omega_{d}} \right)^{\alpha} : = d^{\alpha} $ for some $ \alpha \in \R^{>0} $. {\it{We call this super-local analysis}}. This serves as a preparation of the main result we need in the next section. All results below holds for every $ \Omega_{d} $, provided that assumptions (i) - (iii) hold.
\medskip

We examine some basic facts of the equation (\ref{PDE1}). Given $ u \in \calC_{c}^{\infty}(\Omega_{d}) $ and any $ \gamma \in (0, 1) $, the right hand side of (\ref{PDE1}) is continuous in $ \bar{\Omega_{d}} $, it follows that $ v \in W^{2, q}(\Omega_{d}, g) $ for all $ q \in [1, \infty) $ provided the existence of the solution. Clearly $ v \not\equiv 0 $ by the naturality of the differential equation. For any $ \gamma \in (0, 1) $, the existence of the solution is due to the standard Lax-Milgram theorem.

Adding $ -a\Delta_{g} u - R_{g} u $ on both sides of (\ref{PDE1}), we have
\begin{equation}\label{PDE2}
-a\Delta_{g} ( u + v) - R_{g} (u + v) = - R_{g} u^{1 - \gamma} \; {\rm in} \; \Omega_{d}, u + v \equiv 0 \; {\rm on} \; \partial \Omega_{d}.
\end{equation}
By Assumption (i), $ - R_{g}  > 0 $ on $ \Omega_{d} \subset \Omega $; with Assumption (ii), we conclude by maximum principle that
\begin{equation}\label{PDE3}
u + v > 0 \; {\rm in} \; \Omega_{d}.
\end{equation}
\medskip

We consider some Sobolev-type inequalities. The standard $ L^{q} $-Poincar\'e inequality in Euclidean norm is given by
\begin{equation*}
\lVert f \rVert_{\calL^{q}(\Omega_{d}, g_{e})} \leqslant C_{1}'(n, q) \cdot d \cdot \lVert \nabla_{g_{e}} f \rVert_{\calL^{q}(\Omega_{d}, g_{e})}, f \in W^{1, q}(\Omega_{d}, g_{e}), \forall q \in (1, \infty).
\end{equation*}
Note that the constant $ C_{1}'(n, q) $ depends only on the dimension of $ \Omega_{d} $ and $ q $, independent of the size of the domain and $ f $. It follows from Assumption (iii) that,
\begin{equation}\label{PDE4}
\lVert f \rVert_{\calL^{q}(\Omega_{d}, g)} \leqslant C_{1}(n, q) \cdot d \cdot \lVert \nabla_{g} f \rVert_{\calL^{q}(\Omega_{d}, g)}, f \in W^{1, q}(\Omega_{d}, g), \forall q \in (1, \infty).
\end{equation}
We point out that the constant $ C_{1}(n, q) $ is independent of the size of $ \Omega_{d} $ and $ f $.

With
\begin{align*}
\frac{1}{q_{1}} - \frac{2}{n} & = \frac{1}{p} - \frac{1}{n} \Rightarrow q_{1} = 2; \\
\frac{1}{q_{2}} - \frac{2}{n} & = \frac{1}{2} - \frac{1}{n} \Rightarrow q_{2} = \frac{2n}{n + 2}; \\
\end{align*}
We introduce two Sobolev inequalities from the Sobolev embedding,
\begin{equation}\label{PDE5}
\begin{split}
\lVert f \rVert_{W^{1, p}(\Omega_{d}, g)} & \leqslant C_{2} \lVert f \rVert_{W^{2, q_{1}}(\Omega_{d}, g)}, \forall f \in W^{2, q_{1}}(\Omega_{d}, g); \\
\lVert f \rVert_{W^{1, 2}(\Omega_{d}, g)} & = \lVert f \rVert_{H^{1}(\Omega_{d}, g)} \leqslant C_{2}' \lVert f \rVert_{W^{2, q_{2}}(\Omega_{d}, g)}, \forall f \in W^{2, q_{2}}(\Omega_{d}, g).
\end{split}
\end{equation}
Both inequalities in (\ref{PDE5}) can be easily obtained from associated Euclidean version and Assumption (iii). 
\medskip

Due to later application, we recall the estimates of the perturbed Yamabe quotient in \S2. Note that when we shrink the size of the domain from $ \Omega $ to $ \Omega_{d} $, the classical Aubin's test function changes from $ u_{\beta_{0}, \epsilon, \Omega} $ to $ u_{\beta_{0}, \epsilon d^{2}, \Omega_{d}} $, i.e. for smaller domain, we need a smaller $ \epsilon $ in Aubin's test function (\ref{APP:eqnt0}) to preserve the estimate in (\ref{APP:eqn1}). From the argument in Theorem 2.1, we observe that
\begin{equation}\label{PDE6}
\begin{split}
& \frac{a \int_{\Omega_{d}} \nabla_{g} u_{\beta_{0}, \epsilon d^{2}, \Omega_{d}} \cdot \nabla_{g}u_{\beta_{0}, \epsilon d^{2}, \Omega_{d}} \dvol + \int_{\Omega_{d}} \left( R_{g} - \beta_{0} \right) u_{\beta_{0}, \epsilon d^{2}, \Omega_{d}}^{2} \dvol}{aT\lVert u_{\beta_{0}, \epsilon d^{2}, \Omega_{d}} \rVert_{\calL^{p}(\Omega_{d}, g)}}  \\
& \qquad = 1 - \frac{1}{aT} \cdot \epsilon \beta_{0} \frac{K_{3}}{K_{2}} \left( 1 - O\left(\epsilon^{\frac{1}{2}} \right) \right) d^{2} : = 1 - Bd^{2}, n \geqslant 5 \\
& \frac{a \int_{\Omega_{d}} \nabla_{g} u_{\beta_{0}, \epsilon d^{2}, \Omega_{d}} \cdot \nabla_{g}u_{\beta_{0}, \epsilon d^{2}, \Omega_{d}} \dvol + \int_{\Omega_{d}} \left( R_{g} - \beta_{0} \right) u_{\beta_{0}, \epsilon d^{2}, \Omega_{d}}^{2} \dvol}{aT\lVert u_{\beta_{0}, \epsilon d^{2}, \Omega_{d}} \rVert_{\calL^{p}(\Omega_{d}, g)}} = 1 - B_{0} d^{2} \lvert \log d \rvert, n = 4
\end{split}
\end{equation}
\begin{remark}\label{PDEre1}
Here the constant $ B $ is fixed for the fixed function $ u_{\beta_{0}, \epsilon, \Omega} $ with respect to the original geodesic ball $ \Omega = B_{d_{0}}(0) $ and fixed background metric $ g $, and is independent of the size of $ \Omega_{d} $. In (\ref{PDE6}), it follows that we may choose $ \epsilon $ such that $ \beta_{0} = \alpha \cdot \epsilon^{\frac{1}{2}} $ with some $ \alpha > 1 $.

Similarly, the constant $ B_{0} $ for $ n = 4 $ is independent of $ \Omega_{d} $, only depends on $ g $, the original domain $ \Omega $, original choices of $ \beta_{0} $ and $ \epsilon $. The rate of decreasing is slightly slower than $ d^{2} $. 

Since the (perturbed) Yamabe quotient is invariant under the constant multiple of the test function, the same $ B $ and $ B_{0} $ apply for $ Ku_{\beta_{0}, \epsilon d^{2}, \Omega_{d}}, \forall K \in \R^{>0} $.
\end{remark} 
\medskip

We now give relations between $ \calL^{p} $-norms of $ u $ and $ v $. Apply the $ L^{q_{1}} $-elliptic regularity for the equation (\ref{PDE2}),
\begin{equation*}
\lVert u + v \rVert_{W^{2, q_{1}}(\Omega_{d}, g)} \leqslant C_{3} \lVert u^{1 - \gamma} \rVert_{\calL^{q_{1}}(\Omega_{d}, g)}
\end{equation*}
for some constant $ C_{3} $ that is independent of $ u, v $. We do not have the $ \lVert u + v \rVert_{\calL^{q_{1}}(\Omega_{d}, g)} $ term on the right hand side since the operator $ -a\Delta_{g} - R_{g} $ is an injective elliptic operator. Note that the same $ C_{3} $ works when we shrink $ \Omega_{d} $. Recall that $ q_{1} = 2 $, we apply Sobolev embedding in (\ref{PDE5}), $ \calL^{q_{1}} $-Poincar\'e inequality in (\ref{PDE4}) and H\"older's inequality,
\begin{equation}\label{PDE7}
\begin{split}
\lVert u + v \rVert_{\calL^{p}(\Omega_{d}, g)} & \leqslant C_{1}(n, p) d \lVert \nabla_{g} ( u + v) \rVert_{\calL^{p}(\Omega_{d}, g)} \leqslant C_{1}(n, p) d \lVert u + v \rVert_{W^{1, p}(\Omega_{d}, g)} \\
& \leqslant C_{1}(n, p) C_{2} d \lVert u + v \rVert_{W^{2, q_{1}}(\Omega_{d}, g)} \leqslant C_{1}(n, p) C_{2} C_{3} d \lVert u^{1 - \gamma} \rVert_{\calL^{2}(\Omega_{d}, g)} \\
& = C_{1}(n, p)C_{2}C_{3} d \left( \int_{\Omega_{d}} u^{2(1 - \gamma)} \dvol \right)^{\frac{1}{2}} \\
& \leqslant C_{1}(n, p)C_{2}C_{3} d \left( \left( \int_{\Omega_{d}} u^{2(1 - \gamma) \cdot \frac{1}{1 - \gamma}} \dvol \right)^{1 - \gamma} \left({\rm Vol}_{g}\Omega_{d}\right)^{\gamma} \right)^{\frac{1}{2}} \\
& \leqslant : C_{3}' d^{1 + \frac{n\gamma}{2}} \lVert u \rVert_{\calL^{2}(\Omega_{d}, g)}^{1 - \gamma} \leqslant C_{3}' d^{2 + \frac{n-2}{2} \gamma} \lVert u \rVert_{\calL^{p}(\Omega_{d}, g)}^{1 - \gamma}.
\end{split}
\end{equation}
The first inequality in the last line is due to the fact that the volume of the geodesic ball is almost the same as the volume of the Euclidean ball with the same radius when $ \Omega_{d} $ is small by Assumption (iii), otherwise we take further shrinking. The constant $ C_{3}' $ is independent of $ u, v $ and the size of $ \Omega_{d} $. We now impose the first restriction on the size of $ \Omega_{d} $. 

{\bf{Hypothesis I}}. We shrink $ \Omega_{d} $ to be small enough such that
\begin{equation}\label{PDE8}
\begin{split}
d \ll1 & \Rightarrow C_{3}'d^{2 + \frac{n - 2}{2} \gamma} \ll 1 \; {\rm and} \; {\rm Vol}_{g} \Omega_{d} < 1 \\
& \Rightarrow \left( 1 - Bd^{2} \right)  - \left( 1 -  C_{3}' d^{2 + \frac{n -2}{2} \gamma} \right)^{2} \leqslant -B( 1 - \delta) d^{2}
\end{split}
\end{equation}
for some $ \delta \ll 1 $ when $ d \ll 1 $.
\medskip

Next we examine the relation between $ \calL^{2} $-norms of $ u $ and $ v $, and the $ \calL^{2} $-norms of $ \nabla_{g} u $ and $ \nabla_{g} v $. Apply the $ L^{q_{2}} $-elliptic regularity for the equation (\ref{PDE2}) with the same injective second order elliptic operator,
\begin{equation*}
\lVert u + v \rVert_{W^{2, q_{2}}(\Omega_{d}, g)} \leqslant C_{4} \lVert u^{1 - \gamma} \rVert_{\calL^{q_{2}}(\Omega_{d}, g)}
\end{equation*}
for some constant $ C_{4} $ that is independent of $ u, v $. Again the same $ C_{4} $ works when we shrink $ \Omega_{d} $. Recall that $ q_{2} = \frac{2n}{n + 2} $, we apply Sobolev embedding in (\ref{PDE5}), Poincar\'e inequality in (\ref{PDE4}) and H\"older's inequality,
\begin{equation}\label{PDE9}
\begin{split}
\lVert u + v \rVert_{\calL^{2}(\Omega_{d}, g)} & \leqslant C_{1}(n, 2) d \lVert \nabla_{g} ( u + v) \rVert_{\calL^{2}(\Omega_{d}, g)} \leqslant C_{1}(n, 2) d \lVert  u + v \rVert_{W^{1, 2}(\Omega_{d}, g)} \\
& \leqslant C_{1}(n, 2) C_{2}' d \lVert u + v \rVert_{W^{2, q_{2}}(\Omega_{d}, g)} \leqslant C_{1}(n, 2) C_{2}' C_{4} d \lVert u^{1 - \gamma} \rVert_{\calL^{\frac{2n}{n + 2}}(\Omega_{d}, g)} \\
& = C_{1}(n, 2)C_{2}'C_{4} d \left( \int_{\Omega_{d}} u^{\frac{2n(1 - \gamma)}{n + 2}} \dvol \right)^{\frac{n + 2}{2n}} \\
& \leqslant C_{1}(n, 2)C_{2}'C_{4} d \left( \left( \int_{\Omega_{d}} u^{\frac{2n(1 - \gamma)}{n + 2} \cdot \frac{n + 2}{n(1 - \gamma)}} \dvol \right)^{\frac{n + 2}{n(1 - \gamma)}} \left({\rm Vol}_{g}\Omega\right)^{\frac{2 + n\gamma}{n + 2}} \right)^{\frac{n + 2}{2n}} \\
& \leqslant : C_{4}' d^{2 + \frac{n\gamma}{2}} \lVert u \rVert_{\calL^{2}(\Omega_{d}, g)}^{1 - \gamma}.
\end{split}
\end{equation}
Here the constant $ C_{4}' $ is independent of $ u, v $ and the size of $ \Omega_{d} $.
\medskip

\medskip

For later use, we need a consequence of (\ref{PDE9}) for later use in terms of $ \int_{\Omega_{d}} -R_{g} u^{2} \dvol $ and $ \int_{\Omega_{d}} -R_{g} v^{2} \dvol $: Note that both terms are positive, we then derive
\begin{align*}
\lVert u + v \rVert_{\calL^{2}(\Omega_{d}, g)} & \geqslant \sqrt{\frac{1}{\sup_{\Omega_{d}} \lvert R_{g} \rvert}} \left( \int_{\Omega_{d}} \left( \left(-R_{g} \right)^{\frac{1}{2}} u + \left(- R_{g} \right)^{\frac{1}{2}} v\right)^{2} \dvol \right)^{\frac{1}{2}} \\
& \geqslant \sqrt{\frac{1}{\sup_{\Omega_{d}} \lvert R_{g} \rvert}} \left( \int_{\Omega_{d}} \left(-R_{g} u^{2} - R_{g} v^{2} \right) \dvol - \left\lvert \int_{\Omega_{d}} 2R_{g} uv \dvol \right\rvert \right)^{\frac{1}{2}} \\
& \geqslant \sqrt{\frac{1}{\sup_{\Omega_{d}} \lvert R_{g} \rvert}} \left\lvert \left( \int_{\Omega_{d}} -R_{g} u^{2}  \dvol \right)^{\frac{1}{2}} - \left(\int_{\Omega_{d}} - R_{g} v^{2} \dvol \right)^{\frac{1}{2}} \right\rvert.
\end{align*}
Here the second to the last inequality is due to Cauchy-Schwarz inequality for the last term in the second line of the inequality above. Using (\ref{PDE9}), we have
\begin{equation}\label{PDE11}
\begin{split}
0 & \leqslant \left\lvert \left( \int_{\Omega_{d}} -R_{g} u^{2}  \dvol \right)^{\frac{1}{2}} - \left( \int_{\Omega_{d}} - R_{g} v^{2} \dvol \right)^{\frac{1}{2}} \right\rvert \\
& \leqslant C_{4}' \sqrt{\frac{\sup_{\Omega_{d}} \lvert R_{g} \rvert}{\inf_{\Omega_{d}} \lvert R_{g} \rvert^{1 - \gamma}}} d^{2 + \frac{n\gamma}{2}} \left( \int_{\Omega_{d}} -R_{g} u^{2} \right)^{\frac{1 - \gamma}{2}}.
\end{split}
\end{equation}

\medskip

With the {\bf{Hypothesis I}}, we impose the second restriction on the smallness of $ \Omega_{d} $. Note that for fixed $ \gamma \in (0, 1) $, the ratio $ \frac{\sup_{\Omega} \lvert R_{g} \vert}{\inf_{\Omega} \lvert R_{g} \rvert^{1 - \gamma}} $ has a fixed upper bound when we shrink $ \Omega_{d} $.

{\bf{Hypothesis II}}. We shrink $ \Omega $ to be small enough such that
\begin{equation}\label{PDE13}
\begin{split}
d \ll 1 & \Rightarrow aT(1 - \delta) Bd^{2} - d^{4 + \frac{n\gamma}{2}} \cdot 2 \left(V_{n} \right)^{\frac{2}{n}} \cdot \left( 8 C_{4}' \cdot \sqrt{\frac{\sup_{\Omega} \lvert R_{g} \rvert^{3}}{\inf_{\Omega} \lvert R_{g} \rvert^{1 - \gamma}}}) \right) > \frac{1}{2} aT(1 - \delta) Bd^{2}, n \geqslant 5; \\
d \ll 1 & \Rightarrow aT(1 - \delta) B_{0}d^{2} \lvert \log d \rvert - d^{4 + \frac{n\gamma}{2}}  \cdot 2 \left(V_{n} \right)^{\frac{2}{n}}  \cdot \left( 8 C_{4}' \cdot \sqrt{\frac{\sup_{\Omega} \lvert R_{g} \rvert^{3}}{\inf_{\Omega} \lvert R_{g} \rvert^{1 - \gamma}}} \right) \\
& \qquad > \frac{1}{2}aT(1 - \delta) B_{0} d^{2} \lvert \log d \rvert, n = 4. \\
\end{split}
\end{equation}
Here $ V_{n} $ is the Euclidean surface area of the unit $ n -1 $-sphere. For convenience, we denote
\begin{equation*}
\tilde{C}_{4} : =  2 \left(V_{n} \right)^{\frac{2}{n}} \cdot \left( 8 C_{4}' \cdot \sqrt{\frac{\sup_{\Omega} \lvert R_{g} \rvert^{3}}{\inf_{\Omega} \lvert R_{g} \rvert^{1 - \gamma}}}) \right).
\end{equation*}
Note that all constants are independent of $ u, v $ and the size of $ \Omega_{d} $ provided that $ \Omega_{d} $ is small enough.
\begin{remark}\label{PDEre1}
Both hypotheses on the size of the domain will be used to show that (\ref{PDE0a}) holds in the next section with our final choice of $ \Omega_{d} $ and the specific $ u = u_{\beta_{0}, \epsilon, \Omega} $ in (\ref{PDE1}).

We point out that Hypotheses I and II also holds with any smaller $ \gamma \in (0, 1) $.
\end{remark}
\medskip

{\it{Fix this domain $ \Omega_{d} $ from now on}}.
\medskip

\subsection{Proof of Proposition \ref{PDE:prop1}}
In this section, we give the proof of Proposition \ref{PDE:prop1} by showing that (\ref{PDE0a}) holds for some test function $ u $ with the fixed domain $ \Omega_{d} $ and Hypotheses I and II given above. Throughout this section, all terms, quantities, etc. without any labeling of $ \epsilon $ or $ \beta_{0} $ are independent of these two constants. In particular, all estimates in the previous section, and hence Hypotheses I and II, are independent of $ \epsilon $ and $ \beta_{0} $.

We choose $ \beta_{0} = \alpha \cdot \epsilon^{\frac{1}{2}} $ small enough such that for fixed $ d $, 
\begin{equation}\label{EST0}
\epsilon^{(n - 2) \gamma} \gg \epsilon.
\end{equation}
This can be done due to (\ref{Fix:eqn0}). The left hand side is getting larger if we decrease the positive constant $ \epsilon $. We determine the smallness later.

Any multiple of the test function gives the same bound for $ J_{0, \beta, \Omega_{d}} $, the perturbed Yamabe quotient, hence we choose the classical Aubin's test function $ u_{\beta_{0}, \epsilon d^{2}, \Omega_{d}} $ (up to scaling) such that
\begin{equation}\label{EST1}
\lVert u_{\beta_{0}, \epsilon d^{2}, \Omega_{d}} \rVert_{\calL^{2}(\Omega_{d}, g)} = 1 \; {\rm and} \; \lVert u_{\beta_{0}, \epsilon d^{2}, \Omega_{d}} \rVert_{\calL^{p}(\Omega_{d}, g)} \geqslant 1
\end{equation}
where
\begin{equation*}
u_{\beta_{0}, \epsilon d^{2}, \Omega_{d}}(x) = \frac{K_{\epsilon, d}\varphi_{\Omega_{d}}(x)}{\left( \epsilon d^{2} + \lvert x \rvert^{2} \right)^{\frac{n-2}{2}}}
\end{equation*}
in local coordinates. Here the constant $ K_{\epsilon, d} $ is the scaling factor of order $ \epsilon^{\frac{n}{4} - 1} d^{\frac{n}{2} - 2} $. The second inequality holds since $ \text{Vol}_{g} \Omega_{d} < 1 $ by Hypothesis I.

To construct the new test function $ \phi $, we consider the equation (\ref{PDE1}) with $ u = u_{\beta_{0}, \epsilon d^{2}, \Omega_{d}} $, the classical Aubin's test function, i.e.
\begin{equation}\label{EST2}
-a\Delta_{g} v - R_{g} v = a \Delta_{g} u_{\beta_{0}, \epsilon d^{2}, \Omega_{d}} + R_{g} u_{\beta_{0}, \epsilon d^{2}, \Omega_{d}} - R_{g} u_{\beta_{0}, \epsilon d^{2}, \Omega_{d}}^{1 - \gamma} \; {\rm in} \; \Omega_{d}, v = 0 \; {\rm on} \; \partial \Omega_{d}
\end{equation}
with the same fixed $ \gamma \in (0, 1) $ as in (\ref{PDE1}). By the results in previous section, we know that $ v \in W^{2, q}(\Omega_{d}, g), \forall q \in (1, \infty) $ and $ u_{\beta_{0}, \epsilon d^{2}, \Omega_{d}} + v > 0 $ in $ \Omega_{d} $. Importantly, $ v $ is independent of $ \beta $.

There are two possibilities of the solution $ v $. If $ \lvert v \rvert \in H_{0}^{1}(\Omega_{d}, g) $ satisfies
\begin{equation*}
\frac{\lVert \nabla_{g} \lvert v \rvert \rVert_{\calL^{2}(\Omega_{d}, g)}^{2} + \frac{1}{a} \int_{\Omega_{d}} v \lvert R_{g} \rvert^{2} \dvol}{\lVert v \rVert_{\calL^{p}(\Omega_{d}, g)}^{2}} < T,
\end{equation*}
then we just choose the new test function $ \phi : = \lvert v \rvert $, which follows that (\ref{PDE0a}) holds uniformly in $ \beta \in (0, \beta_{0}) $. Remember that all we need is a $ H^{1} $-test function.

From now on, we assume that it is not the case, i.e.
\begin{equation}\label{EST3}
a\lVert \nabla_{g} \lvert v \rvert \rVert_{\calL^{2}(\Omega_{d}, g)}^{2} +  \int_{\Omega} v \lvert R_{g} \rvert^{2} \dvol \geqslant aT\lVert v \rVert_{\calL^{p}(\Omega, g)}^{2}.
\end{equation}

{\it{We claim that (\ref{PDE0a}) holds, uniformly in $ \beta \in (0, \beta_{0}) $}}, by setting
\begin{equation}\label{PDE:EXT}
\phi : = u_{\beta_{0}, \epsilon d^{2}, \Omega} + v.
\end{equation}
Denote
\begin{equation}\label{EST4}
\begin{split}
\Gamma_{1} : & = \frac{\lVert u_{\beta_{0}, \epsilon d^{2}, \Omega_{d}} + v \rVert_{\calL^{p}(\Omega_{d}, g)}^{2}}{\lVert u_{\beta_{0}, \epsilon d^{2}, \Omega_{d}} \rVert_{\calL^{p}(\Omega_{d}, g)}^{2}}; \\
\Gamma_{2} : & = \frac{a \int_{\Omega_{d}} \nabla_{g} ( u_{\beta_{0}, \epsilon d^{2}, \Omega_{d}} + v) \cdot \nabla_{g}(u_{\beta_{0}, \epsilon d^{2}, \Omega_{d}} + v) \dvol}{a \int_{\Omega_{d}} \nabla_{g} u_{\beta_{0}, \epsilon d^{2}, \Omega_{d}} \cdot \nabla_{g} u_{\beta_{0}, \epsilon d^{2}, \Omega_{d}} \dvol + \int_{\Omega_{d}} \left( R_{g} - \beta_{0} \right) u_{\beta_{0}, \epsilon d^{2}, \Omega_{d}}^{2} \dvol} \\
& + \frac{\int_{\Omega_{d}} \left( R_{g} - \beta \right) ( u_{\beta_{0}, \epsilon d^{2}, \Omega_{d}} + v)^{2} \dvol}{a \int_{\Omega_{d}} \nabla_{g} u_{\beta_{0}, \epsilon d^{2}, \Omega_{d}} \cdot \nabla_{g} u_{\beta_{0}, \epsilon d^{2}, \Omega_{d}} \dvol + \int_{\Omega_{d}} \left( R_{g} - \beta_{0} \right) u_{\beta_{0}, \epsilon d^{2}, \Omega_{d}}^{2} \dvol}.
\end{split}
\end{equation}
To verify (\ref{PDE0a}), it suffices to show that there exists some positive constant $ C > 0 $, uniform for all $ \beta \in (0, \beta_{0}) $, and depending only on the metric $ g $ and $ u_{\beta_{0}, \epsilon d^{2}, \Omega_{d}} $ for the fixed domain $ \Omega_{d} $, such that
\begin{equation}\label{EST5}
\Gamma_{1} \geqslant \Gamma_{2} + C.
\end{equation}
Expanding $ \Gamma_{2} $,
\begin{align*}
\Gamma_{2} & = 1 + \frac{2a \int_{\Omega_{d}} \nabla_{g} u_{\beta_{0}, \epsilon d^{2}, \Omega_{d}} \cdot \nabla_{g} v \dvol + a \int_{\Omega_{d}} \nabla_{g} v \cdot \nabla_{g} v \dvol}{a \int_{\Omega_{d}} \nabla_{g} u_{\beta_{0}, \epsilon d^{2}, \Omega_{d}} \cdot \nabla_{g}u_{\beta_{0}, \epsilon d^{2}, \Omega_{d}} \dvol + \int_{\Omega_{d}} \left( R_{g} - \beta_{0} \right) u_{\beta_{0}, \epsilon d^{2}, \Omega_{d}}^{2} \dvol} \\
& \qquad + \frac{2\int_{\Omega_{d}} \left( R_{g} - \beta \right) u_{\beta_{0}, \epsilon d^{2}, \Omega_{d}}v \dvol}{a \int_{\Omega_{d}} \nabla_{g} u_{\beta_{0}, \epsilon d^{2}, \Omega_{d}} \cdot \nabla_{g}u_{\beta_{0}, \epsilon d^{2}, \Omega_{d}} \dvol + \int_{\Omega_{d}} \left( R_{g} - \beta_{0} \right) u_{\beta_{0}, \epsilon d^{2}, \Omega_{d}}^{2} \dvol} \\
& \qquad \qquad + \frac{\int_{\Omega_{d}} (R_{g} - \beta) v^{2} \dvol - \left( \beta - \beta_{0} \right) \int_{\Omega_{d}} u_{\beta_{0}, \epsilon d^{2}, \Omega_{d}}^{2} \dvol}{a \int_{\Omega_{d}} \nabla_{g} u_{\beta_{0}, \epsilon d^{2}, \Omega_{d}} \cdot \nabla_{g}u_{\beta_{0}, \epsilon d^{2}, \Omega_{d}} \dvol + \int_{\Omega_{d}} \left( R_{g} - \beta_{0} \right) u_{\beta_{0}, \epsilon d^{2}, \Omega_{d}}^{2} \dvol} 
\end{align*}
To get (\ref{EST5}), it suffice to show that
\begin{equation}\label{EST6}
\begin{split}
& \qquad 1 - \frac{aT\lVert u_{\beta_{0}, \epsilon d^{2}, \Omega_{d}} + v \rVert_{\calL^{p}(\Omega_{d}, g)}^{2}}{aT\lVert u_{\beta_{0}, \epsilon d^{2}, \Omega_{d}} \rVert_{\calL^{p}(\Omega_{d}, g)}^{2}}  \\
& \leqslant -\frac{2a \int_{\Omega_{d}} \nabla_{g} u_{\beta_{0}, \epsilon d^{2}, \Omega_{d}} \cdot \nabla_{g} v \dvol + a \int_{\Omega_{d}} \nabla_{g} v \cdot \nabla_{g} v \dvol}{a \int_{\Omega_{d}} \nabla_{g} u_{\beta_{0}, \epsilon d^{2}, \Omega_{d}} \cdot \nabla_{g}u_{\beta_{0}, \epsilon d^{2}, \Omega_{d}} \dvol + \int_{\Omega_{d}} \left( R_{g} - \beta_{0} \right) u_{\beta_{0}, \epsilon d^{2}, \Omega_{d}}^{2} \dvol} \\
& \qquad - \frac{2\int_{\Omega_{d}} \left( R_{g} - \beta \right) u_{\beta_{0}, \epsilon d^{2}, \Omega_{d}}v \dvol}{a \int_{\Omega_{d}} \nabla_{g} u_{\beta_{0}, \epsilon d^{2}, \Omega_{d}} \cdot \nabla_{g}u_{\beta_{0}, \epsilon d^{2}, \Omega_{d}} \dvol + \int_{\Omega_{d}} \left( R_{g} - \beta_{0} \right) u_{\beta_{0}, \epsilon d^{2}, \Omega_{d}}^{2} \dvol} \\
& \qquad \qquad - \frac{\int_{\Omega_{d}} (R_{g} - \beta) v^{2} \dvol - \left( \beta - \beta_{0} \right) \int_{\Omega_{d}} u_{\beta_{0}, \epsilon d^{2}, \Omega_{d}}^{2} \dvol}{a \int_{\Omega_{d}} \nabla_{g} u_{\beta_{0}, \epsilon d^{2}, \Omega_{d}} \cdot \nabla_{g}u_{\beta_{0}, \epsilon d^{2}, \Omega_{d}} \dvol + \int_{\Omega_{d}} \left( R_{g} - \beta_{0} \right) u_{\beta_{0}, \epsilon d^{2}, \Omega_{d}}^{2} \dvol} \\
& \qquad \qquad \qquad - C
\end{split}
\end{equation}
for some positive constant $ C $ that will be determined later. Note that we have multiplied $ - 1 $ on both sides of (\ref{EST5}) so the direction of the inequality in (\ref{EST6}) is reversed. Using (\ref{PDE7}) with $ u = u_{\beta_{0}, \epsilon d^{2}, \Omega_{d}} $ and $ v $ in (\ref{PDE1}), and the assumption in (\ref{EST1}), we observe with the triangle inequality $ \left\lvert \lVert u_{\beta_{0}, \epsilon d^{2}, \Omega_{d}} \rVert_{\calL^{p}(\Omega_{d}, g)} - \lVert v \rVert_{\calL^{p}(\Omega_{d}, g)} \right\rvert \leqslant \lVert u_{\beta_{0}, \epsilon d^{2}, \Omega_{d}} + v \rVert_{\calL^{p}(\Omega_{d}, g)} $ that
\begin{equation}\label{EST9}
\begin{split}
& \qquad \lVert u_{\beta_{0}, \epsilon d^{2}, \Omega_{d}} + v \rVert_{\calL^{p}(\Omega_{d}, g)} \leqslant C_{3}' d^{2 + \frac{n - 2}{2} \gamma} \lVert u_{\beta_{0}, \epsilon d^{2}, \Omega_{d}} \rVert_{\calL^{p}(\Omega_{d}, g)}^{1 - \gamma} \\
& \qquad \qquad \leqslant C_{3}' d^{2 + \frac{n - 2}{2} \gamma} \lVert u_{\beta_{0}, \epsilon d^{2}, \Omega} \rVert_{\calL^{p}(\Omega_{d}, g)}. \\
& \Rightarrow \left( 1 - C_{3}' d^{2 + \frac{n -2}{2} \gamma} \right) \lVert u_{\beta_{0}, \epsilon d^{2}, \Omega_{d}} \rVert_{\calL^{p}(\Omega_{d}, g)} \leqslant \lVert v \rVert_{\calL^{p}(\Omega_{d}, g)} \leqslant \left( 1 + C_{3}' d^{2 + \frac{n -2}{2} \gamma} \right)\lVert u_{\beta_{0}, \epsilon d^{2}, \Omega_{d}} \rVert_{\calL^{p}(\Omega_{d}, g)}.
\end{split}
\end{equation}
This gives the closeness between $ \lVert v \rVert_{\calL^{p}(\Omega_{d}, g)} $ and $ \lVert u_{\beta_{0}, \epsilon d^{2}, \Omega_{d}} \rVert_{\calL^{p}(\Omega_{d}, g)} $. It follows that when $ d \ll 1 $ the left hand side of (\ref{EST6}) is positive by Hypothesis I. 
\medskip

Classified by the dimension of $ \Omega_{d} $, We keep our argument in two cases.
\medskip

When the dimension $ n \geqslant 5 $, it follows from the first estimate in (\ref{PDE6}) for the perturbed Yamabe quotient that
\begin{equation}\label{EST6a}
\begin{split}
& \qquad 1 - \frac{aT\lVert u_{\beta_{0}, \epsilon d^{2}, \Omega_{d}} + v \rVert_{\calL^{p}(\Omega_{d}, g)}^{2}}{aT\lVert u_{\beta_{0}, \epsilon d^{2}, \Omega_{d}} \rVert_{\calL^{p}(\Omega_{d}, g)}^{2}} \\
& \leqslant \frac{aT \left( \lVert u_{\beta_{0}, \epsilon d^{2}, \Omega_{d}} \rVert_{\calL^{p}(\Omega_{d}, g)}^{2} - \lVert u_{\beta_{0}, \epsilon d^{2}, \Omega_{d}} + v \rVert_{\calL^{p}(\Omega_{d}, g)}^{2} \right) \cdot \left(1 - Bd^{2} \right)}{ \int_{\Omega_{d}} \nabla_{g} u_{\beta_{0}, \epsilon d^{2}, \Omega_{d}} \cdot \nabla_{g}u_{\beta_{0}, \epsilon d^{2}, \Omega_{d}} \dvol + \int_{\Omega_{d}} \left( R_{g} - \beta_{0} \right) u_{\beta_{0}, \epsilon d^{2}, \Omega_{d}}^{2} \dvol} \\
& \leqslant \frac{\left(1 - Bd^{2} \right) \cdot aT \lVert u_{\beta_{0}, \epsilon d^{2}, \Omega_{d}} \rVert_{\calL^{p}(\Omega_{d}, g)}^{2}}{ \int_{\Omega_{d}} \nabla_{g} u_{\beta_{0}, \epsilon d^{2}, \Omega_{d}} \cdot \nabla_{g}u_{\beta_{0}, \epsilon d^{2}, \Omega_{d}} \dvol + \int_{\Omega_{d}} \left( R_{g} - \beta_{0} \right) u_{\beta_{0}, \epsilon d^{2}, \Omega_{d}}^{2} \dvol}.
\end{split}
\end{equation}
Hence if the following inequality (\ref{EST7})
\begin{equation}\label{EST7}
\begin{split}
& \qquad \left(1 - Bd^{2} \right) \cdot aT \lVert u_{\beta_{0}, \epsilon d^{2}, \Omega_{d}} \rVert_{\calL^{p}(\Omega_{d}, g)}^{2} \\
& \leqslant -2a \int_{\Omega_{d}} \nabla_{g} u_{\beta_{0}, \epsilon d^{2}, \Omega_{d}} \cdot \nabla_{g} v \dvol - a \int_{\Omega_{d}} \nabla_{g} v \cdot \nabla_{g} v \dvol \\
& \qquad - 2\int_{\Omega_{d}} \left( R_{g} - \beta \right) u_{\beta_{0}, \epsilon d^{2}, \Omega_{d}}v \dvol - \int_{\Omega_{d}} (R_{g} - \beta) v^{2} \dvol \\
& \qquad \qquad + \left( \beta - \beta_{0} \right) \int_{\Omega_{d}} u_{\beta_{0}, \epsilon d^{2}, \Omega_{d}}^{2} \dvol - C' \\
& = -2a \int_{\Omega_{d}} \nabla_{g} u_{\beta_{0}, \epsilon d^{2}, \Omega_{d}} \cdot \nabla_{g} v \dvol - 2\int_{\Omega_{d}} \left( R_{g} - \beta \right) u_{\beta_{0}, \epsilon d^{2}, \Omega_{d}}v \dvol \\
& \qquad - 2a \int_{\Omega_{d}} \nabla_{g} v \cdot \nabla_{g} v \dvol + 2  \int_{\Omega_{d}} R_{g} v^{2} \dvol \\
& \qquad \qquad + \left( a \int_{\Omega_{d}} \nabla_{g} v \cdot \nabla_{g} v \dvol +  \int_{\Omega_{d}} R_{g} v^{2} \dvol \right) \\
& \qquad \qquad \qquad - 4 \int_{\Omega_{d}} R_{g} v^{2} \dvol + \beta \int_{\Omega_{d}} v^{2} \dvol \\
& \qquad \qquad \qquad \qquad - \left( \beta_{0} - \beta \right) \int_{\Omega_{d}} u_{\beta_{0}, \epsilon d^{2}, \Omega_{d}}^{2} \dvol - C' : = R(u_{\beta_{0}, \epsilon, \Omega_{d}}, v, \beta_{0}, \beta).
\end{split}
\end{equation}
holds for some positive constant $ C' $ that is uniform for $ \beta \in (0, \beta_{0}) $, then the inequality (\ref{EST6}) must hold. Pairing (\ref{EST2}) with $ v $ on both sides and applying integration by parts, we get
\begin{align*}
& a \int_{\Omega_{d}} \nabla_{g} v \cdot \nabla_{g} v \dvol - \int_{\Omega_{d}} R_{g} v^{2} \\
& \qquad = -a \int_{\Omega_{d}} \nabla_{g} u_{\beta_{0}, \epsilon d^{2}, \Omega_{d}} \cdot \nabla_{g} v \dvol + \int_{\Omega_{d}} R_{g} \left( u_{\beta_{0}, \epsilon d^{2}, \Omega_{d}} - u_{\beta_{0}, \epsilon d^{2}, \Omega_{d}}^{1 - \gamma} \right) v \dvol.
\end{align*}
Applying this equality and the assumption (\ref{EST3}) for $ v $ into the right hand side of the inequality (\ref{EST7}),
\begin{align*}
& \qquad R(u_{\beta_{0}, \epsilon d^{2}, \Omega_{d}}, v, \beta_{0}, \beta) \\
& \geqslant  -2a \int_{\Omega_{d}} \nabla_{g} u_{\beta_{0}, \epsilon d^{2}, \Omega_{d}} \cdot \nabla_{g} v \dvol - 2\int_{\Omega_{d}} \left( R_{g} - \beta \right) u_{\beta_{0}, \epsilon d^{2}, \Omega_{d}}v \dvol \\
& \qquad + 2a \int_{\Omega_{d}} \nabla_{g} u_{\beta_{0}, \epsilon d^{2}, \Omega_{d}} \cdot \nabla_{g} v \dvol  - 2\int_{\Omega_{d}} R_{g} \left( u_{\beta_{0}, \epsilon d^{2}, \Omega_{d}} - u_{\beta_{0}, \epsilon d^{2}, \Omega_{d}}^{1 - \gamma} \right) v \dvol\\
& \qquad \qquad + aT \lVert v \rVert_{\calL^{q}(\Omega_{d}, g)}^{2} - 4 \int_{\Omega_{d}} R_{g} v^{2} \dvol + \beta \int_{\Omega_{d}} v^{2} \dvol \\
& \qquad \qquad \qquad - \left( \beta_{0} - \beta \right) \int_{\Omega_{d}} u_{\beta_{0}, \epsilon d^{2}, \Omega_{d}}^{2} \dvol - C' 
\end{align*}
It follows that (\ref{EST7}) must hold if we can further show the following strengthened inequality
\begin{align*}
& \qquad \left(1 - Bd^{2} \right) \cdot aT \lVert u_{\beta_{0}, \epsilon d^{2}, \Omega_{d}} \rVert_{\calL^{p}(\Omega_{d}, g)}^{2} \\
& \leqslant -2a \int_{\Omega_{d}} \nabla_{g} u_{\beta_{0}, \epsilon d^{2}, \Omega_{d}} \cdot \nabla_{g} v \dvol - 2\int_{\Omega_{d}} \left( R_{g} - \beta \right) u_{\beta_{0}, \epsilon d^{2}, \Omega_{d}}v \dvol \\
& \qquad + 2a \int_{\Omega_{d}} \nabla_{g} u_{\beta_{0}, \epsilon d^{2}, \Omega_{d}} \cdot \nabla_{g} v \dvol  - 2\int_{\Omega_{d}} R_{g} \left( u_{\beta_{0}, \epsilon d^{2}, \Omega_{d}} - u_{\beta_{0}, \epsilon d^{2}, \Omega_{d}}^{1 - \gamma} \right) v \dvol\\
& \qquad \qquad + aT \lVert v \rVert_{\calL^{p}(\Omega_{d}, g)}^{2} - 4 \int_{\Omega_{d}} R_{g} v^{2} \dvol + \beta \int_{\Omega_{d}} v^{2} \dvol \\
& \qquad \qquad \qquad - \left( \beta_{0} - \beta \right) \int_{\Omega_{d}} u_{\beta_{0}, \epsilon d^{2}, \Omega_{d}}^{2} \dvol - C' 
\end{align*}
for some fixed positive constant $ C' $, uniform in $ \beta \in (0, \beta_{0}) $. Note that the first terms in the second and third line of the inequality above cancels each other. Therefore, we only need to show
\begin{equation}\label{EST8}
\begin{split}
& \qquad 2\int_{\Omega_{d}} \left( R_{g} - \beta \right) u_{\beta_{0}, \epsilon d^{2}, \Omega_{d}}v \dvol + 2\int_{\Omega_{d}} R_{g} \left( u_{\beta_{0}, \epsilon d^{2}, \Omega_{d}} - u_{\beta_{0}, \epsilon d^{2}, \Omega_{d}}^{1 - \gamma} \right) v \dvol \\
& \qquad + 4 \int_{\Omega_{d}} R_{g} v^{2} \dvol - \beta \int_{\Omega_{d}} v^{2} \dvol + \left( \beta_{0} - \beta \right) \int_{\Omega_{d}} u_{\beta_{0}, \epsilon d^{2}, \Omega_{d}}^{2} \dvol + C' \\
& \leqslant aT \lVert v \rVert_{\calL^{p}(\Omega_{d}, g)}^{2} - \left(1 - Bd^{2} \right) \cdot aT \lVert u_{\beta_{0}, \epsilon d^{2}, \Omega_{d}} \rVert_{\calL^{p}(\Omega_{d}, g)}^{2}.
\end{split}
\end{equation}

We now show that subtraction between the lower bound of the right hand side of (\ref{EST8}) and the upper bound of the left hand side of (\ref{EST8}) without the $ C' $ term is some fixed number with respect to the fixed function $ u_{\beta_{0}, \epsilon d^{2}, \Omega_{d}} $ for the fixed domain $ \Omega $ and fixed background metric $ g $, uniform in $ \beta \in (0, \beta_{0}) $. Therefore, we can choose an appropriate positive constant $ C' $ that is uniform in $ \beta $.
\medskip

Taking the Hypothesis I again, the lower bound of the right hand side of (\ref{EST8}) can be given as follows:
\begin{equation}\label{EST10}
\begin{split}
& aT \lVert v \rVert_{\calL^{p}(\Omega_{d}, g)}^{2} - \left(1 - Bd^{2} \right) \cdot aT \lVert u_{\beta_{0}, \epsilon d^{2}, \Omega_{d}} \rVert_{\calL^{p}(\Omega_{d}, g)}^{2} \\
& \geqslant aT \left( 1 - C_{3}' d^{2 + \frac{n - 2}{2} \gamma} \right) \lVert u_{\beta_{0}, \epsilon d^{2}, \Omega_{d}} \rVert_{\calL^{p}(\Omega_{d}, g)}^{2} - \left(1 - Bd^{2} \right) \cdot aT \lVert u_{\beta_{0}, \epsilon d^{2}, \Omega_{d}} \rVert_{\calL^{p}(\Omega_{d}, g)}^{2} \\
& \geqslant aT B (1 - \delta) d^{2} \lVert u_{\beta_{0}, \epsilon d^{2}, \Omega_{d}} \rVert_{\calL^{p}(\Omega_{d}, g)}^{2} > 0.
\end{split}
\end{equation}
Apply the assumptions in (\ref{EST1}), the estimate (\ref{PDE9}) for $ \calL^{2} $-closeness of (\ref{EST2}) on $ \Omega_{d} $ satisfies
\begin{align*}
& \qquad \left\lvert \lVert u_{\beta_{0}, \epsilon d^{2}, \Omega_{d}} \rVert_{\calL^{2}(\Omega_{d}, g)} - \lVert v \rVert_{\calL^{2}(\Omega_{d}, g)} \right\rvert \leqslant \lVert u_{\beta_{0}, \epsilon d^{2}, \Omega_{d}} + v \rVert_{\calL^{2}(\Omega_{d}, g)} \\
& \leqslant C_{4}' d^{2 + \frac{n\gamma}{2} }  \lVert u_{\beta_{0}, \epsilon d^{2}, \Omega_{d}} \rVert_{\calL^{2}(\Omega_{d}, g)}^{1 - \gamma} \leqslant C_{4}' d^{2 + \frac{n\gamma}{2} }  \lVert u_{\beta_{0}, \epsilon d^{2}, \Omega} \rVert_{\calL^{2}(\Omega_{d}, g)}.
\end{align*}
Thus,
\begin{equation}\label{EST11}
\left( 1 - C_{4}' d^{2 + \frac{n\gamma}{2}} \right) \lVert u_{\beta_{0}, \epsilon d^{2}, \Omega_{d}} \rVert_{\calL^{2}(\Omega_{d}, g)} \leqslant  \lVert v \rVert_{\calL^{2}(\Omega_{d}, g)} \leqslant \left( 1 + C_{4}' d^{2 + \frac{n\gamma}{2}} \right) \lVert u_{\beta_{0}, \epsilon d^{2}, \Omega_{d}} \rVert_{\calL^{2}(\Omega_{d}, g)}.
\end{equation}
With the estimates of $ \lVert u_{\beta_{0}, \epsilon d^{2}, \Omega_{d}} + v \rVert_{\calL^{2}(\Omega_{d}, g)} $ and (\ref{EST1}), we would like to check the sign of
\begin{equation*}
\int_{\Omega_{d}} -R_{g} u_{\beta_{0}, \epsilon d^{2}, \Omega_{d}}^{1 - \gamma} v \dvol.
\end{equation*}
We observe that
\begin{equation*}
-\int_{\Omega_{d}} R_{g} u_{\beta_{0}, \epsilon d^{2}, \Omega_{d}}^{1 - \gamma} v \dvol = \int_{\Omega_{d}} -R_{g} u_{\beta_{0}, \epsilon d^{2}, \Omega_{d}}^{1 - \gamma} (u_{\beta_{0}, \epsilon d^{2}, \Omega_{d}} +  v) \dvol + \int_{\Omega_{d}} R_{g} u_{\beta_{0}, \epsilon d^{2}, \Omega_{d}}^{2 - \gamma} \dvol : = I_{1} + I_{2}.
\end{equation*}
Note that both $ I_{1} $ and $ -I_{2} $ are positive terms by hypotheses. We compare the upper bound of $ I_{1} $ and lower bound of $ -I_{2} $. The upper bound of $ I_{1} $ satisfies
\begin{align*}
I_{1} & =  \int_{\Omega_{d}} -R_{g} u_{\beta_{0}, \epsilon d^{2}, \Omega_{d}}^{1 - \gamma} (u_{\beta_{0}, \epsilon d^{2}, \Omega_{d}} +  v) \dvol \leqslant \sup_{\bar{\Omega}_{d}} \lvert R_{g} \rvert \lVert u_{\beta_{0}, \epsilon d^{2}, \Omega_{d}}^{1 - \gamma} \rVert_{\calL^{2}(\Omega_{d}, g)} \lVert (u_{\beta_{0}, \epsilon d^{2}, \Omega_{d}} +  v) \rVert_{\calL^{2}(\Omega_{d}, g)} \\
& \leqslant \sup_{\bar{\Omega}_{d}} \lvert R_{g} \rvert (\text{Vol}_{g}(\Omega_{d}))^{\frac{\gamma}{2 - \gamma}}  \lVert u_{\beta_{0}, \epsilon d^{2}, \Omega_{d}} \rVert_{\calL^{2 - \gamma}(\Omega_{d}, g)}^{2 - 2\gamma} C_{4}' d^{2 + \frac{n\gamma}{2} } \lVert u_{\beta_{0}, \epsilon d^{2}, \Omega_{d}} \rVert_{\calL^{2 }(\Omega_{d}, g)}^{1- \gamma} \\
& : = \tilde{D}_{1} d^{2 + \frac{n\gamma}{2} + \frac{n\gamma}{2 - \gamma}} \lVert u_{\beta_{0}, \epsilon d^{2}, \Omega_{d}} \rVert_{\calL^{2 - \gamma}(\Omega_{d}, g)}^{2 - 2\gamma}
\end{align*}
The last equality is due to (\ref{EST1}). Note that in the local expression of $ u_{\beta_{0}, \epsilon d^{2}, \Omega_{d}} $, $ K_{d} = \kappa \cdot d^{\frac{-n + 4}{2}} $ for some constant $ \kappa $ to obtain (\ref{EST1}). For $ -I_{2} $,
\begin{align*}
-I_{2} & = \int_{\Omega_{d}} -R_{g}  u_{\beta_{0}, \epsilon d^{2}, \Omega_{d}}^{2 - \gamma} \dvol \geqslant \lVert  u_{\beta_{0}, \epsilon d^{2}, \Omega_{d}} \rVert_{\calL^{2 - \gamma}(\Omega, d)}^{2 - \gamma} \\
& : = \tilde{D}_{2}  \lVert u_{\beta_{0}, \epsilon d^{2}, \Omega_{d}} \rVert_{\calL^{2 - \gamma}(\Omega_{d}, g)}^{2 - 2\gamma} d^{\frac{n\gamma}{2}}.
\end{align*}
Here $ \tilde{D}_{1}, \tilde{D}_{2} $ are independent of $ d $. Furthermore, $ \tilde{D}_{1} $ is related to the volume of unit ball with fixed metric $ g $, and the maximum of $ R_{g} $, while $ \tilde{D}_{2} $ is related to $ \lVert u_{\beta_{0}, \epsilon, \Omega} \rVert_{\calL^{2 - \gamma}(\Omega, g)} $ which is very large if $ \beta_{0} $, and hence $ \epsilon $, are small enough. It follows that
\begin{equation}\label{Fix:eqn1}
d \ll 1 \Rightarrow I_{1} < -I_{2} \Rightarrow \int_{\Omega_{d}} -R_{g} u_{\beta_{0}, \epsilon d^{2}, \Omega_{d}}^{1 - \gamma} v \dvol < 0.
\end{equation}
The estimates of $ I_{1} $ and $ I_{2} $ also implies that $ \int_{\Omega_{d}} -R_{g} u_{\beta_{0}, \epsilon d^{2}, \Omega_{d}}^{1 - \gamma} v \dvol \approx \int_{\Omega_{d}} R_{g} u_{\beta_{0}, \epsilon d^{2}, \Omega_{d}}^{2 - \gamma} \dvol $.

By (\ref{EST1}) and $ R_{g} \leqslant -1 $ on $ \Omega_{d} $, we have
\begin{equation*}
\lVert u_{\beta_{0}, \epsilon d^{2}, \Omega_{d}} \rVert_{\calL^{2}(\Omega_{d}, g)} = 1 \Rightarrow \int_{\Omega} -R_{g} u_{\beta_{0}, \epsilon d^{2}, \Omega_{d}}^{2} \dvol \geqslant 1.
\end{equation*}
By (\ref{EST1}), the inequality (\ref{PDE11}) becomes
\begin{equation}\label{EST13}
\begin{split}
& \qquad \left\lvert \left( \int_{\Omega_{d}} - R_{g} u_{\beta_{0}, \epsilon d^{2}, \Omega_{d}}^{2} \dvol \right)^{\frac{1}{2}} - \left( \int_{\Omega_{d}} - R_{g} v^{2} \dvol \right)^{\frac{1}{2}} \right\rvert \\
& \leqslant C_{4}' \sqrt{\frac{\sup_{\Omega_{d}} \lvert R_{g} \rvert}{\inf_{\Omega} \lvert R_{g} \rvert^{1 - \gamma}}} d^{2 + \frac{n\gamma}{2}} \left( \int_{\Omega_{d}} -R_{g} u_{\beta_{0}, \epsilon d^{2}, \Omega_{d}}^{2} \right)^{\frac{1 - \gamma}{2}} \\
& \leqslant C_{4}' \sqrt{\frac{\sup_{\Omega} \lvert R_{g} \rvert}{\inf_{\Omega_{d}} \lvert R_{g} \rvert^{1 - \gamma}}} d^{2 + \frac{n\gamma}{2}} \left( \int_{\Omega_{d}} -R_{g} u_{\beta_{0}, \epsilon d^{2}, \Omega_{d}}^{2} \dvol \right)^{\frac{1}{2}} \\
\Rightarrow & \left( 1 - C_{4}' \sqrt{\frac{\sup_{\Omega} \lvert R_{g} \rvert}{\inf_{\Omega} \lvert R_{g} \rvert^{1 - \gamma}}} d^{2 + \frac{n\gamma}{2}} \right) \left( \int_{\Omega_{d}} - R_{g} u_{\beta_{0}, \epsilon d^{2}, \Omega_{d}}^{2} \dvol \right)^{\frac{1}{2}} \leqslant \left( \int_{\Omega_{d}} - R_{g} v^{2} \dvol \right)^{\frac{1}{2}} \\
& \qquad \leqslant \left( 1 + C_{4}' \sqrt{\frac{\sup_{\Omega} \lvert R_{g} \rvert}{\inf_{\Omega} \lvert R_{g} \rvert^{1 - \gamma}}} d^{2 + \frac{n\gamma}{2}} \right)\left( \int_{\Omega_{d}} - R_{g} u_{\beta_{0}, \epsilon d^{2}, \Omega_{d}}^{2} \dvol \right)^{\frac{1}{2}}.
\end{split}
\end{equation}
This gives the closeness between $ \left( \int_{\Omega_{d}} - R_{g} u_{\beta_{0}, \epsilon d^{2}, \Omega_{d}}^{2} \dvol \right)^{\frac{1}{2}} $ and $ \left( \int_{\Omega_{d}} - R_{g} v^{2} \dvol \right)^{\frac{1}{2}} $. It follows from (\ref{EST13}) and Cauchy-Schwarz inequality that
\begin{equation}\label{EST14}
\begin{split}
\left\lvert \int_{\Omega_{d}} R_{g} u_{\beta_{0}, \epsilon d^{2}, \Omega_{d}} v \dvol \right\rvert & \leqslant  \left( \int_{\Omega_{d}} - R_{g} u_{\beta_{0}, \epsilon d^{2}, \Omega_{d}}^{2} \dvol \right)^{\frac{1}{2}} \cdot \left( \int_{\Omega_{d}} - R_{g} v^{2} \dvol \right)^{\frac{1}{2}} \\
& \leqslant \left( 1 + C_{4}' \sqrt{\frac{\sup_{\Omega} \lvert R_{g} \rvert}{\inf_{\Omega} \lvert R_{g} \rvert^{1 - \gamma}}} d^{2 + \frac{n\gamma}{2}} \right) \int_{\Omega_{d}} - R_{g} u_{\beta_{0}, \epsilon d^{2}, \Omega_{d}}^{2} \dvol.
\end{split}
\end{equation}
We now give the upper bound of the left hand side of (\ref{EST8}). Clearly
\begin{equation*}
\left\lvert \beta \int_{\Omega_{d}} u_{\beta_{0}, \epsilon d^{2}, \Omega_{d}} v \dvol \right\rvert \leqslant \beta \left( \lVert u_{\beta_{0}, \epsilon d^{2}, \Omega_{d}} \rVert_{\calL^{2}(\Omega_{d}, g)}^{2} + \lVert v \rVert_{\calL^{2}(\Omega_{d}, g)}^{2} \right)
\end{equation*}
by Cauchy-Schwarz inequality and the fact that $ ab \leqslant a^{2} + b^{2}, \forall a,b \in \R $. 

Note that $ \int_{\Omega_{d}} -R_{g} u_{\beta_{0}, \epsilon d^{2}, \Omega_{d}} v \dvol < 0 $, thus by (\ref{EST0}), we observe that
\begin{equation}\label{Fix:eqn2}
\int_{\Omega_{d}} -R_{g} u_{\beta_{0}, \epsilon d^{2}, \Omega_{d}} v \dvol + \beta_{0} \int_{\Omega_{d}} u_{\beta_{0}, \epsilon d^{2}, \Omega_{d}}^{2} \dvol  = \tilde{D}_{3} \epsilon^{\frac{(n - 2)\gamma}{2}} + \tilde{D}_{4} \epsilon^{\frac{1}{2}} < 0
\end{equation}
when we choose $ \beta_{0} \ll 1 $, or equivalently, $ \epsilon \ll 1 $. Here $ \tilde{D}_{3} $ and $ \tilde{D}_{4} $ are independent of $ \epsilon $. These two coefficients are related to $ d $, but shrinking $ \epsilon $ does not the choice of the side $ d $ throughout this section.
By (\ref{EST13}), (\ref{EST14}) and the Cauchy-Schwarz above, the upper upper bound of the left hand side of (\ref{EST8}) with out the undetermined constant $ C' $ can be obtained as follows:
\begin{equation}\label{EST16}
\begin{split}
& \qquad 2\int_{\Omega_{d}} \left( R_{g} - \beta \right) u_{\beta_{0}, \epsilon d^{2}, \Omega_{d}}v \dvol + 2\int_{\Omega_{d}} R_{g} \left( u_{\beta_{0}, \epsilon d^{2}, \Omega_{d}} - u_{\beta_{0}, \epsilon d^{2}, \Omega_{d}}^{1 - \gamma} \right) v \dvol \\
& \qquad + 4 \int_{\Omega_{d}} R_{g} v^{2} \dvol - \beta \int_{\Omega_{d}} v^{2} \dvol + \left( \beta_{0} - \beta \right) \int_{\Omega_{d}} u_{\beta_{0}, \epsilon d^{2}, \Omega_{d}}^{2} \dvol \\
& \leqslant 4\left( 1 + C_{4}' \sqrt{\frac{\sup_{\Omega} \lvert R_{g} \rvert}{\inf_{\Omega} \lvert R_{g} \rvert^{1 - \gamma}}} d^{2 + \frac{n\gamma}{2}} \right) \int_{\Omega_{d}} - R_{g} u_{\beta_{0}, \epsilon d^{2}, \Omega_{d}}^{2} \dvol \\
& \qquad \qquad \qquad + 4 \left( 1 - C_{4}' \sqrt{\frac{\sup_{\Omega} \lvert R_{g} \rvert}{\inf_{\Omega} \lvert R_{g} \rvert^{1 - \gamma}}} d^{2 + \frac{n\gamma}{2}} \right)  \int_{\Omega_{d}} R_{g} u_{\beta_{0}, \epsilon d^{2}, \Omega_{d}}^{2} \dvol \\
& \leqslant 8C_{4}' \sqrt{\frac{\sup_{\Omega} \lvert R_{g} \rvert}{\inf_{\Omega} \lvert R_{g} \rvert^{1 - \gamma}}} d^{2 + \frac{n\gamma}{2}} \int_{\Omega_{d}} - R_{g} u_{\beta_{0}, \epsilon d^{2}, \Omega_{d}}^{2} \dvol \\
& : \leqslant \tilde{C}_{4} d^{4 + \frac{n\gamma}{2}}  \lVert u_{\beta_{0}, \epsilon d^{2}, \Omega_{d}} \rVert_{\calL^{p}(\Omega_{d}, g)}^{2}. 
\end{split}
\end{equation}
The constant $ \tilde{C}_{4} $ is defined right after the Hypothesis II. Due to (\ref{EST10}) and (\ref{EST16}), the inequality (\ref{EST8}) holds with some fixed constant $ C' $ if the following inequality holds:
\begin{equation}\label{EST17}
\begin{split}
& \qquad \tilde{C}_{4} d^{4 + \frac{n\gamma}{2}}  \lVert u_{\beta_{0}, \epsilon d^{2}, \Omega_{d}} \rVert_{\calL^{p}(\Omega_{d}, g)}^{2} C' \leqslant aT B (1 - \delta) d^{2} \lVert u_{\beta_{0}, \epsilon d^{2}, \Omega_{d}} \rVert_{\calL^{p}(\Omega_{d}, g)}^{2} \\
& \Leftrightarrow C' \leqslant aT B (1 - \delta) d^{2} \lVert u_{\beta_{0}, \epsilon d^{2}, \Omega_{d}} \rVert_{\calL^{p}(\Omega_{d}, g)}^{2} -  \tilde{C}_{4} d^{4 + \frac{n\gamma}{2}}  \lVert u_{\beta_{0}, \epsilon d^{2}, \Omega_{d}} \rVert_{\calL^{p}(\Omega_{d}, g)}^{2}.
\end{split}
\end{equation}
By Hypothesis II, 
we choose
\begin{equation}\label{EST18}
C' = \frac{1}{2} aT B (1 - \delta) d^{2} \lVert u_{\beta_{0}, \epsilon d^{2}, \Omega_{d}} \rVert_{\calL^{p}(\Omega_{d}, g)}^{2}.
\end{equation}
It follows that (\ref{EST17}) holds. By the argument above, we correspondingly choose
\begin{equation}\label{EST19}
C = \frac{\frac{1}{2} aT B (1 - \delta) d^{2} \lVert u_{\beta_{0}, \epsilon d^{2}, \Omega_{d}} \rVert_{\calL^{p}(\Omega_{d}, g)}^{2}}{a \int_{\Omega_{d}} \nabla_{g} u_{\beta_{0}, \epsilon d^{2}, \Omega_{d}} \cdot \nabla_{g} u_{\beta_{0}, \epsilon d^{2}, \Omega_{d}} + \int_{\Omega_{d}} \left( R_{g} - \beta_{0} \right) u_{\beta_{0}, \epsilon d^{2}, \Omega_{d}}^{2} \dvol}
\end{equation}
such that (\ref{EST5}) holds. Note that for fixed domain $ \Omega_{d} $ with fixed background metric $ g $ and fixed constants $ \beta_{0}, \epsilon $ (determined by (\ref{EST0}) and (\ref{Fix:eqn2})), the constant $ C $ is uniform for all $ \beta \in (0, \beta_{0}) $. Hence we showed that (\ref{PDE0a}) holds for dimension $ n \geqslant 5 $ with $ \phi = u_{\beta_{0}, \epsilon d^{2}, \Omega_{d}} + v $ estimated on $ \Omega_{d} $. Note that this $ \phi $ is independent of $ \beta $.
\medskip

The leftover case is $ n = 4 $. Using the second estimate of the perturbed Yamabe quotient in (\ref{PDE6}), the inequality (\ref{EST6a}) is revised by the following inequality
\begin{equation}\label{EST20}
\begin{split}
& \qquad 1 - \frac{aT\lVert u_{\beta_{0}, \epsilon d^{2}, \Omega_{d}} + v \rVert_{\calL^{p}(\Omega_{d}, g)}^{2}}{aT\lVert u_{\beta_{0}, \epsilon d^{2}, \Omega_{d}} \rVert_{\calL^{p}(\Omega_{d}, g)}^{2}} \\
& \leqslant \frac{aT \left( \lVert u_{\beta_{0}, \epsilon d^{2}, \Omega_{d}} \rVert_{\calL^{p}(\Omega_{d}, g)}^{2} - \lVert u_{\beta_{0}, \epsilon d^{2}, \Omega_{d}} + v \rVert_{\calL^{p}(\Omega_{d}, g)}^{2} \right) \cdot \left(1 - B_{0}d^{2} \lvert \log d \rvert \right)}{ \int_{\Omega_{d}} \nabla_{g} u_{\beta_{0}, \epsilon d^{2}, \Omega_{d}} \cdot \nabla_{g}u_{\beta_{0}, \epsilon d^{2}, \Omega_{d}} \dvol + \int_{\Omega_{d}} \left( R_{g} - \beta_{0} \right) u_{\beta_{0}, \epsilon d^{2}, \Omega_{d}}^{2} \dvol} \\
& \leqslant \frac{\left(1 - B_{0}d^{2} \lvert \log d \rvert \right) \cdot aT \lVert u_{\beta_{0}, \epsilon d^{2}, \Omega_{d}} \rVert_{\calL^{p}(\Omega_{d}, g)}^{2}}{ \int_{\Omega_{d}} \nabla_{g} u_{\beta_{0}, \epsilon d^{2}, \Omega_{d}} \cdot \nabla_{g}u_{\beta_{0}, \epsilon d^{2}, \Omega_{d}} \dvol + \int_{\Omega_{d}} \left( R_{g} - \beta_{0} \right) u_{\beta_{0}, \epsilon d^{2}, \Omega_{d}}^{2} \dvol}.
\end{split}
\end{equation}
By the same argument as above, we only need to show
\begin{equation}\label{EST21}
\begin{split}
& \qquad 2\int_{\Omega_{d}} \left( R_{g} - \beta \right) u_{\beta_{0}, \epsilon d^{2}, \Omega_{d}}v \dvol + 2\int_{\Omega_{d}} R_{g} \left( u_{\beta_{0}, \epsilon d^{2}, \Omega_{d}} - u_{\beta_{0}, \epsilon d^{2}, \Omega_{d}}^{1 - \gamma} \right) v \dvol \\
& \qquad + 4 \int_{\Omega_{d}} R_{g} v^{2} \dvol - \beta \int_{\Omega_{d}} v^{2} \dvol + \left( \beta_{0} - \beta \right) \int_{\Omega_{d}} u_{\beta_{0}, \epsilon d^{2}, \Omega_{d}}^{2} \dvol + C' \\
& \leqslant aT \lVert v \rVert_{\calL^{p}(\Omega_{d}, g)}^{2} - \left(1 - B_{0}d^{2} \lvert \log d \rvert \right) \cdot aT \lVert u_{\beta_{0}, \epsilon d^{2}, \Omega_{d}} \rVert_{\calL^{p}(\Omega_{d}, g)}^{2}.
\end{split}
\end{equation}
with some constant $ C' > 0 $ that is uniform in $ \beta \in (0, \beta_{0}) $. Analogously, it suffices to show with Hypotheses II that
\begin{equation}\label{EST22}
\begin{split}
& \qquad \tilde{C}_{4} d^{4 + \frac{n\gamma}{2}}  \lVert u_{\beta_{0}, \epsilon d^{2}, \Omega_{d}} \rVert_{\calL^{p}(\Omega_{d}, g)}^{2} + C' \leqslant aT B (1 - \delta) d^{2} \lVert u_{\beta_{0}, \epsilon d^{2}, \Omega_{d}} \rVert_{\calL^{p}(\Omega_{d}, g)}^{2} \\
& \Leftrightarrow C' \leqslant aT B_{0} (1 - \delta) d^{2} \lvert \log d \rvert \lVert u_{\beta_{0}, \epsilon d^{2}, \Omega_{d}} \rVert_{\calL^{p}(\Omega_{d}, g)}^{2}  - \tilde{C}_{4} d^{4 + \frac{n\gamma}{2}}  \lVert u_{\beta_{0}, \epsilon d^{2}, \Omega_{d}} \rVert_{\calL^{p}(\Omega_{d}, g)}^{2}.
\end{split}
\end{equation}
Therefore we can choose 
\begin{equation}\label{EST23}
C = \frac{ \frac{1}{2}aT(1 - \delta) B_{0} d^{2} \lvert \log d \rvert \lVert u_{\beta_{0}, \epsilon d^{2}, \Omega_{d}} \rVert_{\calL^{p}(\Omega_{d}, g)}^{2}}{a \int_{\Omega_{d}} \nabla_{g} u_{\beta_{0}, \epsilon d^{2}, \Omega_{d}} \cdot \nabla_{g} u_{\beta_{0}, \epsilon d^{2}, \Omega_{d}} + \int_{\Omega_{d}} \left( R_{g} - \beta_{0} \right) u_{\beta_{0}, \epsilon d^{2}, \Omega_{d}}^{2} \dvol}
\end{equation}
such that (\ref{EST5}) holds when $ n = 4 $. The conclusion thus follows.
Combining the analysis for both $ n \geqslant 5 $ and $ n = 4 $, we prove the Proposition \ref{PDE:prop1}.
\medskip

\begin{proposition}\label{PDE:thm1}
Let $ \Omega \subset \R^{n}, n \geqslant 4 $ equipped a Riemannian metric $ g $ such that the Weyl tensor of $ g $ does not vanish identically. Assume $ \Omega $ is small enough such that $ g $ has a global parametrization in terms of coordinates on $ \Omega $. Furthermore we assume $ R_{g} < 0 $ on $ \bar{\Omega} $. For any positive constant $ \lambda $, the Dirichlet problem (\ref{PDE0}) has a positive, smooth solution $ u \in \calC^{\infty}(\Omega) \cap H_{0}^{1}(\Omega, g) \cap \calC^{0}(\bar{\Omega}) $.
\end{proposition}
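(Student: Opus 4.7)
The plan is to mirror the proof of Proposition \ref{per:prop1}, substituting the improved local estimate of Proposition \ref{PDE:prop1} for Lemma \ref{per:lemma1}. Concretely, I would recast equation (\ref{PDE0}) as the Euler--Lagrange equation of the functional $J^{*}$ from (\ref{per:eqn6}) with $\beta = 0$, corresponding to the choices $a_{ij}(x) = a\sqrt{\det g}\, g^{ij}(x)$, $b(x) = \lambda\sqrt{\det g(x)}$, and $f(x,u) = -\sqrt{\det g(x)}\, R_{g}(x)\, u$ in the framework of Theorem \ref{per:thm1}, so that the existence of a positive solution reduces to verifying $\kappa < \kappa_0$.

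I would next verify hypotheses (P1)--(P4) of Theorem \ref{per:thm1}. (P1) is uniform ellipticity of $g$ on the compact set $\bar{\Omega}$; (P2) is immediate since $p - 1 > 1$ while $f$ is linear in $u$; (P3) holds upon further shrinking $\Omega$, because the first Dirichlet eigenvalue of $-a\Delta_{g}$ on a small domain scales like $(\mathrm{diam}\,\Omega)^{-2}$ and can be made as large as needed, while $f(x,u)/u = \sqrt{\det g}\,\lvert R_{g} \rvert$ stays uniformly bounded on $\bar{\Omega}$; and (P4) is vacuous since $b(x) \geqslant \lambda \inf_{\bar{\Omega}}\sqrt{\det g} > 0$, so choosing $\sigma$ smaller than this positive infimum forces $\Omega(\sigma) = \emptyset$. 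The sign conditions $f(x,u) \geqslant 0$ and $f(x,u) = 0$ for $u \leqslant 0$ are immediate from $R_{g} < 0$ on $\bar{\Omega}$.

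The strict inequality $\kappa < \kappa_0$ is the only substantive content, and it is delivered directly by Proposition \ref{PDE:prop1}. Repeating the algebraic reduction of \S2 (equations (\ref{per:eqn10})--(\ref{per:eqn11})) with $\beta = 0$, the inequality reduces to producing a compactly supported test function $\phi$ with $J_{1}(\phi) < aT$. Evaluating (\ref{PDE0a}) at $\beta = 0$ gives exactly $J_{1}(\phi) \leqslant J_{0, \beta_{0}, \Omega_{d}}(u_{\beta_{0}, \epsilon d^{2}, \Omega_{d}}) < aT$ with $\phi = u_{\beta_{0}, \epsilon d^{2}, \Omega_{d}} + v$ constructed via the super-local analysis. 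Theorem \ref{per:thm1} then yields a positive $u \in \calC^{\infty}(\Omega) \cap \calC^{0}(\bar{\Omega})$ with $J^{*}(u) \leqslant \kappa$; standard $H^{s}$ elliptic regularity, Schauder estimates, and boundary regularity promote this to $u \in \calC^{\infty}(\Omega) \cap H_{0}^{1}(\Omega, g) \cap \calC^{0}(\bar{\Omega})$. To accommodate arbitrary $\lambda > 0$, I would invoke the scaling $u \mapsto Ku$, which converts a solution at $\lambda_{0}$ into one at $\lambda_{0} K^{2 - p}$; since $p > 2$, varying $K \in (0, \infty)$ sweeps out all positive values of $\lambda$.

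The main obstacle --- that at $\beta = 0$ there is no perturbation term to close the estimate for $n = 4, 5$, where Aubin's classical test function alone fails --- has already been absorbed into Proposition \ref{PDE:prop1} via the corrected test function $u_{\beta_{0}, \epsilon d^{2}, \Omega_{d}} + v$ built from the super-local equation (\ref{PDE1}). Once that uniform-in-$\beta$ estimate is in hand, the current proposition is essentially a cosmetic restatement of the variational machinery with the perturbation turned off; no separate limiting argument in $\beta$ is needed, since Proposition \ref{PDE:prop1} already furnishes a single test function valid at $\beta = 0$.
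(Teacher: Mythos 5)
Your proposal is correct and follows essentially the same route as the paper's own proof: invoke Proposition \ref{PDE:prop1} to produce the test function $\phi$ that verifies $\kappa < \kappa_0$, then conclude via Theorem \ref{per:thm1} and standard regularity. You flesh out the verification of (P1)--(P4) and the $\lambda$-scaling more explicitly than the paper does, while the paper instead records a preliminary rescaling of the metric to force $R_g < -1$ (used inside the proof of Proposition \ref{PDE:prop1}); these are presentational differences, not differences in substance.
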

\begin{proof}
Scaling the metric, we easily get $ R_{g} < - 1 $. By Proposition \ref{PDE:prop1}, we can take $ \phi $ as the test function so that all hypotheses of Theorem \ref{per:thm1} holds. It follows that (\ref{PDE0}) admits a positive, smooth solution in $ \Omega $. The regularity of $ u $ is exactly the same as in Proposition \ref{per:prop1}.
\end{proof}

\section{The Global Analysis: Yamabe Problem on Compact Manifolds and Non-Compact Manifolds}
As we shown in \S3, we found a new test function $ \phi $, supported in some small enough domain $ \Omega_{d} $, such that
\begin{equation}\label{GA:eqn1}
\frac{a \int_{\Omega_{d}} \lvert \nabla_{g} \phi \rvert^{2} \dvol + \int_{\Omega_{d}} R_{g} \phi^{2} \dvol}{\left(\int_{\Omega_{d}} \phi^{p} \dvol \right)^{\frac{2}{p}}} < aT.
\end{equation}
Note that we need $ R_{g} < 0 $ in $ \bar{\Omega_{d}} $ and $ \dim \Omega_{d} \geqslant 4 $. This functional is exactly the Yamabe quotient. The inequality $ Q(\phi) < \lambda(\mathbb{S}^{n}) = aT $ says that local information can be used to verify some global property in the Yamabe problem. In this section, we first show that the local methods can be applied to a class of compact manifolds with dimensions at least $ 4 $ (not $ 6 $!), i.e. the positive mass theorem, or the global information, is not required. Then we show that the Yamabe problem can be resolved for a class of complete non-compact manifolds.
\medskip

\subsection{Yamabe Problem on Compact Manifolds}
Let $ (M, g) $ be a compact Riemannian manifold. Aubin's local test function cannot be applied to the cases $ \dim M = 3, 4, 5 $ or $ M $ is locally conformally flat, provided that $ \lambda(M) > 0 $. We show that the local test method can be extended to manifolds with dimension at least $ 4 $, not locally confomrally flat and having positive Yamabe invariant. This new result says that the topological invariant--the dimension--characterizes the necessity of global method when $ \dim M = 3 $. When $ \dim M \geqslant 4 $, we only need geometrical property--the vanishing/non-vanishing of Weyl tensor--to classify the local and global methods.

When $ \lambda(M) > 0 $, we cannot expect $ R_{g} < 0 $ at arbitrary points of $ M $. We need the following result to make sure that the hypotheses in Proposition \ref{PDE:prop1} are satisfied, where we need $ R_{g} < 0 $ on a neighborhood of the support of the test function $ \phi $.
\begin{proposition}\label{GA:prop1} Let $ (M, g) $ be a compact manifold with $ n = \dim M \geqslant 3 $. Given a point $ P \in M $, there exists a smooth function $ H \in \calC^{\infty}(M) $ satisfying $ H(P) < 0 $, such that $ H $ can be realized as the scalar curvature function for some metric pointwise conformal to $ g $.
\end{proposition}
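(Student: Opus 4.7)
The plan is to reduce the statement to constructing a positive smooth function $u$ on $M$ for which the conformally transformed metric $\tilde{g} = u^{p-2}g$ has negative scalar curvature at $P$. By the conformal transformation rule (\ref{intro:eqn1a}),
$$R_{\tilde{g}} = u^{-(p-1)}\bigl(-a\Delta_g u + R_g u\bigr),$$
so it suffices to produce $u \in \calC^\infty(M)$ with $u>0$ everywhere and $(-a\Delta_g u + R_g u)(P) < 0$. Setting $H := R_{\tilde g}$ then gives a smooth function on $M$ with $H(P)<0$, realized as the scalar curvature of $\tilde{g}\in[g]$.

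The construction of $u$ will be perturbative. First I would pick a smooth nonnegative function $\phi$ on $M$ with $\phi(P)=0$ that admits a non-degenerate local minimum at $P$. Under the sign convention $\Delta_g = -d^*d$, such a $\phi$ satisfies $\Delta_g\phi(P)>0$ (in a local chart this reduces to the fact that the ordinary Hessian of $\phi$ at a non-degenerate strict minimum has positive trace). Then $u_t := 1 + t\phi$ is smooth, pointwise bounded below by $1$, and
$$\bigl(-a\Delta_g u_t + R_g u_t\bigr)(P) = -at\,\Delta_g\phi(P) + R_g(P).$$
Choosing $t$ sufficiently large, depending only on $R_g(P)$ and $\Delta_g\phi(P)$, makes this expression strictly negative, which is all that is needed.

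To produce $\phi$ concretely I would work in Riemannian normal coordinates centered at $P$ on some geodesic ball $B(P,r)$; there one has $\Delta_g|x|^2\big|_{x=0} = g^{ij}(0)\,\partial_i\partial_j|x|^2 = 2n > 0$. Taking a cutoff $\chi\in\calC^\infty_c(B(P,r))$ with $\chi\equiv 1$ on $B(P,r/2)$ and setting $\phi := \chi(x)\,|x|^2 + (1-\chi(x))\,c$ inside $B(P,r)$ (for some constant $c>0$), extended by $c$ outside $B(P,r)$, yields a smooth $\phi\geq 0$ on $M$ with $\phi(P)=0$ and $\phi\equiv|x|^2$ in a neighborhood of $P$, so $\Delta_g\phi(P)=2n$. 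The construction is essentially routine; the only point requiring attention is ensuring $u_t>0$ globally, which is guaranteed by the global nonnegativity of $\phi$ built in through the gluing. I do not foresee any substantive obstacle.
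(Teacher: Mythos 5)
Your argument is correct, and it is a genuinely different and substantially more elementary route than the one taken in the paper. You construct the conformal factor $u$ explicitly as $u_t=1+t\phi$ with $\phi\geqslant 0$ smooth, vanishing to second order at $P$ with $\Delta_g\phi(P)>0$, and take $t$ large; since $u_t(P)=1$ one immediately gets $(-a\Delta_g u_t+R_g u_t)(P)=-at\,\Delta_g\phi(P)+R_g(P)\to-\infty$ as $t\to\infty$, while positivity of $u_t$ on all of $M$ is automatic. The paper instead builds a mean-zero right-hand side $F$ that is very negative at $P$ while small in $H^{s-2}$, solves the linear problem $-a\Delta_g u'=F$ by Lax--Milgram, controls $\sup_M|u'|$ via Sobolev embedding and $H^s$-elliptic estimates, and then shifts $u=u'+\tfrac{C}{4}$ to get positivity. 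Your approach avoids solving a PDE and avoids all of the elliptic and Sobolev machinery; for the proposition as stated there is no loss. What the paper's more laborious construction buys is a conformal factor that stays uniformly close to a constant (the shift $\tfrac{C}{4}$) with explicit $H^s$ control, which is structurally convenient when the same scheme is re-run with a Neumann boundary condition in Proposition~\ref{NC:prop3}; but even there your ansatz $1+t\phi$ with $\phi$ compactly supported away from $\partial M$ would satisfy $\partial u/\partial\nu=0$ and work just as well. In short: your proof is valid and cleaner; you may wish to note, for the reader's benefit, that $u_t(P)=1$ (so the prefactor $u_t^{1-p}$ at $P$ is simply $1$) and that $\Delta_g\phi(P)>0$ is exactly the positive trace of the Hessian at a non-degenerate minimum, since $P$ is a critical point of $\phi$.
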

\begin{proof} We show this by constructing the smooth function $ H $ with desired property, such that the following PDE
\begin{equation}\label{GA:eqn2}
-a\Delta_{g} u + R_{g} u = Hu^{p-1} \; {\rm in} \; (M, g)
\end{equation}
admits a smooth, real, positive solution $ u $. Without loss of generality, we can assume $ \sup_{M} \lvert R_{g} \rvert \leqslant 1 $ for some $ g $ after scaling. 
\medskip 

To construct $ H $, we first construct a smooth function $ F \in \calC^{\infty}(M) $ such that (i) $ F $ is very negative at $ P \in M $; (ii) $ \int_{M} F \dvol = 0 $; and (iii) $ \lVert F \rVert_{H^{s - 2}(M, g)} $ is very small are satisfies simultaneously. Here $ s = \frac{n}{2} + 1 $ if $ n $ is even and $ s = \frac{n + 1}{2} $ if $ n $ is odd. For simplicity, we discuss the even dimensional case, the odd dimensional case is exactly the same except the order $ s $.

Pick up a normal ball $B_{P}(r) $ in $ M $ with radius $ r < 1 $ centered at $ P \in M $. Choose a function 
\begin{equation}\label{GA:eqn3}
f(P) = - C< -1, f \leqslant 0, \lvert f \rvert \leqslant C, f \in \calC^{\infty}(M), f \equiv 0 \; {\rm on} M \backslash B_{P}(r).
\end{equation}
Such an $ f $ can be chosen as any smooth bump function or cut-off function in partition of unity multiplying by $ - 1 $. For $ f $, we have
\begin{equation}\label{GA:eqn4}
\lvert \partial^{\alpha} f \rvert \leqslant \frac{C'}{r^{\lvert \alpha \rvert}}, \forall \lvert \alpha \rvert \leqslant \frac{n}{2} - 1 = s - 2.
\end{equation}
By (\ref{GA:eqn4}), we have
\begin{equation}\label{GA:eqn5}
\left\lvert \int_{M} f \dvol \right\rvert = \left\lvert \int_{B_{r}} f \dvol \right\rvert \leqslant C \text{Vol}_{g}(B_{r}) \leqslant C \cdot \Gamma r^{n}.
\end{equation}
Here $ C $ is the upper bound of $ \lvert f \rvert $. The hypothesis of the statement says that $ \lvert R_{g} \rvert \leqslant 1 $ everywehere, hence $ \Gamma $ depends only on the dimension of $ M $ for every geodesic ball centered at any point of $ M $, due to Bishop-Gromov comparison inequality. Specifically, the volume of a geodesic ball of radius $ r \ll 1 $ centered at $ p $ is given by $ Vol_{g}(B_{r}) = \left( 1 - \frac{S_{g}(0)}{6(n + 2)} r^{2} + O(r^{4}) \right) Vol_{e}(B_{r}) $. Thus $ \Gamma $ is fixed when $ (M, g) $ is fixed.
Now we define
\begin{equation}\label{GA:eqn6}
F : = f + \epsilon
\end{equation}
where $ 0 < \epsilon < C $ is determined later. First we show that for any $ C $, we can choose $ \epsilon $ arbitrarily small so that $ \int_{M} F \dvol = 0 $. Set
\begin{equation*}
\epsilon : = \frac{1}{\text{Vol}_{g}(M)} \int_{B_{P}(r)} -f \dvol \Rightarrow \int_{M} F \dvol = \int_{M} (f + \epsilon) \dvol = \int_{B_{P}(r)} f \dvol + \epsilon \int_{M} \dvol = 0.
\end{equation*}
As we have shown in (\ref{GA:eqn5}),
\begin{equation*}
\epsilon \leqslant \frac{1}{\text{Vol}_{g}(M)} \left\lvert \int_{B_{r}} f \dvol \right\rvert \leqslant \frac{1}{\text{Vol}_{g}(M)} C \Gamma r^{n} = O(r^{n}).
\end{equation*}
Thus if we choose $ r $ small enough then $ \epsilon $ is small enough.

Secondly we show that for any $ C, C' $, we can make $ \lVert F \rVert_{H^{s - 2}(M, g)} $ arbitrarily small. By Bishop-Gromov inequality and (\ref{GA:eqn4}), we observe that
\begin{align*}
\lVert F \rVert_{H^{s - 2}(M, g)}^{2} & = \int_{M} \lvert f + \epsilon \rvert^{2} \dvol + \sum_{1 \leqslant \lvert \alpha \rvert \leqslant s - 2} \int_{B_{P}(r)} \lvert \partial^{\alpha} f \rvert^{2} \dvol \\
& \leqslant C_{0} r^{n} + \sum_{1 \leqslant \lvert \alpha \rvert \leqslant s - 2} \int_{B_{P}(r)} \frac{(C')^{2}}{r^{n - 2}} \dvol \leqslant C_{0} r^{n} + \sum_{1 \leqslant \lvert \alpha \rvert \leqslant s - 2} \frac{(C')^{2}}{r^{n - 2}} \Gamma r^{n} \leqslant (C')^{2} C_{0}'r^{2}
\end{align*}
for some $ C_{0}' $ depends only on the the order $ s - 2 = \frac{n}{2} - 1 $, the lower bound of Ricci curvature of $ M $, and the diameter of $ (M, g) $. Therefore, for any $ C, C' $, we can choose $ r $ small enough so that $ \lVert F \rVert_{H^{s - 2}(M, g)}^{2} $ is arbitrarily small.

It follows that this smooth function $ F $ we constructed satisfies (i) - (iii). Now we construct $ u \in \calC^{\infty}(M) $. Let $ u' $ be the solution of
\begin{equation}\label{GA:eqn7}
-a\Delta_{g} u' = F \; {\rm in} \; (M, g).
\end{equation}
Due to Lax-Milgram \cite[Ch.~6]{Lax} and hypothesis (i) of $ F $, there exists such an $ u' $ that solves (\ref{GA:eqn7}). Since $ F \in \calC^{\infty}(M) $, standard elliptic regularity implies that $ u' \in \calC^{\infty}(M) $. By Sobolev embedding and $ H^{s} $-type elliptic regularity, we have fixed constants $ K' $ and $ C^{**} $ such that
\begin{align*}
\lvert u' \rvert \leqslant \lVert u' \rVert_{\calC^{0, \beta}(M)} \leqslant K' \lVert u' \rVert_{H^{s}(M, g)}, \frac{1}{2} - \frac{s}{n} = \frac{1}{2} - \frac{1}{2} - \frac{1}{n} < 0; \\
\lVert u' \rVert_{H^{s}(M, g)} \leqslant C^{*} \left( \lVert F \rVert_{H^{s - 2}(M, g)} + \lVert u' \rVert_{\calL^{2}(M, g)} \right) \leqslant C^{**} \lVert F \rVert_{H^{s - 2}(M, g)}.
\end{align*}
Note that the last inequality comes from the PDE (\ref{GA:eqn7}) by pairing $ u ' $ on both sides, and apply Poincar\'e inequality. However, we should have $ \int_{M} u' \dvol = 0 $ to apply Poincar\'e inequality, this can be done by taking $ u' \mapsto u' + \delta $ if necessary, and we still denote $ u' + \delta $, if necessary, the original function $ u' $. Thus $ K' $ depends only on $ n, \alpha, s, M, g $ and $ C^{**} $ depends only on $ s, -a\Delta_{g}, \lambda_{1}, M, g $, all of which are fixed once $ M, g, s $ are fixed.

Note that $ f(P) = - C $ for some $ C > 1 $ at the point $ P \in M $. First choose $ r $ small enough so that $ \epsilon < \frac{1}{2} $, it follows from (\ref{GA:eqn6}) that
\begin{equation}\label{GA:eqn8}
F(q) = f(q) + \epsilon \leqslant -\frac{C}{2}.
\end{equation}
We further shrink $ r $, if necessary, such that
\begin{equation*}
C^{**}K' \lVert F \rVert_{H^{s - 2}(M, g)} \leqslant  C^{**} K' \cdot C' \left( C_{0}' \right)^{\frac{1}{2}} r \leqslant \frac{C}{8}.
\end{equation*}
Thus by Sobolev embedding and elliptic regularity, it follows that
\begin{equation}\label{GA:eqn9}
\lvert u' \rvert \leqslant C^{**} K' \lVert F \rVert_{H^{s - 2}(M, g)} \leqslant \frac{C}{8} \Rightarrow u' \in [-\frac{C}{8}, \frac{C}{8}], \forall x \in M.
\end{equation}
We now define $ u $ to be
\begin{equation}\label{GA:eqn10}
u : = u' + \frac{C}{4}.
\end{equation}
By (\ref{GA:eqn9}), it is immediate to see that $ u > 0 $ on $ M $ and $ u \in \calC^{\infty}(M) $ due to definition and smoothness of $ u' $. In particular $ u \in [\frac{C}{8}, \frac{3C}{8}] $. Furthermore, $ u $ solves (\ref{GA:eqn7}). Finally, we define our $ H $ to be
\begin{equation}\label{GA:eqn11}
H : = u^{1 - p} \left( -a\Delta_{g} u + R_{g} u \right) \; {\rm on} \; M.
\end{equation}
Clearly $ H \in \calC^{\infty}(M) $. By $ -a\Delta_{g} u = F $, (\ref{GA:eqn8}), (\ref{GA:eqn9}) and (\ref{GA:eqn10}), we observe that
\begin{align*}
H(P) & = u(P)^{1- p}\left( -a\Delta_{g} u(P) + R_{g}(P) u(P) \right) \\
& = u(P)^{1- p}\left( F(P) + R_{g}(P) u(P) \right) \leqslant u(P)^{1 - p} \left(-\frac{C}{2} + \frac{3C}{8} \right) < 0.
\end{align*}
Based on our constructions of $ u $ and $ H $, (\ref{GA:eqn11}) says that the following PDE
\begin{equation*}
-a\Delta_{g} u + R_{g} u = Hu^{p-1} \; {\rm in} \; (M, g)
\end{equation*}
has a real, positive, smooth solution. It follows that $ H $ is the scalar curvature of the metric $ u^{p-2} g $ on $ M $.
\end{proof}
The Yamabe problem on $ (M, g) $ with $ \lambda(M) < 0 $ is the relatively easy case. By the new local test function and Proposition \ref{GA:prop1} above, we can extend Aubin's local method to non-locally-conformally-flat manifolds as low as dimension $ 4 $.
\begin{theorem}\label{GA:thm1}
The Yamabe problem on not-locally-conformally-flat compact manifolds $ (M, g) $ with $ \dim M \geqslant 4 $ and positive Yamabe invariant can be proved by a local method.
\end{theorem}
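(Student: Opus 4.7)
The strategy is the standard Aubin/Trudinger reduction: show that $\lambda(M,g)<\lambda(\mathbb{S}^n)=aT$, and then invoke the subcritical approximation together with a compactness argument to produce a minimizer of the Yamabe quotient, which yields a positive smooth solution of the Yamabe equation after regularity. Since the local test function estimate \eqref{GA:eqn1} produced in \S3 requires two local hypotheses---non-vanishing of the Weyl tensor at some point $P$ in the support of the test function, and negativity of the scalar curvature on a closed neighborhood of $P$---the entire proof reduces to arranging both hypotheses at some point of $M$ via a conformal change, and then localizing.

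First I would pick $P\in M$ at which the Weyl tensor $W_g(P)\neq 0$, which exists by the not-locally-conformally-flat hypothesis and $\dim M\geqslant 4$. By Proposition \ref{GA:prop1}, there exists $u_0\in\calC^{\infty}(M)$, $u_0>0$, such that the conformal metric $\tilde g=u_0^{p-2}g$ has scalar curvature $R_{\tilde g}(P)<0$. Since the Weyl tensor $W$ is a pointwise conformal invariant (up to raising an index), $W_{\tilde g}(P)\neq 0$ as well. By continuity of $R_{\tilde g}$ and $|W_{\tilde g}|^2$, I can fix a geodesic normal ball $\Omega_d:=B_{\tilde g}(P,d/2)$, taken as small as needed, on which simultaneously $R_{\tilde g}<0$ on $\overline{\Omega_d}$ and the Weyl tensor of $\tilde g$ does not vanish identically. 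This puts us exactly in the local setting of Proposition \ref{PDE:prop1}.

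Second, I would apply the test function construction of \S3 (specifically Proposition \ref{PDE:prop1} with $\beta=0$, via \eqref{PDE:EXT} and \eqref{PDE0a}) to $(\Omega_d,\tilde g)$, after first shrinking $d$ so that Hypotheses I and II of \S3.1 are satisfied and choosing $\epsilon,\beta_0$ as in \eqref{EST0}--\eqref{Fix:eqn2}. This produces a test function $\phi\in H_0^1(\Omega_d,\tilde g)$, compactly supported in $\Omega_d$, with
\begin{equation*}
\frac{a\int_{\Omega_d}|\nabla_{\tilde g}\phi|^2\,d\mathrm{Vol}_{\tilde g}+\int_{\Omega_d}R_{\tilde g}\phi^2\,d\mathrm{Vol}_{\tilde g}}{\left(\int_{\Omega_d}\phi^p\,d\mathrm{Vol}_{\tilde g}\right)^{2/p}}<aT.
\end{equation*}
Extending $\phi$ by zero to all of $M$ yields $\phi\in H^1(M,\tilde g)\cap \calC_c^{\infty}(M)$, so the displayed quantity above equals $Q_{\tilde g}(\phi)$ computed on $(M,\tilde g)$. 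Hence $\lambda(M,\tilde g)\leqslant Q_{\tilde g}(\phi)<aT=\lambda(\mathbb{S}^n)$. Since the Yamabe invariant is a conformal invariant, $\lambda(M,g)=\lambda(M,\tilde g)<\lambda(\mathbb{S}^n)$.

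With the strict inequality $\lambda(M,g)<\lambda(\mathbb{S}^n)$ in hand, the usual Trudinger--Aubin subcritical approximation scheme applies verbatim: for each $q\in(2,p)$ one obtains a positive smooth minimizer $u_q$ of the subcritical Yamabe functional, the family $\{u_q\}$ is uniformly bounded in $L^{\infty}$ because of the strict gap below the Sobolev threshold (the standard concentration-compactness / Moser iteration argument excludes bubbling), and one extracts an $L^{\infty}$-bounded weak $H^1$-limit which is a positive $\calC^{\infty}$ solution of the Yamabe equation by elliptic regularity. The main obstacle in this plan is precisely the conceptual one of aligning the local and global settings: the local test function of \S3 is designed for a domain satisfying $R<0$ pointwise, which is never achievable globally when $\lambda(M,g)>0$, so the decisive step is Proposition \ref{GA:prop1}, which allows one to pay the ``cost'' of $\lambda(M)>0$ by a preliminary conformal change that creates a scalar-negative neighborhood without destroying the non-vanishing Weyl tensor hypothesis. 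Once this passage from the global positive-invariant regime to a local scalar-negative regime is secured, the rest follows from the machinery already developed in \S2--\S3.
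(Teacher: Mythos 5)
Your proposal matches the paper's proof of Theorem~\ref{GA:thm1} essentially step by step: conformal change via Proposition~\ref{GA:prop1} to create $R_{\tilde g}<0$ on a neighborhood of the non-Weyl-flat point $P$, then Proposition~\ref{PDE:prop1} (with $\beta=0$) to produce a local test function $\phi$ supported in $\Omega_d$ with $Q_{\tilde g}(\phi)<aT=\lambda(\mathbb{S}^n)$, and finally the reduction to $\lambda(M)<\lambda(\mathbb{S}^n)$ via Theorem~A of \cite{PL}. The only difference is that you explicitly record two facts the paper uses silently---the conformal invariance of the Weyl tensor (so the non-vanishing hypothesis survives the conformal change) and the conformal invariance of $\lambda(M)$---and you sketch the Trudinger--Aubin subcritical scheme where the paper simply cites Theorem~A; none of this changes the logical structure.
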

\begin{proof}
By Theorem A of \cite{PL}, it suffice to show that $ \lambda(M) < \lambda(\mathbb{S}^{n}) $. By Proposition \ref{GA:prop1}, we can get a metric $ \tilde{g} $, conformal to $ g $ such that $ R_{\tilde{g}}(P) < 0 $ and hence negative nearby, for $ P \in M $ a point on which the Weyl tensor does not vanish. By Propsition \ref{PDE:prop1}, we can choose a test function $ \phi $, supported in a small enough region $ \Omega_{d} $ centered at $ P $, and $ R_{g} < 0 $ on $ \bar{\Omega}_{d} $, such that
\begin{equation*}
\lambda(M) \leqslant Q(\phi) < aT = \lambda(\mathbb{S}^{n})
\end{equation*}
with respect to the metric $ \tilde{g} $.
\end{proof}
\begin{remark}\label{GA:re1}
If we do not assume the nonvanishing of the Weyl tensor, the local test function cannot be constructed. Other than this restriction, we take advantage of using only the standard normal coordinates. In other words, we do not need the Green's function of the conformal Laplacian. It indicates that the local test function can be applied to non-compact manifolds without assuming the lower Ricci bound, but instead merely the geometric control of the volume growth and the scalar curvature.
\end{remark}

\subsection{Yamabe Problem on Non-Compact Manifolds}
In this subsection, we discuss the Yamabe problem on non-compact manifolds $ (M, g) $ without boundary, with $ \dim M \geqslant 3 $. It is equivalent to find a positive, smooth solution of the following partial differential equation
\begin{equation}\label{NC:eqn1}
-a\Delta_{g} u + R_{g} u = \lambda u^{p-1} \; {\rm on} \; M
\end{equation}
for some constant $ \lambda \in \R $. The negatively curved case has been resolved by Alives and McOwen \cite{Aviles-Mcowen}, they showed that for $ (M, g) $ either with $ R_{g} \leqslant 0 $ everywhere and not identically zero and with lower Ricci bound, or with $ R_{g} \leqslant \delta < 0 $ everywhere, there exists a complete metric $ \tilde{g} $ in the conformal class $ [g] $ such that $ R_{\tilde{g}} = - 1 $. A natural question is the Yamabe problem for positively curved non-compact spaces.

On compact manifolds (with or without boundary), the minimization of the Yamabe quotient (resp. relative Yamabe quotient) not only classifies the manifolds for metrics within conformal classes, but also convert the geometric problem of finding out a constant scalar curvature metric (cscK metric) to an algebraic problem of testing the Yamabe quotient. The minimization of the Yamabe quotient on compact manifolds is called the Yamabe invariant. It is natural to expect an analogy on non-compact manifolds. 

Kim \cite{Kim1, Wei} defined a notion called the Yamabe invariant at infinity for non-compact manifold $ (M, g) $: Taking a compact exhaustion $ \lbrace K_{i} \rbrace_{i \in \mathbb{N}} $ of $ M $, each of which are bounded subsets, denote
\begin{equation}\label{NC:eqn2}
\lambda_{\infty}(M) : = \lim_{i \rightarrow \infty} \lambda(M \backslash K_{i})
\end{equation}
where $ \lambda(M \backslash K_{i}) $ is defined to be
\begin{equation}\label{NC:eqn3}
\lambda(M \backslash K_{i}) = \inf_{u \in \calC_{c}^{\infty}(M \backslash K_{i})} \frac{a \lVert \nabla_{g} u \rVert_{\calL^{2}}^{2} + \int_{M \backslash K_{i}} R_{g} u^{2} \dvol}{\lVert u \rVert_{\calL^{p}}^{2}}.
\end{equation}
The definition (\ref{NC:eqn3}) for $ M \backslash K_{i} $ applies to any non-compact complete manifold, which is still denoted to be the Yamabe invariant. Wei \cite{Wei} mentioned that the definition of $ \lambda_{\infty}(M) $ is independent of the choice of the compact exhaustion. 

A basic fact about the Yamabe invariant at infinity and the usual Yamabe quotient says:
\begin{proposition}\label{NC:prop1}\cite[Lemma.~2.1]{Wei}
For any complete noncompact manifold $ (M, g) $,
\begin{equation}\label{NC:eqn3a}
-\frac{n-2}{4(n - 1)} \lVert (R_{g})_{-} \rVert_{\calL^{\frac{n}{2}}} \leqslant \lambda(M) \leqslant \lambda_{\infty}(M) \leqslant \lambda(\mathbb{S}^{n}).
\end{equation}
Here $ (R_{g})_{-} $ are the negative part of the scalar curvature function $ R_{g} $.
\end{proposition}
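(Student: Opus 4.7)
The bound is a chain of three independent inequalities, each attackable on its own. The leftmost inequality $-\frac{n-2}{4(n-1)}\lVert (R_g)_- \rVert_{\calL^{n/2}} \leqslant \lambda(M)$ is a direct H\"older estimate on the scalar curvature term of the Yamabe quotient. Writing $R_g \geqslant -(R_g)_-$ and applying H\"older with conjugate exponents $n/2$ and $n/(n-2) = p/2$ gives
\begin{equation*}
\int_M (R_g)_- u^2 \dvol \leqslant \lVert (R_g)_- \rVert_{\calL^{n/2}(M,g)} \lVert u \rVert_{\calL^p(M,g)}^{2}.
\end{equation*}
Dropping the non-negative gradient contribution and dividing by $\lVert u \rVert_{\calL^p}^2$ produces the desired bound after taking the infimum; the factor $\frac{n-2}{4(n-1)} = 1/a$ arises from Wei's normalization convention, in which the gradient energy of $Q$ carries coefficient one and the scalar curvature term is scaled by $1/a$.

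The middle inequality $\lambda(M) \leqslant \lambda_\infty(M)$ is monotonicity of the infimum under restriction of the test-function class. Every $u \in \calC_c^\infty(M \setminus K_i)$ has compact support strictly inside $M \setminus K_i$, so its extension by zero is still smooth and compactly supported in $M$, with identical Yamabe quotient; hence $\lambda(M) \leqslant \lambda(M \setminus K_i)$ for every $i$, and the claim follows by passing to the limit. For the rightmost inequality $\lambda_\infty(M) \leqslant \lambda(\mathbb{S}^n)$, I would fix, for each $i$, a point $p \in M \setminus K_i$ and a small geodesic normal ball $B_p(r) \subset M \setminus K_i$, and use Aubin's classical test function $u_\epsilon(x) = \varphi(x)/(\epsilon + \lvert x \rvert^2)^{(n-2)/2}$ with $\varphi$ a bump supported in $B_p(r)$. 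Since $g_{ij} = \delta_{ij} + O(\lvert x \rvert^2)$ in normal coordinates, the Yamabe quotient of $u_\epsilon$ on $(M,g)$ converges, as $\epsilon \to 0$, to the Euclidean sharp Sobolev ratio $aT = \lambda(\mathbb{S}^n)$, via the classical stereographic correspondence between $\mathbb{S}^n$ and $\R^n$. Hence $\lambda(M \setminus K_i) \leqslant \lambda(\mathbb{S}^n)$ for every $i$, and taking $i \to \infty$ completes the chain.

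The main obstacle is verifying the concentration limit in the last step: one must confirm that the scalar curvature integral $\int_{B_p(r)} R_g u_\epsilon^2 \dvol$ and the metric-perturbation terms in $\int \lvert \nabla_g u_\epsilon \rvert^2 \dvol$ are negligible compared to the Euclidean Dirichlet energy of $u_\epsilon$ as $\epsilon \to 0$. This is an Aubin-type calculation, essentially the very computation carried out in Appendix A of the paper, but here it is actually easier: only the non-strict inequality $\leqslant \lambda(\mathbb{S}^n)$ is required, so neither the Weyl-tensor nondegeneracy nor the sharp correction terms are needed, and the leading-order asymptotic suffices. Importantly, this argument places no restrictions on the global geometry of $M$; only the existence of some small normal ball outside each $K_i$ is used, which is automatic for any non-compact $M$.
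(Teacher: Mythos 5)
The paper itself offers no proof of this proposition; it is imported verbatim from Wei's work as \cite[Lemma~2.1]{Wei}, so there is no internal argument to compare against. Your reconstruction is the correct (and essentially the only natural) route: H\"older on the scalar-curvature term with conjugate exponents $\tfrac{n}{2}$ and $\tfrac{n}{n-2}$ after dropping the nonnegative gradient piece, monotonicity of the infimum under shrinking the class of admissible test functions for the middle inequality, and Aubin's concentration of $u_\epsilon$ near a point of $M\setminus K_i$ for the right-hand bound $\lambda(M\setminus K_i)\leqslant\lambda(\mathbb{S}^n)$, passing to the (monotone) limit. All three steps are sound.

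One point worth flagging. With the paper's own convention $Q(u)=\frac{a\int|\nabla u|^2+\int R_g u^2}{\|u\|_p^2}$, the H\"older argument you sketch yields $\lambda(M)\geqslant-\|(R_g)_-\|_{\calL^{n/2}}$, which for $n\geqslant 3$ (so $a>1$) is a \emph{weaker} lower bound than the stated $\lambda(M)\geqslant-\tfrac{1}{a}\|(R_g)_-\|_{\calL^{n/2}}$. You correctly anticipate this and attribute the discrepancy to Wei's normalization (gradient coefficient $1$, scalar curvature coefficient $1/a$); under Wei's $Q$ the H\"older bound produces exactly the stated factor $\tfrac{n-2}{4(n-1)}$. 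The inconsistency is thus in the paper's transcription of Wei's lemma alongside its own (different) definition of $\lambda(M)$, not in your argument — though if one insists on being literal with the paper's normalization throughout, the quoted inequality (\ref{NC:eqn3a}) as printed is not quite what H\"older delivers, and a reader expecting a proof in the paper's conventions should be told the bound should read $\lambda(M)\geqslant-\|(R_g)_-\|_{\calL^{n/2}}$ (and correspondingly $\lambda(\mathbb{S}^n)=aT$). Your explicit acknowledgment of the normalization dependence is exactly the right thing to say.
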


By Proposition \ref{NC:prop1}, a manifold $ (M, g) $ has $ \lambda_{\infty}(M) \geqslant 0 $ if $ R_{g} \geqslant 0 $ everywhere. Analogous to the compact case, the sign of $ \lambda_{\infty}(M) $ also classifies the non-compact manifolds. In this section, we are mainly interested in the positively curved space, the hard case. We assume that $ (M, g) $ is a non-compact, complete Riemannian manifold with $ \lambda_{\infty}(M) > 0 $, which is a generalization of positivity of scalar curvature. 

Wei \cite{Wei} stated the following result:
\begin{theorem}\label{NC:thm1}\cite[Thm.~1.1]{Wei}
Let $ (M, g) $ be a complete non-compact manifold of $ \dim M \geqslant 3 $ satisfying
\begin{equation}\label{NC:eqn4}
\lambda(M) < \lambda_{\infty}(M), \lambda_{\infty}(M) > 0.
\end{equation}
Fix a point $ O \in M $. Suppose that there exists a positive constant $ C $ such that $ R_{g} \geqslant - Cd(x)^{2} $ when $ d(x) : = d_{g}(x, O) $ is large. Then there exists a constant $ \rho_{0}(n, \lambda(M), \lambda_{\infty}(M)) > 0 $ such that, if $ \text{Vol}_{g}(B(O, r)) \leqslant C r^{n + \rho} $ for all large $ r $ and some $ \rho < \rho_{0} $, then the PDE (\ref{NC:eqn1}) admits a positive solution with $ \lambda = 1, 0, - 1 $ corresponding to $ \lambda(M) $ being positive, $ 0 $, and negative, respectively.
\end{theorem}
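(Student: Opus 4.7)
The plan is to run a direct calculus-of-variations argument for the Yamabe functional
\begin{equation*}
Q(u) = \frac{a\lVert \nabla_g u \rVert_{\calL^2(M,g)}^2 + \int_M R_g u^2 \dvol}{\lVert u \rVert_{\calL^p(M,g)}^2}
\end{equation*}
on $ H^1(M, g) \cap \calC_c^\infty(M) $, combined with a concentration--compactness analysis that separately rules out escape to infinity and bubble concentration, and then to rescale. First I would take a minimizing sequence $ \{u_i\} $ with $ \lVert u_i \rVert_{\calL^p(M, g)} = 1 $ and $ Q(u_i) \to \lambda(M) $. The quadratic curvature bound $ R_g \geqslant -Cd(x)^2 $ combined with the polynomial volume bound $ \text{Vol}_g(B(O,r)) \leqslant Cr^{n+\rho} $ forces $ (R_g)_{-} $ to lie in a weighted $ \calL^{n/2} $-space with controlled growth, which together with the global Sobolev inequality yields uniform $ H^1_{\mathrm{loc}} $-bounds on $ \{u_i\} $ and produces a weak limit $ u_i \rightharpoonup u $, strongly convergent in $ \calL^q_{\mathrm{loc}} $ for $ q < p $.

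The first obstacle is to show $ u \not\equiv 0 $, i.e.~to rule out escape to infinity. Here the strict gap $ \lambda(M) < \lambda_\infty(M) $ is decisive: by the very definition of $ \lambda_\infty(M) $, for every $ \varepsilon > 0 $ there is a radius $ R $ such that every test function supported in $ M \setminus B(O, R) $ has Yamabe quotient at least $ \lambda_\infty(M) - \varepsilon $. Splitting $ u_i = \chi_{\mathrm{in}} u_i + \chi_{\mathrm{out}} u_i $ with a smooth cut-off based at $ B(O, R) $, if the $ \calL^p $-mass of $ u_i $ were to migrate entirely to $ M \setminus B(O, R) $ then $ Q(u_i) $ would be bounded below by a quantity approaching $ \lambda_\infty(M) - \varepsilon > \lambda(M) $, which is impossible. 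The threshold $ \rho_0 = \rho_0(n, \lambda(M), \lambda_\infty(M)) $ enters precisely in the quantitative cut-off error: the volume growth rate must be slow enough that the Sobolev error produced by the cut-off across the annulus $ B(O, 2R) \setminus B(O, R) $ is dominated by the gap $ \lambda_\infty(M) - \lambda(M) $.

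The second obstacle is to rule out concentration (bubbling) at finite points. By Proposition \ref{NC:prop1}, $ \lambda_\infty(M) \leqslant \lambda(\mathbb{S}^n) $, so the hypothesis forces the classical strict Aubin inequality $ \lambda(M) < \lambda(\mathbb{S}^n) $; a standard application of the P.-L.~Lions concentration--compactness principle then prevents the formation of a Dirac-type bubble and, combined with the previous step, upgrades $ u_i \to u $ to strong convergence in $ \calL^p(M, g) $, so $ \lVert u \rVert_{\calL^p(M, g)} = 1 $ and $ u $ achieves the infimum. The Euler--Lagrange equation $ -a\Delta_g u + R_g u = \lambda(M) u^{p-1} $ then holds weakly on $ M $, and standard elliptic regularity together with the strong maximum principle (noting $ u \geqslant 0 $, $ u \not\equiv 0 $) produces a positive, smooth solution.

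Finally, if $ \lambda(M) \neq 0 $, a rescaling $ u \mapsto c\,u $ absorbs the absolute value of $ \lambda(M) $ into the right-hand side to give $ \lambda \in \{+1, -1\} $; if $ \lambda(M) = 0 $ the equation is already in the required form with $ \lambda = 0 $. The hard part is the no-escape-at-infinity step: the constant $ \rho_0 $ must be chosen so that the combined quadratic curvature decay and polynomial volume growth admit Sobolev and cut-off estimates on concentric annuli with constants tight enough to approach $ \lambda_\infty(M) $, so that the strict inequality $ \lambda(M) < \lambda_\infty(M) $ can be quantitatively exploited. Once this quantitative control on the tail is secured, the rest of the argument follows the classical Yamabe programme.
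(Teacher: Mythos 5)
This theorem is reproduced in the paper verbatim as \cite[Thm.~1.1]{Wei} and is used as a black box in the proof of Theorem~\ref{NC:thm2}; the paper gives no proof of it. There is therefore no argument in the paper for your sketch to be compared against, and you should simply be citing Wei here rather than reproving the statement.

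That said, there is a concrete gap in the first step of your reconstruction. You claim that the hypotheses $R_{g} \geqslant -Cd(x)^{2}$ and ${\rm Vol}_{g}(B(O,r)) \leqslant C r^{n+\rho}$ place $(R_{g})_{-}$ in a ``weighted $\calL^{n/2}$-space with controlled growth'' and that this yields uniform $H^{1}_{\rm loc}$-bounds on a normalized minimizing sequence. But these hypotheses do \emph{not} give $(R_{g})_{-} \in \calL^{n/2}(M)$: on the dyadic annuli $A_{k} = B(O,2^{k+1}) \setminus B(O,2^{k})$ one has
\begin{equation*}
\int_{A_{k}} (R_{g})_{-}^{\,n/2}\, \dvol \;\leqslant\; C^{n/2}\,(2^{k+1})^{n}\,{\rm Vol}_{g}\bigl(B(O,2^{k+1})\bigr) \;\lesssim\; 2^{k(2n+\rho)},
\end{equation*}
whose sum over $k$ diverges; Proposition~\ref{NC:prop1} then gives only the vacuous bound $\lambda(M) \geqslant -\infty$. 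Consequently the quotient $Q(u_i)\to\lambda(M)$ does not by itself prevent $\lVert\nabla u_i\rVert_{\calL^2}^2$ and $\int_M (R_g)_- u_i^2\,\dvol$ from diverging in tandem as $\calL^p$-mass drifts to infinity, so the uniform a~priori bound is not justified as written. Your no-escape-at-infinity step, which you invoke \emph{after} extracting a weak limit, would have to be woven into the a~priori estimate itself to close this circle, and that is almost certainly where the quantitative threshold $\rho_{0}(n,\lambda(M),\lambda_{\infty}(M))$ really lives, not merely in a later cut-off error. The remainder of your outline (the deduction $\lambda(M) < \lambda(\mathbb{S}^{n})$ from Proposition~\ref{NC:prop1} and the gap hypothesis, Lions concentration--compactness against bubbling, Euler--Lagrange, strong maximum principle, rescaling to $\lambda \in \{1,0,-1\}$) is a reasonable sketch of a direct-method argument, but for the actual proof you should consult \cite{Wei}.
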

It gives the existence result with lower bound of the scalar curvature, the volume growth control, and is classified by the sign of the Yamabe invariant. However, this technical result requires an algebraic condition (\ref{NC:eqn4}). Very few concrete examples are known to have this condition. Let's analyze the evaluation of $ \lambda(M) $ and $ \lambda_{\infty}(M) $. The following result, due to Kim \cite{Kim2}, gives the evaluation of $ \lambda_{\infty}(M) $ for a class of non-compact manifolds:
\begin{proposition}\label{NC:prop2}\cite[Thm.~3.1]{Kim2}
Let $ (M, g) $ be a complete non-compact manifold of $ \dim M \geqslant 3 $. If $ \lambda_{\infty}(M) < \lambda(\mathbb{S}^{n}) $, then $ (M, g) $ is not pointwise conformal to a subdomain of any compact Riemannian $ n $-manifold.
\end{proposition}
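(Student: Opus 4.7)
The natural strategy is an argument by contradiction. Suppose $(M,g)$ is pointwise conformal to an open subdomain $\Omega$ of a compact $n$-manifold $(N,h)$, via a diffeomorphism $\Phi:M\to\Omega$ with $\Phi^{*}h=\varphi^{p-2}g$ for some positive $\varphi\in\calC^{\infty}(M)$. The goal is to force $\lambda_{\infty}(M)=\lambda(\mathbb{S}^{n})$, which directly contradicts the hypothesis $\lambda_{\infty}(M)<\lambda(\mathbb{S}^{n})$.

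First I would invoke conformal invariance of the Yamabe functional: the substitution $u\mapsto (\varphi^{(n-2)/2}u)\circ\Phi^{-1}$ is a bijection of $\calC_{c}^{\infty}(U)$ onto $\calC_{c}^{\infty}(\Phi(U))$ preserving the Yamabe quotient, so $\lambda(U,g)=\lambda(\Phi(U),h)$ for every open $U\subset M$. Choosing a compact exhaustion $\{K_{i}\}$ of $M$ and setting $\tilde{K}_{i}:=\Phi(K_{i})$ produces a compact exhaustion of $\Omega$ inside $N$, hence
\begin{equation*}
\lambda_{\infty}(M)=\lim_{i\to\infty}\lambda(M\setminus K_{i},g)=\lim_{i\to\infty}\lambda(\Omega\setminus\tilde{K}_{i},h).
\end{equation*}
Because $\{\tilde{K}_{i}\}$ exhausts $\Omega$, for every $\epsilon$-neighborhood $N_{\epsilon}(\partial\Omega)$ of $\partial\Omega$ in $N$ the sets $\Omega\setminus\tilde{K}_{i}$ are eventually contained in $\Omega\cap N_{\epsilon}(\partial\Omega)$.

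It therefore suffices to show $\lambda\bigl(\Omega\cap N_{\epsilon}(\partial\Omega),h\bigr)\to\lambda(\mathbb{S}^{n})$ as $\epsilon\to 0$. The upper bound $\leqslant\lambda(\mathbb{S}^{n})$ is the standard Aubin bubble concentrated at an interior point. For the matching lower bound, I would cover the compact shell $\overline{N_{\epsilon}(\partial\Omega)}$ by finitely many $h$-normal coordinate balls $\{B_{k}\}$ of radius $O(\epsilon)$; on each $B_{k}$ the metric differs from Euclidean by $O(\epsilon^{2})$, so the sharp Euclidean Sobolev inequality $\|v\|_{\calL^{p}}^{2}\leqslant \tfrac{a}{\lambda(\mathbb{S}^{n})}\,\|\nabla v\|_{\calL^{2}}^{2}$ for $v\in\calC_{c}^{\infty}(B_{k})$ transfers with a multiplicative factor $1+o_{\epsilon}(1)$, while $\bigl|\int R_{h}v^{2}\,dV_{h}\bigr|\leqslant \|R_{h}\|_{\calL^{n/2}(B_{k})}\|v\|_{\calL^{p}}^{2}=o_{\epsilon}(1)\|v\|_{\calL^{p}}^{2}$ since $\|R_{h}\|_{\calL^{n/2}}$ over a set of volume $O(\epsilon^{n})$ vanishes with $\epsilon$. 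A partition of unity $\sum_{k}\chi_{k}^{2}=1$ subordinate to $\{B_{k}\}$, together with the Leibniz identity
\begin{equation*}
\sum_{k}|\nabla(\chi_{k}u)|^{2}=|\nabla u|^{2}+u^{2}\sum_{k}|\nabla \chi_{k}|^{2},
\end{equation*}
then assembles the local estimates into a global bound $Q_{h}(u)\geqslant \lambda(\mathbb{S}^{n})(1-o_{\epsilon}(1))$ for every $u\in\calC_{c}^{\infty}(\Omega\cap N_{\epsilon}(\partial\Omega))$.

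The principal obstacle is the reassembly at the critical exponent: $\sum_{k}\|\chi_{k}u\|_{\calL^{p}}^{p}$ does not directly dominate $\|u\|_{\calL^{p}}^{p}$, so naive summation loses the sharp constant. I would handle this by insisting on a cover of bounded multiplicity and exploiting the standard fact that the infimum of the Yamabe quotient on a shrinking set is attained in the limit by bubble-type test functions concentrating at a single point $q\in\partial\Omega$; a Lions-type concentration-compactness argument then reduces the lower bound to a single coordinate ball around $q$, where the clean Sobolev inequality applies. Once this is in place, sending $i\to\infty$ (hence $\epsilon\to 0$) gives $\lambda_{\infty}(M)\geqslant\lambda(\mathbb{S}^{n})$, and combined with the universal upper bound $\lambda_{\infty}(M)\leqslant\lambda(\mathbb{S}^{n})$ of Proposition~\ref{NC:prop1}, this produces the desired contradiction.
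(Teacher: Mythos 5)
Your overall strategy is correct and is the standard one: argue by contradiction via a conformal embedding $\Phi:M\to\Omega\subset N$ into a compact manifold, use conformal invariance of the Yamabe quotient to identify $\lambda_{\infty}(M)$ with $\lim_{i}\lambda(\Omega\setminus\tilde K_{i},h)$, and then show this limit must be $\lambda(\mathbb{S}^{n})$ because the sets $\Omega\setminus\tilde K_{i}$ have shrinking $h$-volume inside the fixed compact manifold $(N,h)$. (Note the paper does not prove this proposition itself; it cites Kim. Also a small inconsistency: with $\Phi^{*}h=\varphi^{p-2}g$ the conformal substitution is $u\mapsto(\varphi u)\circ\Phi^{-1}$, not $u\mapsto(\varphi^{(n-2)/2}u)\circ\Phi^{-1}$; the extra power would correspond to writing the conformal factor as $e^{2f}=\varphi^{2}$ rather than $\varphi^{p-2}$.)

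The genuine gap is exactly where you flag it. After the partition-of-unity decomposition you obtain $a\|\nabla u\|_{2}^{2}\geqslant(1-o_{\epsilon}(1))\lambda(\mathbb{S}^{n})\sum_{k}\|\chi_{k}u\|_{p}^{2}-C\epsilon^{-2}a\|u\|_{2}^{2}$, but the only control you have at the critical exponent is $\sum_{k}\|\chi_{k}u\|_{p}^{p}\leqslant\|u\|_{p}^{p}$, and there is no estimate in the needed direction $\sum_{k}\|\chi_{k}u\|_{p}^{2}\geqslant(1-o(1))\|u\|_{p}^{2}$ independent of the cover multiplicity. The proposed repair — ``a Lions-type concentration-compactness argument then reduces the lower bound to a single coordinate ball'' — is not carried out and is not a one-liner: you would still need to exclude the vanishing and dichotomy alternatives and to locate the concentration point before the single-ball Sobolev inequality applies. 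As written, this step is a claim, not a proof.

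There is a much shorter and cleaner route to the crucial lower bound that avoids the cover entirely. Use Aubin's almost-sharp Sobolev inequality on the fixed compact manifold $(N,h)$: for every $\eta>0$ there is $C_{\eta}$ with
\begin{equation*}
\|u\|_{\calL^{p}(N,h)}^{2}\leqslant\Bigl(\tfrac{1}{T}+\eta\Bigr)\|\nabla_{h}u\|_{\calL^{2}(N,h)}^{2}+C_{\eta}\|u\|_{\calL^{2}(N,h)}^{2},\qquad u\in H^{1}(N,h).
\end{equation*}
For $u\in\calC^{\infty}_{c}(U)$ with $U\subset N$, H\"older gives $\|u\|_{\calL^{2}}^{2}\leqslant\mathrm{Vol}_{h}(U)^{2/n}\|u\|_{\calL^{p}}^{2}$, and similarly $\bigl|\int_{U}R_{h}u^{2}\,dV_{h}\bigr|\leqslant\sup_{N}|R_{h}|\cdot\mathrm{Vol}_{h}(U)^{2/n}\|u\|_{\calL^{p}}^{2}$. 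Substituting and rearranging yields
\begin{equation*}
Q_{h}(u)\geqslant a\,\frac{1-C_{\eta}\mathrm{Vol}_{h}(U)^{2/n}}{T^{-1}+\eta}-\sup_{N}|R_{h}|\cdot\mathrm{Vol}_{h}(U)^{2/n},
\end{equation*}
which forces $\lambda(U,h)\to aT=\lambda(\mathbb{S}^{n})$ as $\mathrm{Vol}_{h}(U)\to0$. Applying this with $U=\Omega\setminus\tilde K_{i}$ closes the argument without any covering or concentration-compactness, and is almost certainly the route in \cite{Kim2}. I would replace the partition-of-unity step with this inequality.
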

We would like to use Proposition \ref{PDE:prop1} to give an upper bound $ \lambda(M) $, which requires the negativity of the scalar curvature. The following result says it is always possible locally:
\begin{proposition}\label{NC:prop3}
Let $ (M, \partial M, g) $ be a compact manifold with non-empty smooth boundary. Fix a point $ P \in M $ at which the Weyl tensor does not vanish. There exists a conformal metric $ \tilde{g} $ associated with scalar curvature $ \tilde{R} $ and mean curvature $ \tilde{h} $ such that $ \tilde{R}(P) < 0 $, and $ \text{sgn}(h_{g}) = \text{sgn}(\tilde{h}) $ pointwise on $ \partial M $.
\end{proposition}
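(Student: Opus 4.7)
The plan is to mimic the construction in Proposition \ref{GA:prop1}, replacing the Poisson problem there with a Neumann problem so as to preserve the sign of the mean curvature. Under the conformal change $\tilde{g} = u^{p-2}g$, the transformation law for the mean curvature is
\begin{equation*}
h_{\tilde{g}} = u^{-\frac{n}{n-2}}\left(\frac{2}{n-2}\frac{\partial u}{\partial \nu} + h_{g} u\right),
\end{equation*}
where $\nu$ is the outward unit normal on $\partial M$. Hence if $u > 0$ on $M$ and $\partial_{\nu}u \equiv 0$ on $\partial M$, then $h_{\tilde{g}} = u^{-2/(n-2)}\, h_{g}$, and the sign of the mean curvature is preserved pointwise. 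Our goal is therefore to construct such a conformal factor $u$ that additionally makes $-a\Delta_{g}u + R_{g}u$ strongly negative at $P$.

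Scale $g$ so that $\sup_{M}|R_{g}| \leqslant 1$. Choose a small normal ball $B_{P}(r) \subset M \setminus \partial M$ and a smooth cut-off $f \leqslant 0$ supported in $B_{P}(r)$ with $f(P) = -C$ for a large fixed $C > 1$ and $|f| \leqslant C$. Set
\begin{equation*}
\epsilon := -\frac{1}{\mathrm{Vol}_{g}(M)}\int_{M} f \dvol, \qquad F := f + \epsilon,
\end{equation*}
so that $\int_{M} F \dvol = 0$ and $\epsilon = O(r^{n})$ by volume comparison. Exactly as in the proof of Proposition \ref{GA:prop1}, we obtain $\lVert F \rVert_{H^{s-2}(M,g)} = O(r)$ with $s = \lceil n/2 \rceil + 1$.

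Next, solve the Neumann problem
\begin{equation*}
-a\Delta_{g}u' = F \ \text{in}\ M, \qquad \frac{\partial u'}{\partial \nu} = 0 \ \text{on}\ \partial M;
\end{equation*}
solvability is guaranteed by the compatibility condition $\int_{M} F \dvol = 0$, and the solution is smooth and unique modulo constants. Normalize so that $\int_{M} u' \dvol = 0$. Standard Neumann $H^{s}$-regularity combined with the Sobolev embedding $H^{s}(M,g) \hookrightarrow \calC^{0,\beta}(M)$ yields $\sup_{M}|u'| \leqslant K\lVert F \rVert_{H^{s-2}(M,g)} = O(r)$. Shrinking $r$ if necessary so that $\sup_{M}|u'| \leqslant C/8$ and $\epsilon \leqslant C/2$, define $u := u' + C/4$. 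Then $u \in [C/8,\, 3C/8]$ on $M$, $\partial_{\nu}u \equiv 0$ on $\partial M$, and since $|R_{g}| \leqslant 1$,
\begin{equation*}
-a\Delta_{g}u(P) + R_{g}(P)u(P) = F(P) + R_{g}(P)u(P) \leqslant -\tfrac{C}{2} + \tfrac{3C}{8} < 0.
\end{equation*}
Setting $\tilde{g} = u^{p-2}g$ gives $\tilde{R}(P) = u(P)^{1-p}\bigl(-a\Delta_{g}u(P) + R_{g}(P)u(P)\bigr) < 0$, while the Neumann condition forces $\tilde{h} = u^{-2/(n-2)} h_{g}$ on $\partial M$, so $\mathrm{sgn}(\tilde{h}) = \mathrm{sgn}(h_{g})$ pointwise.

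The main obstacle is the solvability constraint $\int_{M} F \dvol = 0$ for the Neumann problem, which is handled precisely by the small constant shift $\epsilon$; the crucial point is that this shift remains $O(r^{n})$, so it neither spoils the smallness of $\lVert F \rVert_{H^{s-2}}$ nor the strong negativity $F(P) \leqslant -C/2$ that drives the sign of $\tilde{R}(P)$. The Weyl-tensor hypothesis at $P$ is not used in this particular construction, but it is recorded here because Proposition \ref{NC:prop3} is intended to be chained with Proposition \ref{PDE:prop1}, whose local test-function argument requires nontrivial Weyl tensor at $P$; since the Weyl tensor is a conformal invariant, this condition is unaffected by passing from $g$ to $\tilde{g}$.
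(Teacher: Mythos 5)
Your proof is correct and follows essentially the same route as the paper's: construct a mean-zero forcing term $F$ exactly as in Proposition~\ref{GA:prop1}, solve the associated Neumann problem $-a\Delta_g u' = F$ with $\partial_\nu u' = 0$ so the conformal factor $u = u' + C/4$ has vanishing normal derivative (hence $\tilde{h} = u^{2/p+1}h_g$ has the same sign as $h_g$), and then verify $\tilde{R}(P) < 0$ from the smallness of $\sup_M|u'|$ and the strong negativity $F(P)\leqslant -C/2$. Your closing remark that the Weyl-tensor hypothesis is unused here but carried along for compatibility with Proposition~\ref{PDE:prop1} is an accurate observation, even though the paper does not spell it out.
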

\begin{proof} By scaling we can, without loss of generality, assume that $ \lvert R_{g} \rvert \leqslant 1 $ on $ M $.
Based on exactly the same construction in Proposition \ref{GA:prop1}, there exists a smooth function $ F \in \calC^{\infty}(M) $ such that (i) $ \int_{M} F \dvol = 0 $; (ii) $ F $ is very negative at $ P \in M $; (iii) $ \lVert F \rVert_{H^{s - 2}(M, g)} $ is small enough, here $ s = \frac{n}{2} + 1 $ if $ n $ is even and $ s = \frac{n + 1}{2} $ if $ n $ is odd. The largeness and smallness will be determined later. Consider the following linear PDE with Neumann boundary condition
\begin{equation}\label{NC:eqn5}
-a\Delta_{g} u' = F \; {\rm in} \; M, \frac{\partial u'}{\partial \nu} = 0 \; {\rm on} \; \partial M.
\end{equation}
By standard elliptic theory, we conclude that there exists $ u' \in \calC^{\infty}(M) $ solves (\ref{NC:eqn5}) uniquely up to constants. By standard elliptic regularity, we conclude that
\begin{equation*}
\lVert u' \rVert_{H^{s}(M, g)} \leqslant C^{**}\left( \lVert F \rVert_{H^{s - 2}(M, g)} + \lVert u' \rVert_{\calL^{2}(M, g)} \right).
\end{equation*}
Pairing both side of (\ref{NC:eqn5}) by $ u' $, we conclude that
\begin{equation*}
\lVert u' \rVert_{\calL^{2}(M, g)} \leqslant C_{0} \lVert \nabla_{g} u' \rVert_{\calL^{2}(M, g)} \leqslant C_{0} \lVert F \rVert_{\calL^{2}(M, g)}.
\end{equation*}
This can be done by taking $ u' \mapsto u' + \epsilon_{1} $ so that $ \int_{M} (u' + \epsilon_{1} ) \dvol = 0 $. Since $ u' + \epsilon_{1} $ also solves (\ref{NC:eqn5}) we assume without loss of generality that $ \int_{M} u' \dvol = 0 $ thus the Poincar\'e inequality holds. Therefore there exists some $ C_{1} $ such that
\begin{equation*}
\sup_{M} \lvert u \rvert \leqslant C_{1} \lVert F \rVert_{H^{s - 2}(M, g)}
\end{equation*}
with $ s = \frac{n}{2} + 1 $ when $ n $ is even or $ s = \frac{n + 1}{2} $ when $ n $ is odd. By the same argument of Proposition \ref{GA:prop1}, we can pick up some $ C > 1 $ we choose can choose $ F $ such that
\begin{equation*}
F(q) \leqslant - \frac{C}{2}, \lvert u' \rvert \leqslant \frac{C}{8} \; {\rm in} \; M.
\end{equation*}
Finally we choose
\begin{equation}\label{NC:eqn6}
u : = u' + \frac{C}{4}.
\end{equation}
It follows that this positive function $ u \in \left[ \frac{C}{8}, \frac{3C}{8} \right], u \in \calC^{\infty}(M) $ and $ u $ solves (\ref{NC:eqn5}) since constant functions are in the kernel of $ -a\Delta_{g} $ with Neumann boundary condition. Using the choice of $ u $ in (\ref{NC:eqn6}), we define
\begin{equation}\label{NC:eqn7}
\begin{split}
\tilde{h} & = \frac{p - 2}{2} u^{\frac{2}{p}} \left( \frac{\partial u}{\partial \nu} + \frac{2}{p - 2} h_{g} u \right) \; {\rm on} \; \partial M; \\
\tilde{R} & = u^{1 - p} \left( -a\Delta_{g} u + R_{g} u \right) \; {\rm in} \; M.
\end{split}
\end{equation}
The first line in (\ref{NC:eqn7}) says
\begin{equation*}
\tilde{h} = \frac{p - 2}{2} u^{\frac{2}{p}} \left( 0 + \frac{2}{p - 2} h_{g} u \right) =  u^{\frac{2}{p} + 1} h_{g}.
\end{equation*}
Since $ u > 0 $, hence at each point of $ \partial M $, the sign of $ \tilde{h} $ is the same as the sign of $ h_{g} $. From the second line of (\ref{NC:eqn7}), we see that at the point  $ q $, 
\begin{align*}
\tilde{R}(P) & = u(P)^{1-p} \left( F(P) + R_{g}(P) u_{q} \right) \leqslant u(P)^{1 - p} \left( F(P) + \sup_{M} \lvert R_{g} \rvert \sup_{M} \lvert u \rvert \right) \\
& \leqslant u(P)^{1 - p} \left(-\frac{C}{2} + \frac{3C}{8} \right) < 0.
\end{align*}
Lastly, we notice that since a real, positive $ u \in \calC^{\infty}(M) $ solves the boundary value problem (\ref{NC:eqn7}), there exists a conformal metric $ \tilde{g} = u^{p-2} g $ associated with $ \tilde{R} $ and $ \tilde{h} $, where $ \tilde{R} $ and $ \tilde{h} $ has desired properties. By Escobar's results of the boundary Yamabe problem \cite{ESC, ESC2}, $ \tilde{R} $ and $ \tilde{h} $ are realized as scalar curvature and mean curvature functions of $ \tilde{g} $, respectively.
\end{proof}
Now we can show that Yamabe problem can be resolved for a class of non-compact manifold with a geometric condition, instead of the algebraic condition (\ref{NC:eqn4}). In a word, every non-locally-conformally-flat non-compact complete manifold with dimension at least $ 4 $ admits a conformal metric with constant scalar curvature, provided that the manifold has conformal compactification.
\begin{theorem}\label{NC:thm2}
Let $ (M, g) $ be a complete non-compact manifold of $ \dim M \geqslant 4 $ such that $ R_{g} \geqslant 0 $ somewhere. In addition we assume that $ (M, g) $ is pointwise conformal to some subdomain of some compact manifold. Fix a point $ O \in M $. Suppose that there exists a positive constant $ C $ such that $ R_{g} \geqslant - Cd(x)^{2} $ when $ d(x) : = d_{g}(x, O) $ is large. Suppose also that there exists a constant $ \rho_{0}(n, \lambda(M), \lambda_{\infty}(M)) > 0 $ such that, if $ \text{Vol}_{g}(B(O, r)) \leqslant C r^{n + \rho} $ for all large $ r $ and some $ \rho < \rho_{0} $. If $ (M, g) $ is not locally conformally flat, then the PDE (\ref{NC:eqn1}) admits a positive solution with $ \lambda = 1 $ provided that $ \lambda(M) > 0 $.
\end{theorem}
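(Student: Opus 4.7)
The plan is to reduce the theorem to Theorem \ref{NC:thm1} by verifying its two nontrivial hypotheses, namely the strict inequality $\lambda(M) < \lambda_{\infty}(M)$ and the positivity $\lambda_{\infty}(M) > 0$. The geometric conditions on $g$ (the scalar curvature lower bound $R_{g} \geqslant -Cd(x)^{2}$ at infinity and the volume growth $\text{Vol}_{g}(B(O,r)) \leqslant Cr^{n+\rho}$) are already assumed on the nose, and upon verifying the algebraic conditions Wei's theorem produces a positive solution of (\ref{NC:eqn1}) with $\lambda = 1$. Throughout, I exploit that both $\lambda(M)$ and $\lambda_{\infty}(M)$ are conformal invariants, so I am free to pass to a convenient metric in $[g]$ for the local analysis without disturbing these numerical invariants.

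First I would pin down $\lambda_{\infty}(M)$ exactly. The hypothesis that $(M, g)$ is pointwise conformal to some subdomain of a compact Riemannian manifold, together with the contrapositive of Proposition \ref{NC:prop2}, forces $\lambda_{\infty}(M) \geqslant \lambda(\mathbb{S}^{n})$. Combined with the universal upper bound $\lambda_{\infty}(M) \leqslant \lambda(\mathbb{S}^{n}) = aT$ from (\ref{NC:eqn3a}), this yields
\[
\lambda_{\infty}(M) = \lambda(\mathbb{S}^{n}) = aT > 0,
\]
so positivity of $\lambda_{\infty}(M)$ is automatic and the problem reduces to proving $\lambda(M) < aT$.

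The main task is the strict inequality $\lambda(M) < aT$. Let $(\bar{N}, \bar{g})$ be a compact manifold containing $(M, g)$ as a conformal subdomain, and fix a point $P \in M$ at which the Weyl tensor does not vanish; such a $P$ exists because $M$ is not locally conformally flat and $\dim M \geqslant 4$. Apply Proposition \ref{GA:prop1} on the compact ambient manifold $\bar{N}$ at the chosen point $P$ to obtain a smooth function $v > 0$ on $\bar{N}$ such that the conformally related metric $\tilde{g} = v^{p-2}\bar{g}$ satisfies $R_{\tilde{g}}(P) < 0$. By continuity, $R_{\tilde{g}} < 0$ on a geodesic ball $\Omega_{d}$ centered at $P$, which I shrink so that $\Omega_{d} \subset M$ and the smallness Hypotheses I and II of \S3 hold; nontriviality of the Weyl tensor on $\Omega_{d}$ is preserved since the Weyl tensor is a pointwise conformal invariant. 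Now Proposition \ref{PDE:prop1} applies on $(\Omega_{d}, \tilde{g})$ and supplies a test function $\phi \in \calC_{c}^{\infty}(\Omega_{d})$ whose Yamabe quotient with respect to $\tilde{g}$ is strictly less than $aT$. By conformal invariance of the Yamabe invariant,
\[
\lambda(M) = \lambda(M, [g]) \leqslant Q_{\tilde{g}}(\phi) < aT = \lambda_{\infty}(M).
\]
Together with the hypothesis $\lambda(M) > 0$, both conditions of Theorem \ref{NC:thm1} hold, and Wei's result gives the desired positive solution.

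The main obstacle is the third step: ensuring that Proposition \ref{GA:prop1}, applied to the compact ambient $\bar{N}$, produces a conformal metric whose restriction to a neighborhood of $P$ in $M$ simultaneously (a) has negative scalar curvature, (b) admits the global parametrization by normal coordinates required in \S3, and (c) satisfies the volume and diameter smallness Hypotheses I and II. One also has to be careful that the local test function $\phi$, constructed in the auxiliary conformal gauge $\tilde{g}$ adapted to the local analysis, genuinely certifies $\lambda(M) < aT$ for the original metric $g$; this is where conformal invariance of $\lambda(M)$ must be invoked cleanly. Apart from this bookkeeping, the argument is essentially the combination of Proposition \ref{PDE:prop1}, Proposition \ref{NC:prop2}, and Theorem \ref{NC:thm1}.
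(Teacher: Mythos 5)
Your proof is correct and the overall scaffold --- verify $\lambda_\infty(M)=\lambda(\mathbb{S}^n)$ via Proposition \ref{NC:prop2}, produce a local test function after a conformal change forcing $R<0$ near a Weyl-nonvanishing point, conclude $\lambda(M)<aT$ via Proposition \ref{PDE:prop1}, and hand off to Wei's Theorem \ref{NC:thm1} --- matches the paper's. The one place you diverge is in how you make the scalar curvature negative near $P$. The paper stays inside $M$: it picks a compact $K\subset M$ with smooth boundary containing $P$, invokes Proposition \ref{NC:prop3} (the compact-with-boundary version that also preserves the sign of the mean curvature on $\partial K$), and then extends the conformal factor to all of $M$ by a gluing step. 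You instead go to the ambient compact $\bar{N}$ and apply the closed-manifold Proposition \ref{GA:prop1} there, restricting the resulting conformal factor back to $U\cong M$. Your route is cleaner in two respects: it bypasses the somewhat hand-waved gluing, and it automatically yields a conformal factor bounded above and below on $M$ (since $v>0$ is continuous on the compact $\bar{N}$), which is exactly what one wants when invoking conformal invariance of $\lambda(M)$ and $\lambda_\infty(M)$. The small trade-off is that your argument tacitly requires the conformal compactification to be a \emph{closed} compact manifold so that \ref{GA:prop1} applies; the paper's internal-$K$ approach with \ref{NC:prop3} sidesteps any question about whether $\bar{N}$ itself has boundary. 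Under the usual convention that ``compact manifold'' means closed, your version is perfectly valid and arguably more economical, and you are right to flag explicitly the conformal invariance that the paper leaves implicit.
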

\begin{proof}
Let $ P \in M $ be the point at which the Weyl tensor does not vanish. If $ R_{g} < 0 $ we start with this metric. Otherwise, we choose a compact set $ K $ such that $ P \in K \subset M $ where $ \partial K $ is smooth. Then $ g |_{K} $ is a metric on $ K $. Applying Proposition \ref{NC:prop3}, we get a new metric $ \tilde{g} : = u^{p-2} g $, $ u : K \rightarrow \mathbb{R} $, positive and smooth with $ R_{\tilde{g}}(P) < 0 $. Extend $ u $ to a bounded, smooth function on $ M $ by standard gluing technique, we have a new metric $ \tilde{g} $ on $ M $ with $ R_{\tilde{g}}(P) < 0 $. 

Then by Proposition \ref{PDE:prop1}, there exists a local test function with normal coordinates chosen near $ P $ such that $ Q(\phi) < \lambda(\mathbb{S}^{n}) $. It follows that
\begin{equation*}
\lambda(M) \leqslant Q(\phi) < \lambda(\mathbb{S}^{n}).
\end{equation*}
Using Proposition \ref{NC:prop2}, $ (M, g) $, and so is $ (M, \tilde{g}) $, is pointwise conformal to subset of a compact manifold implies $ \lambda_{\infty}(M) = \lambda(\mathbb{S}^{n}) $. Thus the algebraic condition (\ref{NC:eqn4}) holds.

With all global scalar curvature and volume growth hypotheses, we conclude by Theorem \ref{NC:thm2} that $ M $ admits a metric, pointwise conformal to $ g $, with positive constant scalar curvature.
\end{proof}
\begin{remark}\label{NC:re1}
(i) We do not know whether the new metric that admits the constant scalar curvature is a complete metric or not.

(ii) Our result keeps the geometric restrictions at infinity as loose as possible, in particular, we do not impose any condition on Ricci or Riemannian curvature tensor, neither the injectivity radius, etc.

(iii) Assuming $ (M, g) $ pointwise conformal to a subset of some compact manifold implies $ \lambda_{\infty}(M) > 0 $. For manifolds with $ \lambda(M) \leqslant 0 $, the algebraic condition (\ref{NC:eqn4}) holds trivially. 

(iv) One concrete example for a non-compact manifold that is conformally embedded into a compact manifold is when $ M $ is $ \R^{n} $ minus a unit ball $ B_{1} $, equipped with Euclidean metric. By stereographic projection, $ M $ is conformally diffeomorphic to $ \mathbb{S}_{+}^{n} $, it follows from Proposition \ref{NC:prop2} that $ \lambda_{\infty}(\R^{n} \backslash B_{1}) = \lambda(\mathbb{S}^{n}) $. 
\end{remark}
\medskip

\section{Conflict of Interest Statement}
On behalf of all authors, the corresponding author states that there is no conflict of interest.

\appendix

\section{Proof of Lemma \ref{per:lemma1}}
In this Appendix, we proof Lemma \ref{per:lemma1} in \S2. Without loss of generality, we choose $ \Omega = B_{0}(r) $ be a ball of radius $ r $ and centered at $ 0 $. We interchangeably use $ \Omega $ and $ B_{0}(r) $ when there will be no confusion. Furthermore, we choose $ \varphi_{\Omega}(x) = \varphi_{\Omega} \left( \lvert x \rvert \right) $ to be any radial bump function supported in $ \Omega $ such that $  \varphi_{\Omega}(x) \equiv 1 $ on $ B_{0} \left( \frac{r}{2} \right) $. Therefore, the test function we choose in $ \Omega $ is
\begin{equation}\label{APP:eqnt0}
\phi_{\epsilon, \Omega}(x) = \frac{\varphi_{\Omega}(\lvert x \rvert)}{\left(\epsilon + \lvert x \rvert^{2}\right)^{\frac{n - 2}{2}}}, n \geqslant 4.
\end{equation}
We will simply denote $ \phi_{\epsilon, \Omega}  = u $ when there would be no confusion. 
\medskip

Denote
\begin{align*}
K_{1} & = (n - 2)^{2} \int_{\R^{n}} \frac{\lvert y \rvert^{2}}{( 1 + \lvert y \rvert^{2} )^{n}} dy; \\
K_{2} & =\left( \int_{\R^{n}} \frac{1}{( 1 + \lvert y \rvert^{2} )^{n}} dy \right)^{\frac{2}{p}}; \\
K_{3} & = \int_{\R^{n}} \frac{1}{(1 + \lvert y \rvert^{2})^{n - 2}} dy.
\end{align*}
Note that $ K_{1}, K_{2} $ are integrable when $ n \geqslant 4 $, while $ K_{3} $ is only integrable for $ n \geqslant 5 $. The best Sobolev constant $ T $ satisfies
\begin{equation}\label{APP:eqnt4}
T = \frac{K_{1}}{K_{2}}, n \geqslant 4.
\end{equation}
\medskip

\begin{theorem*}\label{APP:thm1} Let $ \Omega = B_{0}(r) $ for small enough $ r $ with $ \dim \Omega \geqslant 4 $. Let $ \beta > 0 $ be any negative constant and $ T $ be the best Sobolev constant in (\ref{APP:eqnt4}). Assume that $ g $ is not locally conformally flat in $ \Omega $ and $ R_{g} < 0 $ everywhere on $ \bar{\Omega} $. The quantity $ Q_{\epsilon, \Omega} $ satisfies
\begin{equation}\label{APP:eqn1}
Q_{\epsilon, \Omega} : =\frac{\lVert \nabla_{g} u_{\epsilon, \Omega} \rVert_{\calL^{2}(\Omega, g)}^{2} + \frac{1}{a} \int_{\Omega} \left(R_{g} - \beta \right) u_{\epsilon, \Omega}^{2} \sqrt{\det(g)} dx}{\lVert u_{\epsilon, \Omega} \rVert_{\calL^{p}(\Omega, g)}^{2}} < T.
\end{equation}
with the test functions $ u_{\epsilon, \Omega} = \phi_{\epsilon, \Omega} $ given in (\ref{APP:eqnt0}).
\end{theorem*}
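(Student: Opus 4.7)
The plan is to reduce the estimate to the Euclidean Brezis--Nirenberg computation plus controlled geometric perturbations coming from the metric expansion in normal coordinates. In standard geodesic normal coordinates centered at $0 \in \Omega$ one has the well-known expansions $g_{ij}(x) = \delta_{ij} + \tfrac{1}{3}R_{ikjl}(0)x^k x^l + O(|x|^3)$, $g^{ij}(x) = \delta_{ij} - \tfrac{1}{3}R^{ikjl}(0)x^k x^l + O(|x|^3)$, and $\sqrt{\det g}(x) = 1 - \tfrac{1}{6}R_{ij}(0)x^i x^j + O(|x|^3)$. Substituting Aubin's radial test function $u_{\epsilon,\Omega}$ into the three integrals defining $Q_{\epsilon,\Omega}$, changing variables $x = \sqrt{\epsilon}\,y$, and expanding each integrand to leading order produces, after cancellation of the Euclidean pieces, a numerator of the form $\epsilon^{-(n-2)/2}(K_1 + E_1(\epsilon))$ and a denominator $\epsilon^{-(n-2)/2}(K_2 + E_2(\epsilon))$; the task is to show the net correction is strictly negative.

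The case $n\geq 5$ is handled first. All moments of $U(y) = (1+|y|^2)^{-(n-2)/2}$ are integrable, so $\int_\Omega u_{\epsilon,\Omega}^2 \dvol = \epsilon^{(4-n)/2}K_3 + o(\epsilon^{(4-n)/2})$, and after division by $\|u_{\epsilon,\Omega}\|_{\calL^p}^2$ the term $-\tfrac{\beta}{a}\int u_{\epsilon,\Omega}^2 \dvol$ contributes to $Q_{\epsilon,\Omega}$ a definite negative quantity of order $\epsilon\beta K_3/(aK_2)$. The metric-induced corrections from the $g^{ij}$ and $\sqrt{\det g}$ factors, together with the $R_g(0)\int u^2$ piece, contribute at the same $O(\epsilon)$ order, and using $R_g < 0$ on $\bar\Omega$ one sees the $R_g u^2$ contribution reinforces the gain rather than opposing it. For the geometric error, one exploits the isotropic integration of the quadratic tensor $R_{ikjl}(0)y^k y^l$ against the radial function $U$, which reduces to a scalar multiple of $R_g(0)$ via $\int_{S^{n-1}}y^k y^l d\sigma = \tfrac{V_n}{n}\delta^{kl}$, and evaluates the scalar radial integrals through the identity $\int_{\R^n} |y|^{2m}(1+|y|^2)^{-k} dy = \tfrac{V_n}{2} B\left(\tfrac{n+2m}{2}, k - \tfrac{n+2m}{2}\right)$. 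Choosing $\epsilon$ so that $\beta \gg \epsilon^{1/2}$ (or more precisely $\beta K_3/K_2$ beats the fixed geometric coefficient) gives (\ref{APP:eqn1}).

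For $n=4$ the only change is that $K_3$ diverges, but the cut-off $\varphi_\Omega$ forces $\int_\Omega u_{\epsilon,\Omega}^2\dvol = V_4|\log\epsilon| + O(1)$, so the gain from $-\beta u^2$ is enhanced to order $\epsilon|\log\epsilon|$, while the metric error remains $O(\epsilon)$. The logarithmic factor then guarantees dominance of the negative correction for $\epsilon$ small. In both cases the same structural mechanism operates: the $\beta$-term plus the negative scalar curvature produces a lower-order negative contribution that strictly beats the symmetric $O(\epsilon)$ (resp.\ $O(\epsilon)$) geometric error, without requiring cubic-order cancellations from conformal normal coordinates.

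The main obstacle is the precise sign/constant bookkeeping for the $O(\epsilon)$ error in $n\geq 5$ and the $O(\epsilon)$ versus $O(\epsilon|\log\epsilon|)$ comparison in $n=4$. One must expand $|\nabla_g u_{\epsilon,\Omega}|^2 = g^{ij}\partial_i u\,\partial_j u$ carefully, because the metric correction is quadratic in $x$ and the gradient of $u_{\epsilon,\Omega}$ is itself homogeneous of degree $-(n-1)$ in $(|x|^2+\epsilon)$, so the metric-perturbation integrals depend delicately on the cancellation between moments. Once these integrals are expressed via Beta functions and the coefficient of $\epsilon$ (resp.\ $\epsilon|\log\epsilon|$) is shown to be negative after combining $R_g(0)$, the $\beta$-term, and the symmetric part of the curvature correction, the strict inequality $Q_{\epsilon,\Omega} < T$ follows, with the precise threshold for $\epsilon$ depending on $\beta$, $R_g$, and the diameter of $\Omega$.
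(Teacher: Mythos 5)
Your proposal follows the same structural route as the paper (normal-coordinate expansion, rescaling $x=\sqrt{\epsilon}\,y$, evaluating moments via Beta/Gamma functions, logarithmic enhancement for $n=4$), but it misidentifies the mechanism at the decisive step, and without that step the argument does not close.

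You claim that ``using $R_g<0$ on $\bar\Omega$ one sees the $R_g u^2$ contribution reinforces the gain rather than opposing it,'' and later that the coefficient of $\epsilon$ is ``shown to be negative after combining $R_g(0)$, the $\beta$-term, and the symmetric part of the curvature correction.'' That is not what happens. When you expand carefully, the $R_g(0)$-dependent $O(\epsilon)$ contributions enter with \emph{opposite} signs in different places: the zeroth-order piece $\tfrac{1}{a}R_g(0)\int u^2$ is negative (since $R_g(0)<0$), but the correction $A_{1,3}=-\tfrac{R_g(0)(n-2)^2}{6n}\int \tfrac{\varphi^2|y|^4}{(\epsilon+|y|^2)^n}\,dy$ coming from $\sqrt{\det g}$ acting on $|\nabla u|^2$ is \emph{positive}, and the correction $C_1$ in the denominator (from $\sqrt{\det g}$ acting on $u^p$) is also positive. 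The paper's actual proof shows that these $R_g(0)$-terms \emph{cancel exactly} in the quotient at order $\epsilon$. Establishing this cancellation is the crux: it requires the radial Laplacian identity $\Delta\bigl((1+|y|^2)^{-(n-2)}\bigr)=\tfrac{2(n-2)^2(|y|^2-\tfrac{n}{n-2})}{(1+|y|^2)^n}$ to convert one moment integral into another, the Gamma-function moment formula $\int_{\R^n}\tfrac{|y|^2}{(1+|y|^2)^n}\,dy=\tfrac{n(n-4)}{4(n-1)(n-2)}\int_{\R^n}\tfrac{dy}{(1+|y|^2)^{n-2}}$, and the Legendre duplication formula for $\Gamma$ to prove the exact identity $\tfrac{1}{n}T\,K_2^{-2/(n-2)}=n-2$, which is what forces the cancellation. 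Only after that does the strict inequality come entirely from the $-\beta$-term, which gives $-\epsilon\,\tfrac{(n-2)\beta}{4(n-1)}\tfrac{K_3}{K_2}+O(\epsilon^{3/2})$; the role of $R_g<0$ is to make several directional inequalities go the right way (extending $\Omega'$ to $\R^n$, and for $n=4$ bounding $A_{1,3}$ via $|y|^2/(\epsilon+|y|^2)<1$), not to ``reinforce'' a gain. Your $n=4$ sketch is essentially correct in spirit, but for $n\geqslant 5$ your reasoning as written would leave an uncontrolled $O(\epsilon)$ term of indeterminate sign, so the strict inequality would not follow.
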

The theorem will be proved in two cases: (i) $ n \geqslant 5 $; (ii) $ n = 4 $. In this argument, we may interchangeably use $ u_{\epsilon, \Omega}, \phi_{\epsilon, \Omega} $ or $ u $ if there has no confusion or we would like to emphasize some aspects.
\begin{proof} Let $ \Omega $ be a small enough geodesic ball centered at $ 0 $ with radius $ r $, as discussed above. It is well known that, e.g. \cite[\S5]{PL}, \cite[\S3]{ESC}, on normal coordinates $ \lbrace x_{i} \rbrace $ within a small geodesic ball of radius $ r $, we have:
\begin{equation}\label{APP:eqn1a}
\begin{split}
g^{ij} & = \delta^{ij} - \frac{1}{3} \sum_{r, s} R_{ijrs} x_{r}x_{s} + O\left(\lvert x \rvert^{3}\right); \\
\sqrt{\det(g)} & = 1 - \frac{1}{6} \sum_{i, j} R_{ij} x_{i}x_{j} + O\left( \lvert x \rvert^{3} \right).
\end{split}
\end{equation}
Here $ R_{ij} $ are the coefficients of Ricci curvature tensors. In local expression of $ \sqrt{\det(g)} $, the term $ \sum_{i, j} R_{ij} x_{i}x_{j} = \text{Ric}(x, x) $. Since the Ricci curvature tensor is a symmetric bilinear form, there exists an orthonormal principal basis $ \lbrace v_{1}, \dotso, v_{n} \rbrace $ on tangent space of each point in the geodesic ball such that its corresponding eigenvalues $ \lbrace \lambda_{i} \rbrace $ satisfies
\begin{equation*}
\sum_{i} \lambda_{i} = R_{g}(0).
\end{equation*}
Let $ \lbrace y_{1}, \dotso, y_{n} \rbrace $ be the corresponding normal coordinates with respect to the principal basis, the Ricci curvature tensor is diagonalized at the center $ 0 $, thus in a very small ball we have
\begin{equation}\label{APP:eqn1b}
\begin{split}
& R_{ij}(y) = \delta_{ij} R_{ij}(0) + o(1) = \delta_{ij} R_{ij}(0) + \sum_{\lvert \alpha \rvert = 1} \partial^{\alpha} R_{ij}(0) y^{\alpha} + O(\lvert y \rvert^{2}) \\
\Rightarrow & R_{g}(y) = R_{ij} g^{ij} = R_{g}(0) + \sum_{i, \lvert \alpha \rvert = 1} \partial^{\alpha} R_{ii}(0) y^{\alpha} + O(\lvert y \rvert^{2}); \\
& R_{ij}(y) = \delta_{ij} R_{ij}(0) + \sum_{\lvert \alpha \rvert = 1} \partial^{\alpha} R_{ij}(0) y^{\alpha} + O(\lvert y \rvert^{2}) \\
\Rightarrow & \sqrt{\det(g)} = 1 - \frac{1}{6} \sum_{i} \left(R_{ii}(0) + \sum_{\lvert \alpha \rvert = 1} \partial^{\alpha} R_{ii}(0) y^{\alpha} \right) y_{i}^{2} - \frac{1}{6} \sum_{i \neq j, \lvert \alpha \rvert = 1} \partial^{\alpha} R_{ij}(0) y^{\alpha} y_{i} y_{j} + o(\lvert y \rvert^{3}).
\end{split}
\end{equation}
The smallness $ o(1) $ above only depends on the smallness of $ r $ and is no larger than $ O(r) $ due to the Taylor expansion of the smooth functions $ R_{ij}(y) $ near $ y = 0 $:
\begin{equation*}
R_{ij}(y) = R_{ij}(0) + \sum_{\lvert \alpha \rvert = 1} \partial^{\alpha} R_{ij}(0) y^{\alpha} + O(\lvert y \rvert^{2}).
\end{equation*}
Since $ \varphi_{\Omega}(y) = \varphi_{\Omega}(\lvert y \rvert) $ is radial; note also that in normal coordinates $ \lbrace y_{i} \rbrace $, we also have
\begin{equation*}
s^{2} = \lvert y \rvert^{2} \Rightarrow g^{ss} \equiv 1 \Rightarrow \lvert \nabla_{g} u_{\epsilon, \Omega} \rvert^{2} = \lvert \partial_{s} u \rvert^{2}.
\end{equation*}
We apply (\ref{APP:eqn1b}) and have
\begin{equation}\label{APP:eqn2}
\begin{split}
\lVert \nabla_{g} u \rVert_{\mathcal{L}^{2}(\Omega, g)}^{2} & = \int_{\Omega} \lvert \nabla_{g} u \rvert^{2} \sqrt{\det(g)} dy \\
& \leqslant \int_{\Omega} \left( 1 - \frac{1}{6} \sum_{i} \left(R_{ii}(0) + \sum_{\lvert \alpha \rvert = 1} \partial^{\alpha} R_{ii}(0) y^{\alpha} \right) y_{i}^{2} \right) \lvert \partial_{s} u \rvert^{2} dy \\
& \qquad - \int_{\Omega} \left( \frac{1}{6} \sum_{i \neq j, \lvert \alpha \rvert = 1} \partial^{\alpha} R_{ij}(0) y^{\alpha} y_{i} y_{j} + o(\lvert y \rvert^{3}) \right) \lvert \partial_{s} u \rvert^{2} dy \\
& \leqslant \int_{\Omega} \lvert \partial_{s} u \rvert^{2} dy -\frac{1}{6n}  \int_{\Omega} R_{g}(0) \lvert y \rvert^{2} \lvert \partial_{s} u \rvert^{2} dy + \tilde{C}_{1} \int_{\Omega} \lvert y \rvert^{3} \lvert \partial_{s} u \rvert^{2} dy \\
& : = \lVert \partial_{s} u \rVert_{\calL^{2}(\Omega)}^{2} + A_{1} + A_{2}; \\
\frac{1}{a} \int_{\Omega} (R_{g} - \beta) u^{2} \sqrt{\det(g)} dx & \leqslant \frac{1}{a} \int_{\Omega} \left( R_{g}(0) - \beta \right) \lvert u \rvert^{2} dy + \frac{1}{a} \int_{\Omega} \sum_{i, \lvert \alpha \rvert = 1} \partial^{\alpha} R_{ii}(0) y^{\alpha} \lvert u \rvert^{2} dy \\
& \qquad + \tilde{C}_{2} \int_{\Omega} \lvert y \rvert^{2} \lvert u \rvert^{2} dy + \tilde{C}_{3} \int_{\Omega} \lvert y \rvert^{3} \lvert  u \rvert^{2} dy \\
& : =  \frac{1}{a} \int_{\Omega} \left( R_{g}(0) - \beta \right) \lvert u \rvert^{2} dy + B_{1} + B_{2} +B_{3}; \\
\lVert u \rVert_{\calL^{p}(\Omega, g)}^{p} & = \int_{\Omega} \lvert u \rvert^{p} \sqrt{\det(g)} dx = \int_{\Omega} \lvert u \rvert^{p} dx - \frac{1}{6n} \int_{\Omega} R_{g}(0) \lvert y \rvert^{2} \lvert u \rvert^{p} dy \\
& \qquad + \tilde{C}_{4} \int_{\Omega} \lvert y \rvert^{3} \lvert  u \rvert^{p} dy : = \lVert u \rVert_{\calL^{p}(\Omega)}^{p} + C_{1} + C_{2}.
\end{split}
\end{equation}
Right sides of inequalities in (\ref{APP:eqn2}) are all Euclidean norms and derivatives. We show that when $ \epsilon $ small enough, the test function in (\ref{APP:eqnt0}) satisfies
\begin{align*}
&\frac{\lVert \nabla_{g} u_{\epsilon, \Omega} \rVert_{\calL^{2}(\Omega, g)}^{2} + \frac{1}{a} \int_{\Omega} \left(R_{g} - \beta \right) u_{\epsilon, \Omega}^{2} \sqrt{\det(g)} dx}{\lVert u_{\epsilon, \Omega} \rVert_{\calL^{p}(\Omega, g)}^{2}} \\
& \qquad \leqslant \frac{\lVert \partial_{s} u_{\epsilon, \Omega} \rVert_{\calL^{2}(\Omega)}^{2} + \int_{\Omega} R_{g}(0) u_{\epsilon, \Omega}^{2} dx + A_{1} + A_{2}  + B_{1} + B_{2} + B_{3}}{\left( \lVert u_{\epsilon, \Omega} \rVert_{\mathcal{L}^{p}(\Omega)}^{p}  + C_{1}  + C_{2} \right)^{\frac{2}{p}}} < T.
\end{align*}
We show the inequality (\ref{APP:eqn1}) in three cases, classified by dimensions. Note that by (\ref{APP:eqnt0}), $ u = u_{\epsilon, \Omega} $ is radial, thus we have
\begin{equation*}
\partial_{s} u_{\epsilon, \Omega} = \partial_{s} \left( \frac{\varphi_{\Omega}(s)}{\left( \epsilon + \lvert s \rvert^{2} \right)^{\frac{n-2}{2}}} \right) = \frac{\partial_{s} \varphi_{\Omega}(\lvert y \rvert)}{\left( \epsilon + \lvert y \rvert^{2} \right)^{\frac{n-2}{2}}} - (n - 2) \frac{\varphi_{\Omega}(\lvert y \rvert)  y }{\left( \epsilon + \lvert y \rvert^{2} \right)^{\frac{n}{2}}}.
\end{equation*}
Thus the terms $ \lVert \partial_{s} u \rVert_{\calL^{2}(\Omega)}^{2} $, $ A_{1} $, $ A_{2} $ and $ A_{3} $ can be estimated as
\begin{align*}
\lVert \partial_{s} u \rVert_{\calL^{2}(\Omega)}^{2} & = \int_{\Omega} \frac{\left\lvert \partial_{s} \varphi_{\Omega} \right\rvert^{2}}{(\epsilon + \lvert y \rvert^{2})^{n - 2}} dy - 2(n - 2) \int_{\Omega} \frac{\varphi_{\Omega} \left( \partial_{s} \varphi_{\Omega} \cdot y \right)}{(\epsilon + \lvert y \rvert^{2})^{n - 1}} dy + (n - 2)^{2} \int_{\Omega} \frac{\varphi_{\Omega}^{2} \lvert y \rvert^{2}}{(\epsilon + \lvert y \rvert^{2})^{n}} dy \\
& = (n - 2)^{2} \int_{\R^{n}} \frac{\lvert y \rvert^{2}}{(\epsilon + \lvert y \rvert^{2})^{n}} dy + \int_{\Omega} \frac{\left\lvert \partial_{s} \varphi_{\Omega} \right\rvert^{2}}{(\epsilon + \lvert y \rvert^{2})^{n - 2}} dy - 2(n - 2) \int_{\Omega} \frac{\varphi_{\Omega} \left( \partial_{s} \varphi_{\Omega} \cdot y \right)}{(\epsilon + \lvert y \rvert^{2})^{n - 1}} dy \\
& \qquad + (n - 2)^{2}  \int_{\Omega} \frac{\left(\varphi_{\Omega}^{2} - 1\right) \lvert y \rvert^{2}}{(\epsilon + \lvert y \rvert^{2})^{n}} dy - (n - 2)^{2} \int_{\R^{n} \backslash \Omega} \frac{\lvert y \rvert^{2}}{(\epsilon + \lvert y \rvert^{2})^{n}} dy \\
& : = (n - 2)^{2} \int_{\R^{n}} \frac{\lvert y \rvert^{2}}{(\epsilon + \lvert y \rvert^{2})^{n}} dy + A_{0, 1} + A_{0, 2} + A_{0, 3} + A_{0, 4}; \\
A_{1} & = -\frac{R_{g}(0)}{6n} \int_{\Omega} \frac{\lvert y \rvert^{2} \left\lvert \partial_{s} \varphi_{\Omega} \right\rvert^{2}}{(\epsilon + \lvert y \rvert^{2})^{n - 2}} dy + \frac{R_{g}(0)(n - 2)}{3n} \int_{\Omega} \frac{\varphi_{\Omega} \lvert y \rvert^{2} \left( \partial_{s} \varphi_{\Omega} \cdot y \right)}{(\epsilon + \lvert y \rvert^{2})^{n - 1}} dy \\
& \qquad -\frac{R_{g}(0)(n - 2)^{2}}{6n} \int_{\Omega} \frac{\varphi_{\Omega}^{2} \lvert y \rvert^{4}}{(\epsilon + \lvert y \rvert^{2})^{n}} dy \\
& : = A_{1, 1} + A_{1, 2} + A_{1, 3}; \\
A_{2} & = \tilde{C}_{1} \int_{\Omega} \lvert y \rvert^{3} \lvert \partial_{s} u_{\epsilon, \Omega} \rvert^{2} dy = \tilde{C}_{1} \int_{\Omega} \frac{\lvert y \rvert^{3} \left\lvert \partial_{s} \varphi_{\Omega} \right\rvert^{2}}{(\epsilon + \lvert y \rvert^{2})^{n - 2}} dy -2 \tilde{C}_{1} (n - 2) \int_{\Omega} \frac{\varphi_{\Omega} \lvert y \rvert^{3} \left( \partial_{s} \varphi_{\Omega} \cdot y \right)}{(\epsilon + \lvert y \rvert^{2})^{n - 1}} dy \\
& \qquad + \tilde{C}_{1} (n - 2)^{2} \int_{\Omega} \frac{\varphi_{\Omega}^{2} \lvert y \rvert^{5}}{(\epsilon + \lvert y \rvert^{2})^{n}} dy \\
& : = A_{2, 1} + A_{2, 2} + A_{2, 3}.
\end{align*}
The other two terms above can be estimated as
\begin{align*}
\frac{1}{a} \int_{\Omega} \left( R_{g}(0) - \beta \right) u_{\epsilon, \Omega}^{2} dx & = \frac{1}{a} \int_{\Omega} \left( R_{g}(0) - \beta \right) \frac{\varphi_{\Omega}^{2}}{(\epsilon + \lvert y \rvert^{2})^{n - 2}} dy =  \frac{1}{a} \int_{\R^{n}} \left( R_{g}(0) - \beta \right) \frac{1}{(\epsilon + \lvert y \rvert^{2})^{n - 2}} dy \\
& \qquad -  \frac{1}{a}\int_{\R^{n} \backslash \Omega} \left( R_{g}(0) - \beta \right) \frac{1}{(\epsilon + \lvert y \rvert^{2})^{n - 2}} dy \\
& \qquad \qquad +  \frac{1}{a} \left( R_{g}(0) - \beta \right) \int_{\Omega} \frac{\varphi_{\Omega}^{2} - 1}{(\epsilon + \lvert y \rvert^{2})^{n - 2}} dy \\
& : =  \frac{1}{a} \int_{\R^{n}} \left( R_{g}(0) - \beta \right) \frac{1}{(\epsilon + \lvert y \rvert^{2})^{n - 2}} dy + B_{0, 1} + B_{0, 2}; \\
 \lVert u_{\epsilon, \Omega} \rVert_{\mathcal{L}^{p}(\Omega)}^{p} & = \int_{\Omega} \frac{\varphi_{\Omega}^{p}}{(\epsilon + \lvert y \rvert^{2})^{n}} dy = \int_{\R^{n}} \frac{1}{{(\epsilon + \lvert y \rvert^{2})^{n}} dy} - \int_{\R^{n} \backslash \Omega} \frac{1}{(\epsilon + \lvert y \rvert^{2})^{n}} dy \\
 & \qquad  + \int_{\Omega} \frac{ \varphi_{\Omega}^{p} - 1}{(\epsilon + \lvert y \rvert^{2})^{n}} dy \\
 & : = \int_{\R^{n}} \frac{1}{{(\epsilon + \lvert y \rvert^{2})^{n}} dy} + C_{0, 1} + C_{0, 2}.
\end{align*}
\medskip

\noindent {\bf Case I: $ n \geqslant 5 $.} Fix $ r $ to be small enough, independent of the choice of $ \epsilon $. Claim that
\begin{equation}\label{APP:eqn4}
\begin{split}
& \lVert \partial_{s} u_{\epsilon, \Omega} \rVert_{\calL^{2}(\Omega)}^{2} +  \frac{1}{a} \int_{\Omega} R_{g}(0) u_{\epsilon, \Omega}^{2} dx + A_{1} + A_{2} + B_{1} + B_{2} + B_{3} \\
& \qquad < (n - 2)^{2} \int_{\R^{n}} \frac{\lvert y \rvert^{2}}{(\epsilon + \lvert y \rvert^{2})^{n}} dy - \frac{(n -2)\beta}{4(n - 1)} \epsilon^{\frac{4 - n}{2}} \int_{\R^{n}} \frac{1}{(1 + \lvert y \rvert^{2})^{n - 2}} dy + O\left(\epsilon^{\frac{5 - n}{2}} \right) \\
& \qquad \qquad - \epsilon^{\frac{4 - n}{2}} \frac{R_{g}(0) (n - 2)}{6} \frac{n-2}{n} \int_{B_{P}\left(\frac{r}{2} \epsilon^{-\frac{1}{2}} \right)} \frac{\lvert y \rvert^{2}}{( 1 + \lvert y \rvert^{2})^{n}} dy \\
& =  (n - 2)^{2} \int_{\R^{n}} \frac{\lvert y \rvert^{2}}{(\epsilon + \lvert y \rvert^{2})^{n}} dy - \frac{(n - 2)\beta}{4(n - 1)} \epsilon^{\frac{4 - n}{2}} \int_{\R^{n}} \frac{1}{(1 + \lvert y \rvert^{2})^{n - 2}} dy + O\left(\epsilon^{\frac{5 - n}{2}} \right) + \Sigma_{2}; \\
& \lVert u_{\epsilon, \Omega} \rVert_{\mathcal{L}^{p}(\Omega)}^{p} + C_{1} + C_{2}  = \int_{\R^{n}} \frac{1}{(\epsilon + \lvert y \rvert^{2})^{n}} dy + O\left(\epsilon^{\frac{3 - n}{2}} \right) +  \Sigma_{1}.
\end{split}
\end{equation}
The term $ \Sigma_{1} $ in the last equation will be given later. Note $ \beta < 0 $ is some fixed constant, independent of $ \epsilon $.

Let's estimate the rest $ B_{i}, C_{i}, A_{i, j}, B_{i,j}, C_{i,j} $ except the crucial term $ A_{1, 3} $. Let $ \Omega' = B_{0}\left( r \epsilon^{-\frac{1}{2}} \right) $. Since $ \varphi_{\Omega} \equiv 1 $ on $ B_{0}\left(\frac{r}{2} \right) $, $ \lvert \partial_{s} \varphi_{\Omega} \rvert \leqslant \frac{L}{r} $ for some fixed constant $ L $, we observe that
\begin{align*}
A_{0, 1} & = O(1), A_{0, 2} = O(1), A_{0,3} \leqslant 0, A_{0, 4} \leqslant 0; \\
A_{1, 1} & =  O(1), \lvert A_{1, 2} \rvert =  O(1), A_{2, 1} =  O(1), \lvert A_{2,2} \rvert =  O(1); \\  
A_{2, 3} & = \tilde{C}_{1} (n - 2)^{2} \int_{\Omega} \frac{\varphi_{\Omega}^{2} \lvert y \rvert^{5}}{(\epsilon + \lvert y \rvert^{2})^{n}} dy = \epsilon^{\frac{5 - n}{2}} \tilde{C}_{1} (n - 2)^{2} \int_{\Omega'} \frac{\varphi_{\Omega}^{2} \lvert y \rvert^{5}}{(1 + \lvert y \rvert^{2})^{n}} dy \leqslant L_{1} \epsilon^{\frac{5 - n}{2}} \\
\Rightarrow & A_{2, 3} = O\left(\epsilon^{\frac{5 - n}{2}} \right); \\
\lvert B_{1} \rvert & \leqslant \left\lvert \frac{1}{a} \int_{\Omega} \sum_{i, \lvert \alpha \rvert = 1} \partial^{\alpha} R_{ii}(0) y^{\alpha} \lvert u \rvert^{2} dy \right\rvert \leqslant L_{2} \int_{\Omega} \frac{\lvert y \rvert}{(\epsilon + \lvert y \rvert^{2})^{n - 2}} dy \\
& \leqslant L_{2} \epsilon^{\frac{5 - n}{2}} \int_{\Omega'} \frac{\lvert y \rvert}{(1 + \lvert y \rvert^{2})^{n - 2}} dy \Rightarrow B_{1} = O\left(\epsilon^{\frac{5 - n}{2}} \right); \\
B_{2} & = \tilde{C}_{2} \int_{\Omega} \frac{\varphi_{\Omega}^{2} \lvert y \rvert^{2}}{(\epsilon + \lvert y \rvert^{2})^{ n - 2}} dy = \epsilon^{\frac{6 - n}{2}} \tilde{C}_{2} \int_{\Omega'} \frac{\varphi_{\Omega}^{2} \lvert y \rvert^{2}}{(1 + \lvert y \rvert^{2})^{ n - 2}} dy \leqslant L_{3} \epsilon^{\frac{6 - n}{2}} \\
\Rightarrow & B_{2} = O\left(\epsilon^{\frac{6 - n}{2}} \right); B_{3} \leqslant O\left(\epsilon^{\frac{7 - n}{2}}\right); \\
B_{0, 1}  & = O(1), B_{0, 2} = O(1); C_{0, 1} = O(1), C_{0, 2} = O(1); \\
C_{1} + C_{2} & = - \frac{1}{6n} \int_{\Omega} R_{g}(0) \lvert y \rvert^{2} \lvert u \rvert^{p} dy + \tilde{C}_{4} \int_{\Omega} \lvert y \rvert^{3} \lvert  u \rvert^{p} dy \\
& = -\frac{R_{g}(0)}{6n} \int_{\Omega} \frac{\lvert y \rvert^{2} \varphi_{\Omega}^{p}}{(\epsilon + \lvert y \rvert^{2})^{n}} dy + O \left( \epsilon^{\frac{3 - n}{2}} \right) \\
& = -\epsilon^{\frac{2 - n}{2}} \frac{R_{g}(0)}{6n} \int_{B_{0}\left(\left( \frac{r}{2} \right) \epsilon^{-\frac{1}{2}} \right)} \frac{\lvert y \rvert^{2}}{(1 + \lvert y \rvert^{2})^{n}} dy + O\left(1 \right) + O \left( \epsilon^{\frac{3 - n}{2}} \right)  \\
& : = \Sigma_{1} + O \left( \epsilon^{\frac{3 - n}{2}} \right).
\end{align*}
Here $ L_{i}, i = 1, \dotso, 3 $ are constants independent of $ r, \epsilon $.

For the crucial terms $ A_{1, 3} $, we show that
\begin{equation}\label{APP:eqn5}
\begin{split}
& A_{1, 3} + \frac{1}{a} \int_{\R^{n}} \left( R_{g}(0) - \beta \right) \frac{1}{(\epsilon + \lvert y \rvert^{2})^{n - 2}} dy \\
& \qquad < -\beta \epsilon^{\frac{4 - n}{2}} \int_{\R^{n}}\frac{1}{( 1 + \lvert y \rvert^{2})^{n - 2}} dy - \epsilon^{\frac{4 - n}{2}} \frac{R_{g}(0) (n - 2)}{6} \cdot \frac{n- 2}{n} \int_{B_{0}\left(\frac{r}{2} \epsilon^{-\frac{1}{2}} \right)} \frac{\lvert y \rvert^{2}}{( 1 + \lvert y \rvert^{2})^{n}} dy \\
& \qquad \qquad + O(1) \\
& \qquad : =  -\frac{(n - 2)\beta}{4(n - 1)} \epsilon^{\frac{4 - n}{2}} \int_{\R^{n}}\frac{1}{( 1 + \lvert y \rvert^{2})^{n - 2}} dy + \Sigma_{2} + O(1).
\end{split}
\end{equation}
The term $ A_{1, 3} $ can be bounded as
\begin{align*}
A_{1, 3} & = -\frac{R_{g}(0) (n - 2)^{2}}{6n} \int_{\Omega} \frac{\varphi_{\Omega}^{2} \lvert y \rvert^{4} }{(\epsilon + \lvert y \rvert^{2})^{n}} dy = - \epsilon^{\frac{4 - n}{2}} \frac{R_{g}(0) (n - 2)^{2}}{6n} \int_{\Omega'} \frac{\varphi_{\Omega}^{2} \lvert y \rvert^{4} }{(1 + \lvert y \rvert^{2})^{n}} dy \\
& = - \epsilon^{\frac{4 - n}{2}} \frac{R_{g}(0) (n - 2)^{2}}{6n} \int_{\Omega'} \frac{\varphi_{\Omega}^{2} \lvert y \rvert^{2} \left( \lvert y \rvert^{2} - \frac{n}{n - 2} \right) }{(1 + \lvert y \rvert^{2})^{n}} dy - \epsilon^{\frac{4 - n}{2}} \frac{R_{g}(0) (n - 2)^{2}}{6n} \int_{\Omega'} \frac{\varphi_{\Omega}^{2} \lvert y \rvert^{2} \cdot \frac{n}{n - 2} }{(1 + \lvert y \rvert^{2})^{n}} dy \\
& \leqslant - \epsilon^{\frac{4 - n}{2}} \frac{R_{g}(0)}{12n} \int_{\R^{n}} \lvert y \rvert^{2} \cdot \frac{2(n - 2)^{2} \left( \lvert y \rvert^{2} - \frac{n}{n - 2} \right) }{(1 + \lvert y \rvert^{2})^{n}} dy \\
& \qquad - \epsilon^{\frac{4 - n}{2}} \frac{R_{g}(0) (n - 2)^{2}}{6n} \int_{\Omega'} \frac{\varphi_{\Omega}^{2} \lvert y \rvert^{2} \cdot \frac{n}{n - 2} }{(1 + \lvert y \rvert^{2})^{n}} dy  : = \Gamma_{1} + \Gamma_{2}.
\end{align*}
The above inequality holds since the radius of $ \Omega' $ is much larger than $ 1 $, which follows that $  \lvert y \rvert^{2} - \frac{n}{n - 2} > 0 $ for $ y \in \Omega'^{c} $. It is direct to check that for Euclidean Laplacian $ -\Delta $, the function $ \frac{1}{(1 + \lvert y \rvert^{2})^{n - 2}} $ satisfies
\begin{equation}\label{APP:EXT1}
\Delta \left(  \frac{1}{(1 + \lvert y \rvert^{2})^{n - 2}} \right) = \frac{2(n - 2)^{2} \left( \lvert y \rvert^{2} - \frac{n}{n - 2} \right) }{(1 + \lvert y \rvert^{2})^{n}}.
\end{equation}
Using this, we estimate $ \Gamma_{1} $ as
\begin{align*}
\Gamma_{1} & = - \epsilon^{\frac{4 - n}{2}} \frac{R_{g}(0)}{12n} \int_{\R^{n}} \lvert y \rvert^{2} \cdot \frac{2(n - 2)^{2} \left( \lvert y \rvert^{2} - \frac{n}{n - 2} \right) }{(1 + \lvert y \rvert^{2})^{n}} dy \\
& = - \epsilon^{\frac{4 - n}{2}} \frac{R_{g}(0)}{12n} \int_{\R^{n}} \lvert y \rvert^{2} \cdot \Delta \left( \frac{1}{(1 + \lvert y \rvert^{2})^{n - 2}} \right) dy = - \epsilon^{\frac{4 - n}{2}} \frac{R_{g}(0)}{12n} \int_{\R^{n}} \left( \Delta \lvert y \rvert^{2} \right) \cdot  \frac{1}{(1 + \lvert y \rvert^{2})^{n - 2}} dy \\
& = - \epsilon^{\frac{4 - n}{2}} \frac{R_{g}(0)}{6} \int_{\R^{n}} \frac{1}{(1 + \lvert y \rvert^{2})^{n - 2}} dy.
\end{align*}
For $ \Gamma_{2} $, we see that
\begin{align*}
\Gamma_{2} & = - \epsilon^{\frac{4 - n}{2}} \frac{R_{g}(0) (n - 2)^{2}}{6n} \int_{\Omega'} \frac{\varphi_{\Omega}^{2} \lvert y \rvert^{2} \cdot \frac{n}{n - 2} }{(1 + \lvert y \rvert^{2})^{n}} dy \\
& = - \epsilon^{\frac{4 - n}{2}} \frac{R_{g}(0) (n - 2)}{6} \int_{B_{0}\left(\frac{r}{2} \epsilon^{-\frac{1}{2}} \right)} \frac{\lvert y \rvert^{2}}{( 1 + \lvert y \rvert^{2})^{n}} dy - \epsilon^{\frac{4 - n}{2}} \frac{R_{g}(0) (n - 2)}{6} \int_{\Omega' \backslash B_{0}\left(\frac{r}{2} \epsilon^{-\frac{1}{2}} \right)} \frac{\varphi_{\Omega}^{2} \lvert y \rvert^{2}}{( 1 + \lvert y \rvert^{2})^{n}} dy \\
& = - \epsilon^{\frac{4 - n}{2}} \frac{R_{g}(0) (n - 2)}{6} \cdot \frac{n-2}{n} \int_{B_{0}\left(\frac{r}{2} \epsilon^{-\frac{1}{2}} \right)} \frac{\lvert y \rvert^{2}}{( 1 + \lvert y \rvert^{2})^{n}} dy \\
& \qquad - \epsilon^{\frac{4 - n}{2}} \frac{R_{g}(0) (n - 2)}{6} \cdot \frac{2}{n} \int_{B_{0}\left(\frac{r}{2} \epsilon^{-\frac{1}{2}} \right)} \frac{\lvert y \rvert^{2}}{( 1 + \lvert y \rvert^{2})^{n}} dy+ O\left(1 \right) \\
& < - \epsilon^{\frac{4 - n}{2}} \frac{R_{g}(0) (n - 2)}{6} \cdot \frac{n-2}{n} \int_{B_{0}\left( \frac{r}{2} \epsilon^{-\frac{1}{2}} \right)} \frac{\lvert y \rvert^{2}}{( 1 + \lvert y \rvert^{2})^{n}} dy \\
& \qquad - \epsilon^{\frac{4 - n}{2}} \frac{R_{g}(0)}{6} \cdot \frac{2(n - 2)}{n} \int_{\R^{n}} \frac{\lvert y \rvert^{2}}{( 1 + \lvert y \rvert^{2})^{n}} dy + O\left(1 \right) \\
& : = \Sigma_{2} + \Gamma_{2, 1} + O\left(1 \right).
\end{align*}
Consider the integration $  \int_{\R^{n}} \frac{\lvert y \rvert^{2}}{( 1 + \lvert y \rvert^{2})^{n}} dy $ in $ \Gamma_{2, 1} $. Recall that $ \omega_{n} $ is the area of the unit $ n - 1 $-sphere. We check with change of variables $ s = \tan (\theta) $ in some middle step that
\begin{align*}
\int_{\R^{n}} \frac{\lvert y \rvert^{2}}{( 1 + \lvert y \rvert^{2})^{n}} dy & = \omega_{n} \int_{0}^{\infty} \frac{s^{n + 1}}{( 1 + s^{2})^{n}} ds = \omega_{n} \int_{0}^{\frac{\pi}{2}} \frac{\tan^{n + 1}(\theta) \sec^{2}(\theta)}{( 1 + \tan^{2}(\theta))^{n}} d\theta \\
& = \omega_{n} \int_{0}^{\frac{\pi}{2}} \sin^{2\left(\frac{n}{2} + 1 \right) - 1} \cos^{2\left(\frac{n}{2} - 1 \right) - 1} d\theta = \omega_{n} B\left(\frac{n}{2} + 1, \frac{n}{2} - 1 \right) \\
& = \omega_{n} \frac{\Gamma\left( \frac{n}{2} + 1 \right) \Gamma \left( \frac{n}{2} - 1 \right)}{\Gamma(n)}.
\end{align*}
Here $ B(a, b) $ is the Beta function with parameters $ a, b $ and $ \Gamma(a) $ is the Gamma function with parameter $ a $. The term below satisfies
\begin{equation*}
\frac{1}{a} \int_{\R^{n}} R_{g}(0) \frac{1}{(\epsilon + \lvert y \rvert^{2})^{n - 2}} dy = \epsilon^{\frac{4 - n}{2}} \frac{1}{a} \cdot R_{g}(0) \int_{\R^{n}} \frac{1}{(1+ \lvert y \rvert^{2})^{n - 2}} dy.
\end{equation*}
Check the integration $ \int_{\R^{n}} \frac{\lvert y \rvert^{2}}{( 1 + \lvert y \rvert^{2})^{n}} dy $ above, we have
\begin{equation*}
\int_{\R^{n}} \frac{\lvert y \rvert^{2}}{( 1 + \lvert y \rvert^{2})^{n}} dy = \omega_{n} B\left( \frac{n}{2}, \frac{n}{2} - 2 \right) =  \omega_{n} \frac{\Gamma\left( \frac{n}{2} \right) \Gamma \left( \frac{n}{2} - 2 \right)}{\Gamma(n - 2)}.
\end{equation*}
By the standard relation $ \Gamma(z + 1) = z \Gamma(z) $ we conclude that
\begin{equation}\label{APP:eqns5}
\int_{\R^{n}} \frac{\lvert y \rvert^{2}}{( 1 + \lvert y \rvert^{2})^{n}} dy = \frac{n(n - 4)}{4(n - 1)(n - 2)} \int_{\R^{n}} \frac{1}{( 1 + \lvert y \rvert^{2})^{n - 2}} dy.
\end{equation}
Note that $ \frac{1}{a} = \frac{n - 2}{4(n - 1)} $. Applying (\ref{APP:eqns5}) as well as the estimates of $ \Gamma_{1} $ and $ \Gamma_{2} $ above, we conclude that
\begin{align*}
& A_{1, 3} + \frac{1}{a} \left( R_{g}(0) - \beta \right) \int_{\R^{n}} \frac{1}{(\epsilon + \lvert y \rvert^{2})^{n - 2}} dy \leqslant \Gamma_{1} + \Gamma_{2} + \epsilon^{\frac{4 - n}{2}} \frac{1}{a} \cdot \left( R_{g}(0) - \beta \right) \int_{\R^{n}} \frac{1}{(1+ \lvert y \rvert^{2})^{n - 2}} dy \\
& \qquad = - \epsilon^{\frac{4 - n}{2}} \frac{R_{g}(0)}{6} \int_{\R^{n}} \frac{1}{(1 + \lvert y \rvert^{2})^{n - 2}} dy + \Sigma_{2} \\
& \qquad \qquad + \frac{n(n - 4)}{4(n - 1)(n - 2)} \cdot \left( - \epsilon^{\frac{4 - n}{2}} \frac{R_{g}(0)}{6} \cdot \frac{2(n - 2)}{n} \right) \int_{\R^{n}} \frac{1}{(1+ \lvert y \rvert^{2})^{n - 2}} dy \\
& \qquad \qquad \qquad + \epsilon^{\frac{4 - n}{2}} \frac{n - 2}{4(n - 1)} \cdot \left( R_{g}(0) - \beta \right) \int_{\R^{n}} \frac{1}{(1+ \lvert y \rvert^{2})^{n - 2}} dy + O(1) \\
& = -\frac{(n - 2)\beta}{4(n - 1)} \epsilon^{\frac{4 - n}{2}} \int_{\R^{n}} \frac{1}{(1+ \lvert y \rvert^{2})^{n - 2}} dy + \Sigma_{2} + O(1).
\end{align*}
It follows that (\ref{APP:eqn5}) holds. Therefore we conclude that the estimates in (\ref{APP:eqn4}) hold by combining all estimates above together. 
Recall $ K_{1}, K_{2} $ defined above, we apply estimates in (\ref{APP:eqn4}), we get
\begin{align*}
Q_{\epsilon, \Omega} & \leqslant \frac{(n - 2)^{2} \int_{\R^{n}} \frac{\lvert y \rvert^{2}}{(\epsilon + \lvert y \rvert^{2})^{n}} dy - \frac{(n - 2)\beta}{4(n - 1)} \epsilon^{\frac{4 - n}{2}} \int_{\R^{n}} \frac{1}{(1+ \lvert y \rvert^{2})^{n - 2}} dy + O\left(\epsilon^{\frac{5 - n}{2}} \right) + \Sigma_{2}}{\left( \int_{\R^{n}} \frac{1}{(\epsilon + \lvert y \rvert^{2})^{n}} dy + O \left( \epsilon^{\frac{3 - n}{2}} \right) + \Sigma_{1} \right)^{\frac{2}{p}} } \\
& = \frac{(n - 2)^{2} \epsilon^{\frac{2 - n}{2}} \int_{\R^{n}} \frac{\lvert y \rvert^{2}}{(1 + \lvert y \rvert^{2})^{n}} dy - \frac{(n - 2)\beta}{4(n - 1)} \epsilon^{\frac{4 - n}{2}} \int_{\R^{n}} \frac{1}{(1+ \lvert y \rvert^{2})^{n - 2}} dy + O\left(\epsilon^{\frac{5 - n}{2}} \right) + \Sigma_{2} }{\left( \epsilon^{\frac{-n}{2}} \int_{\R^{n}} \frac{1}{(1 + \lvert y \rvert^{2})^{n}} dy + O \left( \epsilon^{\frac{3 - n}{2}} \right) + \Sigma_{1} \right)^{\frac{2}{p}}} \\
& = \frac{K_{1} - \epsilon \frac{(n - 2)\beta}{4(n - 1)} K_{3} + O\left(\epsilon^{\frac{3}{2}}\right) + \epsilon^{\frac{n - 2}{2}} \Sigma_{2}}{K_{2} + O \left( \epsilon^{\frac{3}{2}} \right) + \frac{n - 2}{n} \epsilon^{\frac{n}{2}} \Sigma_{1}K_{2}^{-\frac{2}{n - 2}}}.
\end{align*}
The last term above is due to the Taylor expansion of the function $ f(x) = x^{\frac{2}{p}} $. To control the last terms in numerator and denominator above, respectively, we first recall that the ratio $ \frac{K_{1}}{K_{2}} $ is the square of the reciprocal of the best Sobolev constant of
\begin{equation*}
\lVert u \rVert_{\calL_{p}(\R^{n})} \leqslant E \lVert Du \rVert_{\calL^{2}(\R^{n})} \Rightarrow \inf_{E} E^{-2} = T = \frac{K_{1}}{K_{2}}.
\end{equation*}
Aubin and Talenti \cite{Aubin} showed that
\begin{equation*}
T^{-\frac{1}{2}} = \pi^{-\frac{1}{2}} n^{-\frac{1}{2}} \left( \frac{1}{n - 2} \right)^{\frac{1}{2}} \left( \frac{\Gamma(n)}{\Gamma\left(\frac{n}{2} \right)} \right)^{\frac{1}{n}}.
\end{equation*}

Recall that $ \beta > 0 $, independent of $ \epsilon $, thus $ Q_{\epsilon, \Omega} $ can be estimated as
\begin{align*}
Q_{\epsilon, \Omega} & \leqslant \frac{K_{1} - \epsilon \frac{(n - 2)\beta}{4(n - 1)} K_{3} + O\left(\epsilon^{\frac{3}{2}}\right) + \epsilon^{\frac{n - 2}{2}} \Sigma_{2}}{K_{2} + O \left( \epsilon^{\frac{3}{2}} \right) + \frac{n - 2}{n} \epsilon^{\frac{n}{2}} \Sigma_{1}K_{2}^{-\frac{2}{n - 2}}} \\
& \leqslant  \frac{K_{1} + \epsilon^{\frac{n - 2}{2}} \Sigma_{2}}{K_{2} + \frac{n - 2}{n} \epsilon^{\frac{n}{2}} \Sigma_{1}K_{2}^{-\frac{2}{n - 2}}} + O\left(\epsilon^{\frac{3}{2}} \right) - \epsilon \cdot \frac{(n - 2)\lvert \beta \rvert}{4(n - 1)} \frac{K_{3}}{K_{2}}.
\end{align*}
The last inequality is due to the fact that both $ \epsilon^{\frac{n}{2}} \Sigma_{1}K_{2}^{-\frac{2}{n -2 }} $ and $ \epsilon^{\frac{n - 2}{2}} \Sigma_{2} $ are bounded above and below by constant multiples of $ \epsilon $. Lastly we must show that
\begin{equation}\label{APP:eqn6a}
\frac{K_{1} + \epsilon^{\frac{n - 2}{2}} \Sigma_{2}}{K_{2} + \frac{n - 2}{n} \epsilon^{\frac{n}{2}} \Sigma_{1}K_{2}^{-\frac{2}{n - 2}}} \leqslant \frac{K_{1}}{K_{2}}.
\end{equation}
By the best Sobolev constant, we observe that
\begin{equation*}
\frac{K_{1}}{K_{2}} = T \Rightarrow K_{1} = K_{2} T.
\end{equation*}
Thus (\ref{APP:eqn6a}) is equivalent to
\begin{align*}
& \frac{K_{1} + \epsilon^{\frac{n - 2}{2}} \Sigma_{2}}{K_{2} + \frac{n - 2}{n} \epsilon^{\frac{n}{2}} \Sigma_{1}K_{2}^{-\frac{2}{n}}} \leqslant \frac{K_{1}}{K_{2}} \Leftrightarrow \frac{K_{1} + \epsilon^{\frac{n - 2}{2}} \Sigma_{2}}{K_{2} + \frac{n - 2}{n} \epsilon^{\frac{n}{2}} \Sigma_{1}K_{2}^{-\frac{2}{n - 2}}} - \frac{K_{1}}{K_{2}} \leqslant 0 \\
& \frac{K_{2}^{2} T + \epsilon^{\frac{n - 2}{2}} \Sigma_{2} K_{2} - K_{2}^{2} T - \frac{n - 2}{n} K_{2} T \epsilon^{\frac{n}{2}} \Sigma_{1}K_{2}^{-\frac{2}{n - 2}}}{\left( K_{2} + \frac{n - 2}{n} \epsilon^{\frac{n}{2}} \Sigma_{1}K_{2}^{-\frac{2}{n - 2}} \right)K_{2} } \leqslant 0 \\
\Leftrightarrow & \epsilon^{\frac{n - 2}{2}} \Sigma_{2} K_{2}  - \frac{n - 2}{n} K_{2} T \epsilon^{\frac{n}{2}} \Sigma_{1}K_{2}^{-\frac{2}{n - 2}} \leqslant 0 \\
\Leftrightarrow & \epsilon^{\frac{n - 2}{2}} \left( - \epsilon^{\frac{4 - n}{2}} \frac{R_{g}(0) (n - 2)}{6} \frac{n - 2}{n} \int_{B_{P}\left(\frac{r}{2} \epsilon^{-\frac{1}{2}} \right)} \frac{\lvert y \rvert^{2}}{( 1 + \lvert y \rvert^{2})^{n}} dy \right) K_{2} \\
& \qquad  -  \frac{n - 2}{n} K_{2} T \epsilon^{\frac{n}{2}} \left( -\epsilon^{\frac{2 - n}{2}} \frac{R_{g}(0)}{6n} \int_{B_{P}\left(\frac{r}{2} \epsilon^{-\frac{1}{2}} \right)} \frac{\lvert y \rvert^{2}}{(1 + \lvert y \rvert^{2})^{n}} dy \right) K_{2}^{-\frac{2}{n - 2}} \leqslant 0 \\
\Leftrightarrow & (n - 2) - \frac{ T K_{2}^{-\frac{2}{n - 2}}}{n} \leqslant 0.
\end{align*}
Recall that the best Sobolev constant $ T $ is of the form
\begin{equation*}
T = \pi n (n - 2) \left( \frac{\Gamma \left( \frac{n}{2} \right)}{\Gamma(n)} \right)^{\frac{2}{n}}
\end{equation*}
For the term $ K_{2}^{-\frac{2}{n - 2}} $, we have
\begin{equation*}
K_{2}^{-\frac{2}{n - 2}} = \left( \int_{\R^{n}} \frac{1}{(1 + \lvert y \rvert^{2} )^{n}} dy \right)^{-\frac{2}{n}}  = \left( \omega_{n} \int_{0}^{\infty} \frac{r^{n - 1}}{(1 + r^{2})^{n}} dy \right)^{-\frac{2}{n}} : = \left( \omega_{n} D_{1} \right)^{-\frac{2}{n}} 
\end{equation*}
Here $ \omega_{n} $ is the surface area of the $ (n - 1) $-sphere. It is well known that
\begin{equation*}
\omega_{n} = n \cdot \frac{\pi^{\frac{n}{2}}}{\Gamma\left( \frac{n}{2} + 1 \right)} = n \cdot \frac{2\pi^{\frac{n}{2}}}{n\Gamma\left(\frac{n}{2} \right)} = \frac{2\pi^{\frac{n}{2}}}{\Gamma\left(\frac{n}{2} \right)} \Rightarrow \omega_{n}^{-\frac{2}{n}} = \left( \frac{\Gamma \left( \frac{n}{2} \right)}{2\pi^{\frac{n}{2}}} \right)^{\frac{2}{n}} = \left( \Gamma \left(\frac{n}{2} \right) \right)^{\frac{2}{n}} 2^{-\frac{2}{n}} \pi^{-1}.
\end{equation*}
For $ D_{1} $, we use change of variables $ r = \tan \theta $, $ \theta \in [0, \frac{\pi}{2}] $, we have
\begin{align*}
D_{1} & = \int_{0}^{\infty} \frac{r^{n - 1}}{(1 + r^{2})^{n}} dy = \int_{0}^{\frac{\pi}{2}} \frac{\tan^{n - 1}(\theta)}{( 1 + \tan^{2}(\theta))^{n}} \sec^{2}(\theta) d\theta =  \int_{0}^{\frac{\pi}{2}} \frac{\sin^{n - 1}(\theta) \sec^{n + 1}(\theta)}{\sec^{2n}(\theta)} d\theta \\
& = \int_{0}^{\frac{\pi}{2}} \sin^{n - 1}(\theta) \cos^{n - 1}\theta d\theta = 2^{1 - n}  \int_{0}^{\frac{\pi}{2}} \sin^{n - 1}(2\theta) d\theta = 2^{- n} \int_{0}^{\pi} \sin^{n-1}(x) dx.
\end{align*}
We use $ 2\theta = x $ in the last equality. It is well-known that the last integration above is symmetric with respect to $ x = \frac{\pi}{2} $ and is connected to the Gamma function. To be precise,
\begin{align*}
D_{1} & = 2^{- n} \int_{0}^{\pi} \sin^{n-1}(x) dx = 2^{1 - n} \int_{0}^{\frac{\pi}{2}} \sin^{n-1}(x) dx = 2^{1 - n} \frac{\sqrt{\pi} \Gamma \left( \frac{n -1 + 1}{2} \right)}{2 \Gamma \left(\frac{n - 1}{2} + 1 \right)} \\
\Rightarrow D_{1}^{-\frac{2}{n}} & = \left( 2^{n} \pi^{-\frac{1}{2}} \frac{\Gamma \left( \frac{n}{2} + \frac{1}{2} \right)}{ \Gamma \left(\frac{n}{2} \right)} \right)^{\frac{2}{n}} = 2^{2} \pi^{-\frac{1}{n}} \Gamma \left( \frac{n}{2} + \frac{1}{2} \right)^{\frac{2}{n}} \Gamma \left(\frac{n}{2} \right)^{-\frac{2}{n}}.
\end{align*}
Packing terms together, we have
\begin{align*}
\frac{ T K_{2}^{-\frac{2}{n - 2}}}{n} & =  n^{-1} \pi n (n - 2) \left( \frac{\Gamma \left( \frac{n}{2} \right)}{\Gamma(n)} \right)^{\frac{2}{n}} \cdot \left( \Gamma \left(\frac{n}{2} \right) \right)^{\frac{2}{n}} 2^{-\frac{2}{n}} \pi^{-1} \\
& \qquad \cdot 2^{2} \pi^{-\frac{1}{n}} \left( \Gamma \left( \frac{n}{2} + \frac{1}{2} \right) \right)^{\frac{2}{n}} \left( \Gamma \left(\frac{n}{2} \right) \right)^{-\frac{2}{n}} \\
& =  (n - 2) 2^{2 - \frac{2}{n}} \pi^{-\frac{1}{n}} \left( \Gamma \left( \frac{n}{2} + \frac{1}{2} \right)  \Gamma \left(\frac{n}{2} \right)  \right)^{\frac{2}{n}} \left( \Gamma(n) \right)^{-\frac{2}{n}}
\end{align*}
The Legendre duplication formula for Gamma functions says
\begin{equation*}
\Gamma \left(z+ \frac{1}{2} \right) \Gamma \left( z \right) = 2^{1 - 2z} \sqrt{\pi} \Gamma(2z).
\end{equation*}
Take $ z = \frac{n}{2} $ above, we have
\begin{align*}
\frac{ T K_{2}^{-\frac{2}{n - 2}}}{n} & =  (n - 2) 2^{2 - \frac{2}{n}} \pi^{-\frac{1}{n}} \left( \Gamma \left( \frac{n}{2} + \frac{1}{2} \right)  \Gamma \left(\frac{n}{2} \right)  \right)^{\frac{2}{n}} \left( \Gamma(n) \right)^{-\frac{2}{n}} \\
& =  (n - 2) 2^{2 - \frac{2}{n}} \pi^{-\frac{1}{n}} \left( 2^{1 - 2 \frac{n}{2}} \sqrt{\pi} \Gamma(n) \right)^{\frac{2}{n}} \left( \Gamma(n) \right)^{-\frac{2}{n}} \\
& =  (n - 2).
\end{align*}
Hence we conclude that
\begin{equation*}
(n - 2) - \frac{ T K_{2}^{-\frac{2}{n - 2}}}{n} = (n - 2) -  (n - 2) = 0 \Rightarrow \frac{K_{1} + \epsilon^{\frac{n - 2}{2}} \Sigma_{2}}{K_{2} + 
\frac{n - 2}{n}\epsilon^{\frac{n}{2}} \Sigma_{1}K_{2}^{-\frac{2}{n - 2}}} \leqslant \frac{K_{1}}{K_{2}}.
\end{equation*}
Therefore (\ref{APP:eqn6a}) holds. It follows that when $ \epsilon $ is small enough,
\begin{equation*}
Q_{\epsilon, \Omega} \leqslant \frac{K_{1}}{K_{2}} + O\left(\epsilon^{\frac{3}{2}} \right) - \epsilon \cdot \frac{(n - 2)\lvert \beta \rvert}{4(n - 1)} \frac{K_{3}}{K_{2}} < \frac{K_{1}}{K_{2}} = T.
\end{equation*}
Thus the inequality in (\ref{APP:eqn1}) holds when $ n \geqslant 5 $.
\medskip

\noindent {\bf Case II: $ n = 4 $.} In this case, we keep notations to be the same as $ n \geqslant 5 $ cases. Again we fix $ r $. Most estimates of $ A_{i,j}, B_{i}, B_{i,j}, C_{i}, C_{i, j} $ with $ n = 4 $ are the same as above:
\begin{align*}
A_{0, 1} & = O(1), A_{0, 2} = O(1), A_{0,3} \leqslant 0, A_{0, 4} \leqslant 0; \\
A_{1, 1} & =  O(1), \lvert A_{1, 2} \rvert =  O(1), A_{2, 1} =  O(1), \lvert A_{2,2} \rvert =  O(1); A_{2, 3} = O\left(\epsilon^{\frac{1}{2}} \right); \\
\lvert B_{1} \rvert & = O\left(\epsilon^{\frac{1}{2}} \right); B_{2} = O\left(\epsilon \right); B_{3} = O\left(\epsilon^{\frac{3}{2}}\right); \\
C_{0, 1} & = O(1), C_{0, 2} = O(1); C_{1} \geqslant 0, C_{2} = O\left(\epsilon^{-\frac{1}{2}} \right).
\end{align*}
Here $ C_{1} > 0 $ due to the fact that $ R_{g}(0) < 0 $. We do not need $ B_{0, 1} $ and $ B_{0, 2} $ since the key term
\begin{equation*}
\frac{1}{a} \cdot \left(R_{g}(0) - \beta \right) \int_{\Omega} \frac{\varphi_{\Omega}^{2}}{(\epsilon + \lvert y \rvert)^{2})^{2}} dy
\end{equation*}
is not integrable over $ \mathbb{R}^{4} $. Note that the term $ A_{1, 3} $ with $ n = 4 $ can be estimated as
\begin{align*}
A_{1, 3} & = -\frac{R_{g}(0) (n - 2)^{2}}{6n} \int_{\Omega} \frac{\varphi_{\Omega}^{2} \lvert y \rvert^{4} }{(\epsilon + \lvert y \rvert^{2})^{n}} dy = -\frac{R_{g}(0)}{6} \int_{\Omega} \frac{\varphi_{\Omega}^{2} \lvert y \rvert^{4} }{(\epsilon + \lvert y \rvert^{2})^{4}} dy \\
& \leqslant -\frac{R_{g}(0)}{6} \int_{\Omega} \frac{\varphi_{\Omega}^{2} }{(\epsilon + \lvert y \rvert^{2})^{2}} dy
\end{align*}
since the ratio $ \frac{\lvert y \rvert^{2}}{\epsilon + \lvert y \rvert^{2}} < 1 $ as always. Thus with $ a = \frac{n - 2}{4(n - 1)} = \frac{1}{6} $ when $ n = 4 $, we can see
\begin{equation*}
A_{1, 3} + \frac{1}{a} \cdot \left(R_{g}(0) - \beta \right) \int_{\Omega} \frac{\varphi_{\Omega}^{2}}{(\epsilon + \lvert y \rvert)^{2})^{2}} dy \leqslant -\frac{\beta}{6}  \int_{\Omega} \frac{\varphi_{\Omega}^{2}}{(\epsilon + \lvert y \rvert)^{2})^{2}} dy.
\end{equation*}
Applying exactly the same calculation as in \cite[Lemma.~1.1]{Niren3}, we conclude that
\begin{equation*}
\int_{\Omega} \frac{\varphi_{\Omega}^{2}}{(\epsilon + \lvert y \rvert)^{2})^{2}} dy \geqslant L_{5} \lvert \log \epsilon \rvert
\end{equation*}
with some positive constant $ L_{5} $ independent of $ \epsilon $. It follows that
\begin{equation}\label{APP:eqn7}
A_{1, 3} + \frac{1}{a} \cdot \left(R_{g}(0) - \beta \right) \int_{\Omega} \frac{\varphi_{\Omega}^{2}}{(\epsilon + \lvert y \rvert)^{2})^{2}} dy \leqslant -L_{5} \lvert \beta \rvert \lvert \log \epsilon \rvert.
\end{equation}
Applying (\ref{APP:eqn7}) and all other estimates above, we have
\begin{align*}
Q_{\epsilon, \Omega} & \leqslant \frac{4 \int_{\R^{n}} \frac{\lvert y \rvert^{2}}{(\epsilon + \lvert y \rvert^{2})^{4}} dy -L_{5} \lvert \beta \rvert \lvert \log \epsilon \rvert  + O(1) + O\left(\epsilon^{\frac{1}{2}} \right)}{\left(\int_{\R^{n}} \frac{1}{(\epsilon + \lvert y \rvert^{2})^{4}} dy + O(1) + O\left(\epsilon^{-\frac{1}{2}} \right) \right)^{\frac{1}{2}}} \\
& \leqslant \frac{4\epsilon^{-1} \int_{\R^{n}} \frac{\lvert y \rvert^{2}}{(1 + \lvert y \rvert^{2})^{2}} dy -L_{5} \lvert \beta \rvert \lvert \log \epsilon \rvert + O\left(1 \right)}{\left( \epsilon^{-2} \int_{\R^{n}} \frac{1}{(1 + \lvert y \rvert^{2})^{n}} dy + O\left(\epsilon^{-\frac{1}{2}} \right) \right)^{\frac{1}{2}}} \\
& \leqslant \frac{K_{1} -L_{5} \lvert \beta \rvert \epsilon \lvert \log \epsilon \rvert + O\left(\epsilon \right)}{K_{2} + O\left(\epsilon^{\frac{3}{2}} \right)} \\
& = \frac{K_{1}}{K_{2}} + O\left(\epsilon \right) - \frac{L_{5} \lvert \beta \rvert}{K_{2}} \epsilon \lvert \log \epsilon \rvert.
\end{align*}
It follows that when $ \epsilon $ small enough, we have
\begin{equation*}
Q_{\epsilon, \Omega} < T.
\end{equation*}
Thus (\ref{APP:eqn1}) holds for $ n = 4 $.
\end{proof}
\medskip


\bibliographystyle{plain}
\bibliography{YamabessLocalCritical}

\end{document}